\DeclareMathOperator{\hess}{hess}
\newcommand{\BB}{\mathscr B} 
\newcommand{\CC}{\mathbb{C}}
\newcommand{\Gmax}{G^{\mathrm{max}}}
\newcommand{\HH}{\mathscr A} 
\newcommand{\II}{\mathscr I} 
\newcommand{\MM}{\overline{\mathscr{M}}}
\newcommand{\M}{A} 
\newcommand{\QQQ}{\mathscr Q}
\newcommand{\QQ}{\mathbb Q}
\newcommand{\SL}{\operatorname{S{\kern-.05em}L}}
\newcommand{\W}{\overline{\mathscr{W}}}
\newcommand{\X}{\mathbf X}
\newcommand{\Y}{\mathbf Y}
\newcommand{\ZZ}{\mathbb Z}
\newcommand{\adm}{a}%
\newcommand{\bTheta}{\boldsymbol \Theta}
\newcommand{\ba}{\mathbf a}
\newcommand{\expon}{\mathbf{h}_W}
\newcommand{\expontr}{\mathbf{h}_{W\tr}}
\newcommand{\bbeta}{\boldsymbol \beta}
\newcommand{\bb}{\mathbf b}
\newcommand{\bgamma}{\boldsymbol \gamma}
\newcommand{\bg}{\mathbf g}
\newcommand{\bhbar}[1]{{\bh}|_{\fix(g_{#1})}}
\newcommand{\bh}{\mathbf h}
\newcommand{\bn}{\mathbf n}
\newcommand{\brr}{\mathbf r}
\newcommand{\bstar}{\star_{B}}
\newcommand{\bs}{\mathbf s}
\newcommand{\bt}{\mathbf t}
\newcommand{\bv}{\mathbf v}
\newcommand{\bwa}{\mathbf w} 
\newcommand{\bwb}{\bwa'} 
\newcommand{\dsand}{{\quad \text{ and }\quad}}
\newcommand{\dseo}[2]{\sum_{#2} #1}
\newcommand{\dse}[1]{\sum #1}
\newcommand{\fix}{\operatorname{Fix}}
\newcommand{\kn}[2]{ \left\lceil #1 \:; #2 \right\rfloor }
\newcommand{\kna}[2]{{\kn{#1}{#2}}^A}
\newcommand{\knb}[2]{{\kn{#1}{#2}}^B}
\newcommand{\mmap}{\varphi} 
\newcommand{\one}{\mathbf 1}
\newcommand{\pair}[2]{\left< #1,\; #2 \right>}
\newcommand{\rca}{k} 
\newcommand{\rcb}{\ell} 
\newcommand{\ringid}{\mathbbm{1}}
\newcommand{\rr}{r} 
\newcommand{\st}{\operatorname{st}}
\newcommand{\stara}{\star_A}
\newcommand{\starb}{\star_B}
\newcommand{\tr}{^{\mathsf T}}
\newcommand{\uBB}{\uHH} 
\newcommand{\uHH}{\mathscr{H}} 
\newcommand{\wa}{w} 
\newcommand{\wb}{w'} 
\providecommand*{\lemautorefname}{Lemma}
\providecommand*{\remarkautorefname}{Remark}
\providecommand*{\corautorefname}{Corollary}
\providecommand*{\axautorefname}{Axiom}
\providecommand*{\propertyautorefname}{Property}
\providecommand*{\theoremautorefname}{Theorem}
\providecommand*{\notatautorefname}{Notation}
\providecommand*{\defnautorefname}{Definition}
\providecommand*{\propositionautorefname}{Proposition}
\providecommand*{\exampleautorefname}{Example}
\theoremstyle{plain}
\newtheorem{lem}{\lemautorefname}[subsection]
\newaliascnt{theorem}{lem}
\newtheorem{theorem}[theorem]{\theoremautorefname}
\newaliascnt{cor}{lem}
\newtheorem{cor}[cor]{\corautorefname}
\newtheorem{property}{\propertyautorefname}
\newaliascnt{proposition}{lem}
\newtheorem{proposition}[proposition]{\propositionautorefname}
\newtheorem*{main}{Theorem \ref{mainthm}}
\newtheorem{ax}{\axautorefname}
\theoremstyle{definition}
\newaliascnt{defn}{lem}
\newtheorem{defn}[defn]{\defnautorefname}
\newaliascnt{example}{lem}
\theoremstyle{remark}
\newaliascnt{remark}{lem}
\newtheorem{remark}[remark]{\remarkautorefname}
\newaliascnt{notat}{lem}
\newtheorem{notat}[notat]{\notatautorefname}
\author{Amanda Francis, Tyler Jarvis, Drew Johnson, and Rachel Suggs}
\title[Landau-Ginzburg Mirror Symmetry]{Landau-Ginzburg Mirror Symmetry for Orbifolded Frobenius Algebras}
\date{\today}
\subjclass[2000]{Primary: 14B05, 32S25, 14J81, 57R56. Secondary: 14J81, 14N35, 53D45, 32S40, 32S55}
\begin{document}

\begin{abstract}
We prove the Landau-Ginzburg Mirror Symmetry Conjecture at the level of (orbifolded) Frobenius algebras for a large class of invertible singularities, including arbitrary sums of loops and Fermats with arbitrary symmetry groups.
Specifically, we show that for a quasi-homogeneous polynomial $W$ and an admissible group $G$ within the class,  the Frobenius algebra arising in the FJRW theory \cite{fjr1} of $[W/G]$ is isomorphic (as a Frobenius algebra) to the orbifolded Milnor ring \cite{krawitz, kau1, IV} of $[W^T/G^T]$, associated to the dual polynomial $W^T$ and dual group $G^T$.
\end{abstract}
\maketitle
\vspace*{-6ex}
\section{Introduction}
\subsection{Background}
The Landau-Ginzburg mirror symmetry conjecture says that the A-model of a given singularity with a  given symmetry group should match the B-model of an appropriately chosen  \emph{transpose singularity} and \emph{transpose group}.  The full conjecture is that there should be cohomological field theories for both the A- and B-models and that these should match in all genera.

The A-model has been constructed by Fan, Jarvis, and Ruan \cite{fjr2, fjr3, fjr1}, following ideas of Witten.  For any non-degenerate, quasi-homogenous (weighted homogeneous) polynomial $W$, and for any admissible group of diagonal symmetries $G$, they constructed a cohomological field theory which is often  called \emph{FJRW theory}.  A restriction of this theory to genus zero with three marked points gives a Frobenius algebra for the A-model which we denote $\HH_{W,G}$.

For the B-model, in the case that the orbifold group is trivial, the Frobenius algebra is given by the Milnor ring (or local algebra) of the singularity.  For cases orbifolded by non-trivial groups, the orbifold B-model, as a vector space, was given by Intriligator and Vafa \cite{IV}; however, until more recently the orbifold B-model was lacking a definition of the product structure.  In \cite{krawitz}, Krawitz, following ideas of Kaufmann \cite{ kau2, kau3, kau1}, wrote down a multiplication for the orbifold Milnor ring, which we call $\BB_{W,G}$.

The construction of the mirror dual of a given invertible polynomial was first described by Berglund-H\"ubsch \cite{ber-hub}.  It is defined for  so-called \emph{invertible singularities}, that is, polynomials with the same number of monomial as variables which define an isolated singularity at the origin.    The general construction of the dual group and the general definition of the mirror map (at the level of vector spaces) is due to Krawitz \cite{krawitz}, based in part on ideas from \cite{kreuzer}.

Krawitz proved that the LG Mirror Symmetry Conjecture holds on the level of bi-graded vector spaces for all invertible polynomials and all admissible groups.  At the level of Frobenius algebras, he only proved the conjecture for the case when the B-model has trivial orbifold group.  This generalized some previous work of \cite{priddis}, \cite{acosta}, and \cite{fan-shen}.  A few special cases of the conjecture with non-trivial orbifolded B-model have also been verified \cite{krawitz,  bergin}. 

This paper proves the LG Mirror Symmetry Conjecture for arbitrary admissible groups of symmetries.  We show that for a wide class of polynomials and admissible orbifold groups, the A-model is isomorphic, as a Frobenius algebra, to the orbifold B-model of the dual singularity and dual group.  The precise conditions on the polynomials and groups is given in \autoref{property}.  In terms of the classification \cite{KrSk} of invertible polynomials into Fermats, chains, and loops, our result applies many cases, including arbitrary sums of Fermats and loops, with arbitrary choice of admissible group.   

The product structure of the FJRW theory is determined by the genus-zero, three-point correlators, which are $\CC$-valued functions on $\HH_{W,G}^{\otimes 3}$.  The FJRW theory satisfies the axioms of a cohomological field theory as well as several additional axioms that facilitate computation.
 When orbifolding by a trivial symmetry group on the B-side, and thus by the maximal group on the A-side, the relevant insertions are mostly what we call \emph{narrow} \cite{fjr1}. In these cases, the axioms of \cite{fjr1} provide straightforward ways to compute the correlators.

When we orbifold by non-trivial symmetry groups on the B-side  (and thus smaller groups on the A-side) the situation is more difficult, more of the insertions may be \emph{broad}, and are much more difficult to compute.   The general case of the problem is a difficult PDE-problem, not yet explicitly solved in most cases.

Orbifolding by a smaller group on the A-side also introduces another difficulty.   When the orbifold group is a (decoupled) product of groups acting on these sums, as in the case of the maximal symmetry group, the FJRW ring is the tensor product of the pieces, and the B-model can similarly be broken up as a tensor product (see \autoref{ax:sums} and \autoref{prop:b-sums}).  Thus, previous attention has been focused on these atomic types.  However, if we consider more arbitrary groups are not necessarily decoupled products of groups, this method does not apply directly. This paper introduces a new strategy to take advantage of the ``breaking up into tensor products'' technique for more general orbifold groups.  This also allows us to avoid computation of some of the difficult correlators.  Essentially, for each product, we take a subalgebra containing the factors that can also be thought of as a subalgebra of a theory with a group that does break up as a direct product in a useful way.

Consider a non-degenerate, invertible singularity $W \in \CC[x_1,\dots,x_N] $ and an admissible group $G$ of symmetries of $W$.  We prove that LG mirror symmetry holds for all $(W,G)$ that have the following property:
\begin{property}\label{property}
Let $W$ be a non-degenerate, invertible singularity, and let $G$ be an admissible group of symmetries of $W$. We say that \emph{the pair $(W,G)$ has \autoref{property}} if, 
\begin{enumerate}
\item $W$ can be decomposed as
 $   W = \sum_{i=1}^{M} W_i ,$
where the $W_i$ are themselves invertible polynomials having no variables in common with any other $W_j$.  
\item For any element $g$ of $G$ whose associated sector  $\HH_g \subseteq \HH_{W,G}$ is nonempty, and for each $i\in \{1,\dots ,M\}$ the action of $g$ fixes either all of the variables in $W_i$ or none of them. 
\item For any element $g'$ of $G\tr$ whose associated sector of $\BB_{g'} \subseteq \BB_{W\tr,G\tr}$ is non-empty, and for each $i\in \{1,\dots ,M\}$ the action of $g'$ fixes either all of the variables in $W_i\tr$ or none of them.
\end{enumerate}
\end{property}
\begin{remark}
If $W$ is a two-variable chain, or if $W$ is a sum of Fermat and loop type singularities (see \autoref{atomictypes} of this paper),
then Property \ref{property} is satisfied for \emph{any} admissible group $G$, and \autoref{mainthm} will hold.
\end{remark}
Let $\HH_{W,G}$ and $\BB_{W\tr,G\tr}$ be the FJRW ring (LG A-model Frobenius algebra) and orbifold Milnor ring (LG B-model Frobenius algebra), respectively.
Our main theorem is the following.
\begin{main}
If $W$  and $G$ satisfy \autoref{property}, then there is an isomorphism of Frobenius algebras:
$    \HH_{W,G} \cong \BB_{W\tr,G\tr}$. 
\end{main}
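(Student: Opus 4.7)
The plan is to upgrade Krawitz's vector space mirror map $\mmap : \HH_{W,G} \to \BB_{W\tr, G\tr}$, which already matches bigraded vector spaces and pairings for arbitrary invertible $(W,G)$, to a Frobenius algebra isomorphism. Since the product on each side is determined by the three-point correlators, it suffices to prove that for any basis triple $\alpha \in \HH_{g_\alpha}$, $\beta \in \HH_{g_\beta}$, $\gamma \in \HH_{g_\gamma}$ one has $\langle \alpha, \beta, \gamma \rangle_A = \langle \mmap(\alpha), \mmap(\beta), \mmap(\gamma)\rangle_B$. The whole strategy is to reduce this identity to the ``atomic'' factors $W_i$, where mirror symmetry is already known.

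The first step is structural. Property \ref{property}(2) says that every $g\in G$ with $\HH_g \neq 0$ acts on each block $W_i$ either trivially or with no fixed variables, and (3) says the same on the B-side. Consequently, for each nontrivial sector I obtain a canonical tensor decomposition $\HH_g \cong \bigotimes_i \HH^{(W_i)}_{g_i}$ (and similarly for $\BB_{g'}$). For a fixed triple of sectors in a correlator, I group the indices $\{1,\dots,M\}$ according to which of $g_\alpha, g_\beta, g_\gamma$ act nontrivially on $W_i$; within each block, at most one of the three elements may be trivial or all may act. Next, I pass to an auxiliary group $\tilde G = \prod_i \tilde G_i$ of product type, chosen so that the restrictions of $g_\alpha, g_\beta, g_\gamma$ all lie in $\tilde G$, and I construct a subalgebra $\HH' \subseteq \HH_{W,G}$ containing $\alpha,\beta,\gamma$ that embeds as a Frobenius subalgebra of $\HH_{W, \tilde G}$. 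Because $\tilde G$ decouples, the Sums Axiom (\autoref{ax:sums}) yields $\HH_{W,\tilde G} \cong \bigotimes_i \HH_{W_i, \tilde G_i}$, and \autoref{prop:b-sums} gives the analogous statement on the B-side. Thus the correlator in question factors as a product of correlators in the atomic theories $\HH_{W_i, \tilde G_i}$, matched termwise via $\mmap$ to a product of structure constants in $\BB_{W_i\tr,\tilde G_i\tr}$.

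At this point each factor is a mirror symmetry statement for a single invertible $W_i$ with a (possibly smaller) acting group $\tilde G_i$, which by the class of polynomials described in the remark following \autoref{property} is a Fermat, a loop, or a two-variable chain. For these atomic types the Frobenius algebra isomorphism is already available through \cite{krawitz, priddis, acosta, fan-shen, bergin} together with direct computation of the remaining narrow correlators. The hard part of the argument is the second step above: the subalgebra $\HH'$ lives inside two genuinely different orbifold theories (one with group $G$, one with the enlarged product group $\tilde G$), and I must verify that the A-side correlators do not change under this change of ambient group on the subspace corresponding to $\HH'$. This is where I expect the most work, because broad insertions can have nontrivial transformation behavior and the relevant $G$-invariants must be tracked carefully; the role of Property \ref{property} is precisely to ensure that the sectors where broad insertions actually contribute are exactly those that respect the decomposition $W = \sum W_i$, so that the ambiguity collapses and the tensor product reduction is legitimate. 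Once this compatibility is established, combining it with the atomic case completes the proof.
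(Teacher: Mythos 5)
Your overall strategy---reduce a given triple of insertions to an auxiliary product-type group $\tilde G$, pass to a subalgebra, factor via \autoref{ax:sums} and \autoref{prop:b-sums}, and finish on atomic pieces---is essentially the strategy of the paper (this is what ``splitting nicely'' with respect to a partition $\II$ and groups $\{G_I\}$ accomplishes). But two steps that you treat as deferred or as already available are the substantive content, and as written the argument has genuine gaps. First, the change-of-ambient-group compatibility that you flag as ``where I expect the most work'' cannot be postponed: it is exactly \autoref{lem:change-group} and \autoref{cor:change-group-product}, and it does not follow from the Frobenius-algebra axioms you invoke. The paper proves it by returning to the geometric definition of the correlators, using the finite morphism $\W_{0,3,H}(P;g,h,k)\rightarrow \W_{0,3,H'}(P;g,h,k)$ of stacks of $P$-curves and the fact (Theorem 6.3.5 of \cite{fjr3}) that the virtual class pulls back along this morphism, together with $\Gmax$-invariance (\autoref{gmax-inv}) and \autoref{Jg-grading} to show that basis elements of $\HH_{P,H}$ not lying in $\HH_{P,H'}$ give vanishing correlators. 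Without this geometric input the tensor-product reduction is unjustified.

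Second, your atomic endgame is incorrect as stated. After the reduction you are left with pairs $(W_i,\tilde G_i)$ in which $\tilde G_i$ is typically a \emph{proper} admissible subgroup of $\Gmax_{W_i}$, so the dual group on the B-side is nontrivial; the results you cite (\cite{krawitz,priddis,acosta,fan-shen}) establish the Frobenius algebra isomorphism only when the B-side group is trivial (A-side group maximal), plus a handful of examples. Moreover the correlators still needing attention are not narrow ones amenable to ``direct computation'': the delicate products pair \emph{broad} untwisted-sector elements with twisted-sector elements, or two twisted sectors whose sum is untwisted, and the paper handles these not by computing broad correlators (the unsolved PDE problem it is structured to avoid) but by vanishing and selection arguments (\autoref{lem:vanish-as-well}, \autoref{lem:break-off}, \autoref{rem:gmax-inv}) together with the combinatorial loop and chain lemmas of the appendix (\autoref{lem:srAn}, \autoref{cor:equiv}, showing $[(\M\tr)^{-1}(\brr+\bs+\mathbf 2)]=0$ exactly when $\X^{\brr+\bs}$ is a multiple of the Hessian), and by the rescalings $\rca$, $\rcb$ in \eqref{eq:mymm1} and \eqref{eq:mymm2} chosen so that product and pairing are respected simultaneously. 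You would need to supply these fundamental-pair arguments (the paper's \autoref{fundhomom} and \autoref{thm:pairing}); they are not in the prior literature your proposal leans on.
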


Borisov \cite{borisov} has constructed both an A-model and a B-model Frobenius algebra, and he has proved mirror symmetry for his constructions in the case that the singularity is Calabi-Yau and the symmetry group is both admissible and a subgroup of $\SL_N$.  Borisov's B-model agrees with the Krawitz/Kaufmann $\BB_{W\tr,G\tr}$, but without our mirror symmetry result, it is not at all clear that Borisov's A-model agrees with the FJRW construction $\HH_{W,G}$.
Furthermore, our result applies to polynomials that are not necessarily Calabi-Yau, and to admissible groups that are not necessarily subgroups of $\SL_N$.

\section{Review of the Constructions} \label{sec:review}

\subsection{Quasihomogeneous Polynomials} \label{sec:qpoly}
We call a polynomial \emph{invertible} if it has the same number of variables as monomials.
We start with a quasi-homogeneous, invertible polynomial in variables $X_1, \dots X_N$:
\[
    W = \sum_{i=1}^N c_i\prod_{j=1}^N X_j^{a_{ij}} \in \CC[X_1, \dots, X_N].
\]
The matrix $\M= (a_{ij})$ encodes the exponents of the polynomial.  We require that the weights $q_i$ of $X_i$ be uniquely determined by the condition that $W$ have weighted degree $1$.   
We can write the $q_i$ as 
\begin{equation}\label{intweights}
q_i = \frac{w_i}{d} \quad\text{  with }\quad \gcd(d, w_1,\dots,w_N) = 1.
\end{equation}
In such a polynomial the $c_i$ can be absorbed by rescaling the variables, so we will always assume from now on that $c_i = 1$.

We also require that the polynomial define an isolated singularity at the origin; i.e., the system of equations $\left\{\frac{\partial W}{\partial X_i}=0\right\}$ has a unique solution at the origin.  A polynomial satisfying these two conditions we call \emph{non-degenerate}.  Invertible, non-degenerate, quasi-homogeneous polynomials are completely classified.  
\begin{proposition}[ \cite{KrSk}]\label{atomictypes}\label{sec:classification}
Any invertible, non-degenerate, quasi-homogeneous polynomial is the decoupled sum of polynomials of one of three \emph{atomic types}: 
\begin{align*}
  \text{Fermat \emph{type:} } & W = X^a,\notag\\
  \text{loop  \emph {type:} } & W = X_1^{a_1}X_2 + X_2^{a_2}X_3 + \dots + X_N^{a_N}X_1,\\
   \text{chain  \emph{type:} } & W = X_1^{a_1}X_2 + X_2^{a_2}X_3 + \dots + X_N^{a_N}.\notag
\end{align*}
\end{proposition}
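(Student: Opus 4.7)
The plan is to extract combinatorial structure from the exponent matrix $\M = (a_{ij})$ and then use non-degeneracy to constrain that structure severely enough to force a decoupled sum of atomic types. First I would note that invertibility (same number of monomials as variables) together with the quasi-homogeneity requirement that the weight vector $\mathbf q$ be uniquely determined force $\M$ to be an invertible matrix over $\QQ$, with $\M\mathbf q = (1,\dots,1)\tr$ and all $q_i > 0$. Non-degeneracy of $W$ then says that the ideal generated by $\partial W/\partial X_1,\dots,\partial W/\partial X_N$ cuts out only the origin, which in the quasi-homogeneous setting is equivalent to saying that no simultaneous zero of these partials has any coordinate equal to $1$.

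The central combinatorial step is to assign, to each monomial $M_i = \prod_j X_j^{a_{ij}}$, a \emph{principal variable} $X_{\sigma(i)}$, producing a bijection $\sigma \colon \{1,\dots,N\} \to \{1,\dots,N\}$ with the property that (a) $X_{\sigma(i)}$ appears in $M_i$ with exponent $a_{i,\sigma(i)} \geq 2$, and (b) $M_i / X_{\sigma(i)}^{a_{i,\sigma(i)}}$ is either $1$ or a single other variable $X_{\tau(i)}$ to the first power. To produce $\sigma$, I would run a Hall-type matching argument: for each subset $S$ of variables that appear only as non-principal factors in fewer than $|S|$ monomials, one can build a nonzero common zero of the partials by setting $X_i = 1$ for $i \in S$ and zero elsewhere, contradicting non-degeneracy. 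Ruling out monomials with three or more distinct variables requires a similar test-point argument: if some $M_i$ had support $\{i_1,i_2,i_3,\dots\}$ with all exponents $\geq 1$, then an appropriate choice of unit values for certain variables and zero for others would make every $\partial W/\partial X_k$ vanish outside the origin.

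Once $\sigma$ and $\tau$ exist, I would form a directed graph $\Gamma$ on vertices $\{X_1,\dots,X_N\}$ with one edge from $X_{\sigma(i)}$ to $X_{\tau(i)}$ per monomial (and a self-loop if $M_i$ is a pure power). Since $\sigma$ is a bijection every vertex has out-degree exactly one, so each connected component of $\Gamma$ is a ``rho-shape'' consisting of one directed cycle (possibly of length one) with trees attached. Non-degeneracy applied one more time kills all the attached trees except possibly a single pendant tail: a branching in-vertex would let us produce a common zero of the partials by turning on the extra branch. The three surviving combinatorial shapes, self-loop, pure cycle of length $\geq 2$, and cycle-with-a-tail that is actually a pure path, correspond exactly to Fermat, loop, and chain atomic types. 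Because variables in different components never co-occur in any monomial, $W$ splits as a sum $\sum_i W_i$ in disjoint variable sets, completing the proof.

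The main obstacle is the assignment step of the second paragraph: showing both that $\sigma$ exists and that each monomial has at most two distinct variables. Everything else is graph-theoretic bookkeeping, but this step is where the Jacobian-criterion input really has to be used, and getting the right test points of the form $(1,\dots,1,0,\dots,0)$ (with a precisely chosen support) is the delicate part of the Kreuzer--Skarke argument.
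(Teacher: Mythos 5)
First, note that the paper does not prove this proposition at all: it is quoted from Kreuzer--Skarke \cite{KrSk}, so your proposal has to be measured against that argument. Measured that way, the central step of your sketch is exactly where the content lies, and it is not actually supplied. Your approach is monomial-centric: you want to show directly, by Hall-type matching and test points of the form $(1,\dots,1,0,\dots,0)$, that each monomial has at most two distinct variables and a distinguished ``principal'' one. But the presence of a monomial in three or more variables is not a local obstruction that a test point can detect: non-invertible nondegenerate polynomials contain such monomials freely (e.g.\ $X_1^3+X_2^3+X_3^3+X_1X_2X_3$), so no argument that only inspects one monomial, or one subset of variables, can produce a critical point. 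The actual mechanism in \cite{KrSk} is variable-centric: for every $i$, nondegeneracy forces $W$ to contain a monomial of the form $X_i^{a}$ or $X_i^{a}X_j$, since otherwise every partial derivative vanishes identically along the $X_i$-axis and the singularity is not isolated; then invertibility (exactly $N$ monomials) together with the observation that each such monomial can serve essentially one variable forces these $N$ monomials to exhaust $W$, and only at that point does one know that every monomial is $X_i^{a}$ or $X_i^{a}X_j$. Your ``Hall-type matching'' and ``appropriate choice of unit values'' are placeholders precisely where this counting argument has to go, as you yourself acknowledge in your final paragraph, so the crux of the proof is missing rather than merely delicate.

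Two further points. Your requirement that the principal exponent satisfy $a_{i,\sigma(i)}\ge 2$ is not a consequence of the hypotheses of the proposition: $X_1X_2+X_2X_3+X_3X_1$ and $X_1^2X_2+X_2X_3+X_3^2$ are invertible and nondegenerate, with loop (resp.\ chain) exponents equal to $1$; the restriction $a_i\ge 2$ is a standing assumption the paper imposes only \emph{after} the proposition, so as stated your matching lemma is false, though the classification survives if you allow exponents $\ge 1$. Second, the final graph step is not pure bookkeeping: besides branching in-vertices you must exclude a tail feeding into a cycle of length at least two (e.g.\ $X_1^2X_2+X_2^2X_3+X_3^2X_2$ is degenerate, since setting $X_2=0$ leaves the critical line $X_3=\pm iX_1$), while a tail feeding into a vertex carrying a pure power is exactly a chain and must be allowed; each exclusion is itself a degeneracy computation that has to be carried out (in \cite{KrSk} this is done via the pointer/root structure), not deduced from the shape of the graph alone.
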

We also assume that $a_i \ge 2$, so that there are no terms of the form $X_iX_j$.  

\begin{remark}
Assuming that the variables are listed in order as above, an atomic polynomial is determined by its type and by the vector of non-one exponents $\ba = 
(a_1, \dots, a_N)\tr$. 

Throughout this paper we will use boldface type to represent a column vector, and regular italic type to represent entries in the vector.  Thus, by $\bg$ we mean the vector $(g_1, \dots, g_N)\tr$, where the $N$ must be understood from context. We also write $\one$ for the vector $(1, \dots, 1)\tr$.
\end{remark}

We form the \emph{Berglund-H\"ubsch dual} or  \emph{transpose singularity} $W\tr$ by taking the polynomial with exponent matrix $\M\tr$.  
If $W$ is invertible and non-degenerate, then $W\tr$ is also invertible and non-degenerate.  We will generally denote the variables of $W\tr$ by $Y_i$.  Taking the transpose of any of the atomic types preserves the type.  The variables, however, will be in the reverse order.

\subsubsection{Diagonal symmetry groups}
There is an action of $(\CC^*)^N$ on $\CC[X_1, \dots, X_N]$, where the tuple $(\lambda_1, \dots, \lambda_N) \in (\CC^*)^N$ acts on $X_j$ by multiplication by $\lambda_j$.
\begin{defn}
The \emph{maximal group of diagonal symmetries} $\Gmax_W$, or simply $\Gmax$, if $W$ is clear from context, is the maximal subgroup of $(\CC^*)^N$ which fixes the polynomial $W$.
\end{defn}
We prefer to think of symmetries $\Gmax_W$ as a subgroup of $(\mathbb Q / \mathbb Z)^N$, where the element $[\bg]$ corresponding to the class of the vector $\bg \in \QQ^N$  acts on $X_j$ by multiplication by $\exp(2\pi i g_j)$.
We can find a special set of generators for $\Gmax_W$ and a special element $J$, as follows.
\begin{defn} \label{def:rho-and-J}
The group element in $(\QQ/\ZZ)^N$ corresponding to the $i$th column of $\M^{-1}$ we denote by $\rho_i$.

The element 
$
    J := \sum_{j=1}^N \rho_j = [\M^{-1}\one] = [\mathbf q] \in (\QQ/\ZZ)^N
$ 
 is called the \emph{exponential grading element}.
\end{defn}
\begin{proposition}[Krawitz]
 The $\rho_i$ generate the maximal symmetry group $\Gmax$. Similarly, the rows $\bar \rho_i$ of $\M^{-1}$ generate the maximal symmetry group of $W\tr$.
\end{proposition}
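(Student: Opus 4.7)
The plan is to give an explicit description of $\Gmax_W$ as a subgroup of $(\QQ/\ZZ)^N$ in terms of the exponent matrix $\M$, and then recognize the $\brho_i$ as an obvious generating set. An element $[\bg] \in (\QQ/\ZZ)^N$ acts on the monomial $\prod_j X_j^{a_{ij}}$ by the scalar $\exp\!\bigl(2\pi i (\M\bg)_i\bigr)$, so this monomial is fixed exactly when $(\M\bg)_i \in \ZZ$. Because a diagonal torus acts by a single character on each monomial, and because distinct monomials are linearly independent in $\CC[X_1,\dots,X_N]$, the polynomial $W = \sum_i \prod_j X_j^{a_{ij}}$ is fixed by $[\bg]$ if and only if every individual monomial is fixed. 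This gives the clean characterization
\[
\Gmax_W \;=\; \bigl\{\,[\bg] \in (\QQ/\ZZ)^N : \M\bg \in \ZZ^N\,\bigr\}.
\]

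From here the containment $\brho_i \in \Gmax_W$ is immediate, since by definition $\brho_i = [\M^{-1}\mathbf e_i]$ and so $\M\brho_i = \mathbf e_i \in \ZZ^N$. For generation, I would take an arbitrary $[\bg] \in \Gmax_W$, set $\bn := \M\bg \in \ZZ^N$, and observe that in $\QQ^N$ one has $\bg = \M^{-1}\bn = \sum_{i=1}^N n_i\, \M^{-1}\mathbf e_i$; reducing modulo $\ZZ^N$ yields $[\bg] = \sum_{i=1}^N n_i\, \brho_i$ in $(\QQ/\ZZ)^N$. Thus the $\brho_i$ generate $\Gmax_W$. For the transpose statement, exactly the same argument applies to $W\tr$, whose exponent matrix is $\M\tr$: the generators of $\Gmax_{W\tr}$ are the columns of $(\M\tr)^{-1} = (\M^{-1})\tr$, which are precisely the rows $\bbrho_i$ of $\M^{-1}$.

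The main difficulty is conceptual rather than technical: one must carefully distinguish between the lattice picture in $\QQ^N$ and the quotient picture in $(\QQ/\ZZ)^N$. In $\QQ^N$ the columns of $\M^{-1}$ form a $\QQ$-basis but certainly not a $\ZZ$-basis of the lattice $\M^{-1}\ZZ^N$; the statement becomes true only after passing to the torus quotient, where $\Gmax_W$ is finite and the $\brho_i$ become $\ZZ$-generators. Once this viewpoint is in place, everything reduces to linear algebra, with the only geometric inputs being invertibility of $\M$ (guaranteed by the non-degeneracy and invertibility of $W$) and the linear independence of distinct monomials in $\CC[X_1,\dots,X_N]$.
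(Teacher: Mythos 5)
Your proof is correct. The paper itself gives no argument for this proposition --- it is quoted from Krawitz --- and your computation is precisely the standard one behind that citation: the characterization $\Gmax_W=\{[\bg]\in(\QQ/\ZZ)^N : \M\bg\in\ZZ^N\}$, i.e.\ $\Gmax_W=\M^{-1}\ZZ^N/\ZZ^N$, immediately exhibits the columns $\rho_i=[\M^{-1}\mathbf e_i]$ as generators, and applying the same argument to $\M\tr$ gives the statement for $W\tr$. The only point you leave implicit is the passage from the definition of $\Gmax_W$ as a subgroup of $(\CC^*)^N$ to the picture in $(\QQ/\ZZ)^N$: for $(\lambda_1,\dots,\lambda_N)$ fixing each monomial one has $\prod_j|\lambda_j|^{a_{ij}}=1$, so invertibility of $\M$ forces $|\lambda_j|=1$, and then $\M\bg\in\ZZ^N$ forces the phases $g_j$ to be rational; since the paper adopts this identification before stating the proposition, this is a remark rather than a gap in your argument.
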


\begin{defn}[Krawitz]
The dual group $G\tr$ is a group of symmetries of $W\tr$ defined by
\begin{equation}
	G\tr := \left\{ [\bg] \mid \bg\tr \M \bb \in \ZZ^N \text{ for all } [\bb] \in G\right\}.
\end{equation}
\end{defn}
One can check that $G\tr$ is a group and that the definition is independent of the presentation of the elements of $G$.  Additionally, the transpose group has the following properties, which are verified in \cite{krawitz}:
\begin{itemize}
	\item If $G$ contains $J$, then $G\tr$ is contained in $\SL_N$, and vice-versa.
	\item $(G\tr)\tr = G$.
	\item If $G' \le G$, then $G\tr \le (G')\tr$. 
\end{itemize}

\subsubsection{Milnor Rings}
\begin{defn}
The \emph{Jacobian ideal} $\operatorname{Jac}(W)$ of a polynomial $W$ is the ideal generated by the partial derivatives:
\[
    \operatorname{Jac}(W) := \left(\frac{\partial W}{\partial X_1}, \dots, \frac{\partial W}{\partial X_N}\right)
\]
and the \emph{Milnor ring} $\QQQ_W$ (also called the \emph{local algebra}) is
\[
\QQQ_W := \mathbb C[X_1, \dots, X_N]/\operatorname{Jac}(W).
\]
\end{defn}
The Milnor ring is a finite-dimensional $\CC$-vector space, graded by the weighted degree of the monomials.  The subspace of highest weighted degree is one-dimensional, is spanned by the Hessian determinant $\hess(W)$, and has weighted degree equal to the \emph{central charge}  $\hat c  = \sum_{j=1}^N (1-2q_j)$.

The residue pairing $\langle\ ,\ \rangle$ makes $\QQQ_W$ into a Frobenius algebra; that is, for every $a,b,c \in \QQQ_W$ we have $$\langle a\cdot b , c \rangle = \langle a, b \cdot c \rangle.$$
The residue pairing in $\QQQ_W$ can be computed as $\left<a,b\right> = \mu \frac{ab}{\hess(W)}$, by which we mean 
\begin{equation}\label{respair}
ab = \mu\hess(W) \left<a,b\right>
 + \text{lower-degree terms},
 \end{equation}
   where $\mu$ is the dimension of the vector space $\QQQ_W$.

\subsection{The A-model}
The Landau-Ginzburg A-model Frobenius algebra is the so-called \emph{FJRW ring}.  We give a description in terms of Milnor rings which is more elementary than that used in \cite{fjr1} and is sufficient for our computations.
\begin{defn} Any subgroup of $G$ of $\Gmax$ containing the exponential grading element  $J$  is called \emph{admissible}.
For each $g \in \Gmax$, we define $W_g$ to be the restriction of the polynomial $W$ to the subspace of $\CC^N$ fixed by $g$.  
\medskip
\subsubsection{The A-model state space}
The state space of the FJRW ring is the underlying vector space of the ring.  This is constructed by taking a direct sum of \emph{sectors}---one for each $g\in G$.
\begin{defn}\label{defnarrow}
Let $W$ be a non-degenerate, invertible singularity and $G$ be admissible group of symmetries of $W$.  For any $g\in G$,  the \emph{unprojected $g$-sector} $\uHH_g$ is the $G$-module
\[
    \uHH_g := \QQQ_{W_g} \cdot dX_{i_1} \wedge \dots \wedge dX_{i_{N_g}},
\]
where $N_g$ is the number of variables fixed by $g$, and $i_1, \dots i_{N_g}$ are the indices of the fixed coordinates. 

The $g$-sector is called \emph{narrow} if $N_g = 0$, and \emph{broad} otherwise.    
Each $g = (\Theta_1,\dots,\Theta_N) \in G$ acts on both the variable $X_j$ and on the one-form $dX_j$ by multiplication by $\exp(2\pi i \Theta_j)$. 

The state space of the A-model, or underlying vector space of the FJRW ring, is given by taking the direct sum of the un-projected sectors and taking $G$ invariants:
\begin{equation}
    \HH_{W,G} := \left(\bigoplus_{g \in G} \uHH_g\right)^G 
    \dsand \HH_g := \uHH_g^G.
        \label{eq:HWG}
\end{equation}
We call $\HH_g$ the \emph{projected $g$-sector}, or \emph{$g$-sector of the FJRW ring.}
\end{defn}

We denote an element of $\uHH_g$ or $\HH_g$ by 
$\kn{m}{g},$
where $m$ is an element of the ring $\QQQ_{W_g}$.   We do not explicitly write the volume form, since it is determined by the group element.  This notation is not standard in the subject, but we find it clearer than other notations, especially when we are discussing the mirror map.
\end{defn}

\subsubsection{The pairing and three-point correlators}
The Milnor ring always has a basis of monomials, and since the $G$-action is diagonal, the $G$-invariants $\HH_g$ also have a basis of monomials.  Thus, we can write a basis for $\uHH_g$ or of $\HH_g$ with elements of the form $\kn{\X^\brr}{g}$, where  by $\X^{\brr}$ we mean the monomial $\prod_{j = 1}^{N_g} X_{i_j}^{r_j} \in \QQQ_g$.   

\begin{defn}
We endow $\HH_{W,G}$ with a pairing as follows.  Since $g$ and $-g$ fix the same coordinates, there is a natural isomorphism
$$
    I: (\HH_g)^G \rightarrow  (\HH_{-g})^G \quad \text{ given by } \quad
    \kn{\X^{\brr}}{g} \mapsto \kn{\X^{\brr}}{-g}.
$$
The pairing on the Milnor ring $\QQQ_g$ induces a pairing
\[
    \left< \cdot, \cdot \right>_g:(\HH_g)^G \otimes (\HH_{-g})^G \rightarrow  \mathbb C
\]
\[
    \left< \kn{\X^{\brr}}{g}, \kn{\X^{\bs}}{-g} \right>_g = \left< \kn{\X^{\brr}}{g}, I^{-1}(\kn{\X^{\bs}}{-g})\right>_{\QQQ_{W_g}}.
\]
The pairing on the full state space $\HH_{W,G}$ is the ``direct sum" of these pairings---that is, if two basis elements are from sectors $g$ and $-g$, we use the pairing $\left<\cdot, \cdot \right>_g$ above, and otherwise, the pairing is 0.   
\end{defn}

The FJRW theory provides a full cohomological field theory, but for the purposes of this paper, we are only concerned with the Frobenius algebra structure, arising from the genus-zero, three-point correlators, which we denote by 
\[
    \left< \cdot, \cdot, \cdot \right>^{W,G} : \HH_{W,G}^{\otimes 3} \rightarrow  \CC.
\]
We omit the superscripts ${W,G}$ when they are clear from context.  We discuss the computation of these correlators in \autoref{sec:axioms}.
The product in the FJRW ring is given by
\begin{equation}
    \alpha \star \beta = \sum_{\tau,\sigma} \left<\alpha, \beta, \tau\right> \eta^{\tau,\sigma} \sigma, \label{eq:mul}
\end{equation}
where the sum is over all pairs of elements $\sigma, \tau$ from a fixed basis of $\HH_{W,G}$, and where $\eta^{\tau,\sigma}$ is the inverse of the pairing matrix with respect to this basis. The FJRW ring $\HH_{W,G}$ is a graded Frobenius algebra with a $\QQ$-grading that we call the  \emph{$W$-degree}.

\subsubsection{A-model axioms} \label{sec:axioms}
The genus-zero, three-point correlators may be difficult to compute in general.  However, they satisfy some axioms that allow us to compute them  in many cases.  These axioms all follow immediately, by restriction to the genus-zero, three-point case, from the axioms for the FJRW virtual cycle described in \cite{fjr1}.  

Throughout this section, for $i \in\{ 1,2,3\}$ we assume that $\gamma_i \in \HH_{g_i} \subseteq \HH_{W, G}$ are elements of the FJRW ring, with $g_i = \bTheta^i = (\Theta_1^i,\dots,\Theta_N^i)\tr$ and  $0\le \Theta_j^i<1$ for every $i,j$.
We will only list here the axioms we need for this paper.  The remaining axioms can be found in \cite[Thm 4.1.8]{fjr1} and \cite[\S1.2]{priddis}.

\begin{ax}[Symmetry]
Let $\sigma \in S_3$.  Then
\[
    \left<\gamma_1, \gamma_2, \gamma_3\right> = \left<\gamma_{\sigma(1)}, \gamma_{\sigma(2)}, \gamma_{\sigma(3)}\right>.
\]
\end{ax}

The next axioms relate to the degree of certain line bundles.  For genus zero, these degrees are given by
\[
    l_j = q_j - \sum_{i=1}^3 \Theta^i_j \text{ for $j \in\{ 1,2,3\}$ }.
\]

\begin{ax}[Integer Line Bundle Degrees] \label{lbd-ax}
The correlator $\left<\gamma_1, \gamma_2, \gamma_3\right>$ vanishes unless $l_j \in \mathbb Z$ for $j=1,\dots, N$.
\end{ax}

The following observation, due to Krawitz, follows from \autoref{lbd-ax}.
\begin{proposition} \label{Jg-grading}
Suppose $\left<\gamma_1, \gamma_2, \gamma_3\right>$ does not vanish.  Then $g_3 = J - g_1- g_2$.
Thus $\gamma_1 \star \gamma_2 \in \HH_{g_1+ g_2 - J}$, and
$
    \HH_{g_1 + J} \star \HH_{g_2 + J} \subset \HH_{g_1 + g_2 + J}.
$
\end{proposition}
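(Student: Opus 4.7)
The plan is to extract the statement directly from the Integer Line Bundle Degrees axiom combined with the definition of $J$. Recall that $J = [\mathbf{q}] \in (\QQ/\ZZ)^N$, so $q_j \equiv J_j \pmod{\ZZ}$ for each $j$. Writing $g_i = \bTheta^i$, the axiom says that if the correlator is nonzero then
\[
    l_j = q_j - \sum_{i=1}^3 \Theta_j^i \in \ZZ \quad \text{for all } j,
\]
which is equivalent to $\Theta_j^1 + \Theta_j^2 + \Theta_j^3 \equiv q_j \pmod{\ZZ}$, i.e.\ $g_1 + g_2 + g_3 = J$ in $(\QQ/\ZZ)^N$. Rearranging gives $g_3 = J - g_1 - g_2$, which is the first assertion.

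For the product statement, I would use formula \eqref{eq:mul}:
\[
    \gamma_1 \star \gamma_2 = \sum_{\tau,\sigma} \langle \gamma_1, \gamma_2, \tau \rangle\, \eta^{\tau,\sigma} \sigma.
\]
Pick a basis of $\HH_{W,G}$ adapted to the sector decomposition, so each $\tau$ lies in some $\HH_{g_\tau}$ and each $\sigma$ in some $\HH_{g_\sigma}$. By the definition of the pairing, $\eta^{\tau,\sigma}$ vanishes unless $g_\sigma = -g_\tau$. By the first part of the proposition, $\langle \gamma_1, \gamma_2, \tau \rangle$ vanishes unless $g_\tau = J - g_1 - g_2$. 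Combining these two constraints, only those $\sigma \in \HH_{g_1 + g_2 - J}$ can appear with nonzero coefficient, so $\gamma_1 \star \gamma_2 \in \HH_{g_1 + g_2 - J}$.

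The final inclusion is then an immediate substitution: if $\alpha \in \HH_{g_1 + J}$ and $\beta \in \HH_{g_2 + J}$, then applying what we just proved with $g_1$ replaced by $g_1 + J$ and $g_2$ by $g_2 + J$ yields
\[
    \alpha \star \beta \in \HH_{(g_1+J) + (g_2+J) - J} = \HH_{g_1 + g_2 + J}.
\]
There is no real obstacle here; the only subtle point is being careful that the pairing is block-diagonal with respect to the sector decomposition, pairing $\HH_g$ nontrivially only with $\HH_{-g}$, which is built into the definition of $\left<\cdot,\cdot\right>$ given earlier. Everything else is bookkeeping in $(\QQ/\ZZ)^N$.
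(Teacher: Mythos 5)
Your proposal is correct and follows essentially the same route as the paper, which simply records this statement as an observation following from the Integer Line Bundle Degrees Axiom (\autoref{lbd-ax}): the condition $l_j \in \ZZ$ for all $j$ gives $g_1+g_2+g_3 = J$ in $(\QQ/\ZZ)^N$, and the sector statement then follows from the multiplication formula \eqref{eq:mul} together with the block-diagonality of the pairing, exactly as you argue.
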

%


\begin{ax}[Pairing] \label{ax:pairing}
Let $\ringid = \kn 1J$.  Then
$\left<\gamma_1, \gamma_2, \ringid \right> = \pair{\gamma_1}{\gamma_2}.$
\end{ax}
This axiom implies that $\ringid$ is the identity element with respect to the multiplication \eqref{eq:mul} in the FJRW ring.  Do not confuse the identity element of the ring with the element $\kn{1}{0}$ in the identity sector.

%
\begin{ax}[Decoupled Sums] \label{ax:sums}
Suppose $W_1$ and $W_2$ are non-degenerate, quasi-homogeneous polynomials with no variables in common.  Suppose $G_1$ and $G_2$ are admissible groups of diagonal symmetries  for $W_1$ and $W_2$ respectively.  Then $G_1 \times  G_2$ is an admissible group of diagonal symmetries for $W_1 + W_2$.
Suppose
\[
    \kn{m_in_i}{g_i + h_i} \in \HH_{W_1 + W_2, G_1 \times  G_2}
\]
for $i = 1,2,3$, with $m_i \in \QQQ_{W_1}$, $n_i \in \QQQ_{W_2}$, $g_i \in G_1$, and $h_i \in G_2$.  Then the three-point correlator
\[
    \left< \kn{m_1 n_1}{g_1 + h_1}, \kn{m_2 n_2}{g_2 + h_2}, \kn{m_3 n_3}{g_3 + h_3} \right>^{W_1+W_2, G_1 \times  G_2}
\]
is equal to the product
\[
\left< \kn{m_1}{g_1}, \kn{m_2}{g_2}, \kn{m_3}{g_3} \right>^{W_1,G_1} \cdot \left< \kn{n_1}{h_1}, \kn{n_2}{h_2}, \kn{n_3}{h_3} \right>^{W_2,G_2}.
\]

This gives an isomorphism
\[
    \HH_{W_1, G_1} \otimes \HH_{W_2, G_2} \cong \HH_{W_1 + W_2, G_1 \times  G_2}.
\]
\end{ax}

\begin{ax}
\label{gmax-inv}
The three-point correlator is invariant under the action of $\Gmax$, i.e.,
\[
    \left<h\gamma_1, h\gamma_2, h\gamma_3\right> = \left<\gamma_1, \gamma_2, \gamma_3\right>
\]
for all $h \in \Gmax$.
\end{ax}

\begin{remark} \label{rem:gmax-inv}
\autoref{gmax-inv} gives another valuable selection rule as follows.  
 If $\gamma_i = \kn{\X^{\brr_i}}{g_i}$ with $g_i \in G_W$ for $i\in \{1,2,3\}$, then for  every $h = [\bh] \in \Gmax_W$, denote by $\bhbar{i}$ the restriction of $\bh$ to the fixed locus of $g_i$; that is, the $j$th coordinate of $\bh$ is set to zero in $\bhbar{i}$ if the $j$th coordinate of $g_i$ is non-trivial. The correlator  $\left<\kn{\X^{\brr_1}}{g_1}, \kn{\X^{\brr_2}}{g_2},\kn{\X^{\brr_3}}{g_3}\right>^{W,G}$ vanishes unless
\begin{equation}\label{newselrule}
    \sum_{i=1}^3 (\brr_i + \one)\tr \bhbar{i} \in \ZZ.
\end{equation}
\end{remark}

\subsection{The B-model}
The Frobenius algebra for the B-model we call the \emph{orbifolded Milnor ring}.
In this paper, on the B-side we will only consider diagonal automorphism groups  which are subgroups of $\SL_N$.  Here, we are thinking of the elements of $G$ as linear transformations of $\CC^N$.  In our notation, $ g  = [\bTheta]  \in \SL_N$ is equivalent to $\sum_i \Theta_i \in \ZZ$.  As mentioned above, this condition is dual to the condition that the A-model orbifold group contain $J$.

The construction of the state space (underlying vector space) is identical to the construction of the FJRW state space.  We again take the un-projected $g$ sector
\[
    \uBB_g = \QQQ_{W_g} \cdot dX_{i_1} \wedge \dots \wedge dX_{i_{N_g}}
\]
with the same $G$-action as before, and again let the (projected) $g$-sector $\BB_g$ of our B-model be the $G$-invariants:
\[
\BB_g := \uBB_g^G \quad \text{ and } \quad    \BB_{W,G} = \left(\bigoplus_g \uBB_g\right)^G.
\]
The pairing is also defined in the same way as for the A-model.  

The main difference between the A- and B-models is in the product.  The B-model product makes $\BB_{W,G}$ into a $G$-graded Frobenius algebra.  In fact the product does more---it can be defined on the entire unprojected state space $\uBB_{W,G} := \bigoplus_{g\in G} \uBB_g$ and there it defines a \emph{$G$-Frobenius algebra} (cf. \cite{kau1, JKK:05, Tur:10}).
 
The product was first written down explicitly in \cite{krawitz}, following ideas from \cite{kau1}.  To define the product on $\BB_{W,G}$, first note that for all $g$ there is a surjective homomorphism of Milnor rings $\QQQ_e \rightarrow  \QQQ_g$ given by setting to zero all variables not fixed by $g$.  Thus, $\QQQ_g$ may be thought of as a cyclic $\QQQ_e$-module with generator $\kn{1}{g}$.   For any two elements $g,h\in G$ we define $W_{g,  h}$ to be the restriction of the polynomial $W$ to the subspace of $\CC^N$ fixed by both $g$ and $h$.  Define 
$$\QQQ_{g,h}:= \QQQ_{W_{g,  h}}$$
to be the Milnor ring of $W_{g,  h}$.  There are three surjective ring homomorphisms:
$${e_g}: \QQQ_g \rightarrow  \QQQ_{g,h}, \qquad  {e_h}:\QQQ_h 	\rightarrow   \QQQ_{g, h} \dsand    {e_{g+h}}:  \QQQ_{g+h} \rightarrow  \QQQ_{g,h}.
$$
The residue pairing $\eta$ is non-degenerate for each of the Milnor rings, and hence we can use it to construct vector-space isomorphisms $\eta_{g,h}^{\flat}:\QQQ_{g,h} \rightarrow  \QQQ_{g,h}^{\vee}$ and $\eta_{g+h}^{\sharp}:\QQQ_{g+h}^{\vee} \rightarrow  \QQQ_{g+h}$, where $^{\vee}$ indicates the dual vector space.   We can also can dualize the homomorphism $e_{g+h}$ to construct an injective linear map $e_{g+h}^{\vee}:   \QQQ_{g, h}^{\vee} \rightarrow   \QQQ_{g+h}^{\vee}$.  Combining these we  define 
$$e_{g+h}^* = \eta^{\sharp}_{g+h} \circ e_{g+h}^{\vee} \circ \eta^{\flat}_{g,h}: \QQQ_{g,h} \rightarrow  \QQQ_{g+h}.$$
Finally, we define $I_g$, $I_h$, $I_{g+h}$ to be the sets of indices  fixed by the group elements $g$, $h$, and $g+h$, respectively.

\begin{defn}\label{Bprod}
If $\kn{m}{g} \in \BB_g$ and $\kn{n}{h}\in \BB_h$, then $m\in\QQQ_g$ and $n\in \QQQ_h$.  We define the product  $\kn{m}{g} \bstar \kn{n}{h}$ to be 
\begin{equation}\label{eq:bmul}
\kn{m}{g} \bstar \kn{n}{h} := \begin{cases}
\kn{e_{g+h}^*\left(e_g(m) \cdot e_h(n)\right)}{g+h} & \text{ if  $I_g \cup I_h \cup I_{g+h} = \{1,2,\dots, N\}$}\\
0 & \text{ otherwise,}
\end{cases}
\end{equation}
and extend the product linearly to the rest of $\BB_{W,G}$.
\end{defn}

Using Equation~\eqref{respair}, a straightforward computation shows that \autoref{Bprod} is equivalent to the formulation given in \cite{krawitz}:
\[
    \mu_{g+h} \hess(W_{g, h}) \kn{1}{g} \bstar \kn{1}{h} = \mu_{g,h} \hess(W_{g+h}) \kn{1}{g+h},
\]
where $\hess(W_{g, h})$ and $\hess(W_{g+h}) $ denote the Hessian determinants of $W_{g,h}$ and $W_{g+h}$, respectively,  and   $\mu_{g, h}$ and $\mu_{g+h}$ denote the dimensions of the vector spaces $\QQQ_{g,h}$ and $\QQQ_{g+h}$, respectively. \cite{krawitz} shows this product is associative for all invertible $W$ and all $G\le \SL_N$.  

The orbifold Milnor ring also has a $\QQ$-grading that matches the $W$-degree in our mirror symmetry. 

The orbifold Milnor ring also has a tensor product property analogous to \autoref{ax:sums}.  This is easy to check from the definitions. 
\begin{proposition} \label{prop:b-sums}
Suppose $W_1$ and $W_2$ are non-degenerate, quasi-homogeneous polynomials with no variables in common.  Suppose $G_1\le \SL_{N_1}$ and $G_2\le \SL_{N_2}$ are groups of diagonal symmetries. Then $G_1 \times  G_2$ is contained in $\SL_{N_1+N_2}$ and is a group of diagonal symmetries for $W_1 + W_2$. Moreover, there is an isomorphism
\[
    \BB_{W_1, G_1} \otimes \BB_{W_2, G_2} \cong \BB_{W_1 + W_2, G_1 \times G_2}.
\]
\end{proposition}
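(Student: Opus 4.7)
The plan is to verify the decomposition statement sector by sector and then check that the product structure of \autoref{Bprod} respects the tensor decomposition on each pair of sectors. The group-theoretic claim is immediate: if $\bg\in G_1$ and $\bh\in G_2$ satisfy $\sum g_i, \sum h_i \in \ZZ$, then $(\bg,\bh)\in (\QQ/\ZZ)^{N_1+N_2}$ has coordinate sum in $\ZZ$, so $G_1\times G_2\le \SL_{N_1+N_2}$; and since $G_1$ fixes $W_1$ and $G_2$ fixes $W_2$ while acting on disjoint variable sets, the product group fixes $W_1+W_2$.

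Next I would check the isomorphism at the level of the underlying graded vector space. Because $W_1$ and $W_2$ share no variables, $\operatorname{Jac}(W_1+W_2)=\operatorname{Jac}(W_1)\otimes \CC[Y]+\CC[X]\otimes\operatorname{Jac}(W_2)$, giving the well-known Milnor-ring factorization $\QQQ_{W_1+W_2}\cong \QQQ_{W_1}\otimes \QQQ_{W_2}$. For any $(g,h)\in G_1\times G_2$, the fixed subspace of $(g,h)$ is the product of the fixed subspaces of $g$ and $h$, and the restricted polynomial is $(W_1+W_2)_{(g,h)}=(W_1)_g+(W_2)_h$, so the same factorization gives $\QQQ_{(W_1+W_2)_{(g,h)}}\cong \QQQ_{(W_1)_g}\otimes \QQQ_{(W_2)_h}$. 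Taking $(G_1\times G_2)$-invariants commutes with this tensor product because the $G_1$-action is trivial on the $W_2$-factor and vice versa. Summing over $(g,h)\in G_1\times G_2$ and comparing with the indexing over $G_1\times G_2$ on the right-hand side yields the claimed vector-space isomorphism, which I would denote $\Phi$. The pairing factors identically, since the Hessian of a decoupled sum is the product of Hessians and the residue pairing is multiplicative over tensor products.

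For the product, I would verify that each ingredient of the formula in \autoref{Bprod} respects the tensor decomposition. Given $(g_1,h_1),(g_2,h_2)\in G_1\times G_2$, the double-fixed polynomial splits: $(W_1+W_2)_{(g_1,h_1),(g_2,h_2)}=(W_1)_{g_1,g_2}+(W_2)_{h_1,h_2}$, so $\QQQ_{(g_1,h_1),(g_2,h_2)}\cong \QQQ_{g_1,g_2}\otimes \QQQ_{h_1,h_2}$. Under this identification, each surjection $e_{(g_i,h_i)}$ factors as $e_{g_i}\otimes e_{h_i}$ (setting to zero the variables not fixed), and likewise $e_{(g_1+g_2,h_1+h_2)}=e_{g_1+g_2}\otimes e_{h_1+h_2}$. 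Using multiplicativity of the residue pairing, the dualized maps also factor, giving $e^*_{(g_1+g_2,h_1+h_2)} = e^*_{g_1+g_2}\otimes e^*_{h_1+h_2}$. The support condition $I_{(g_1,h_1)}\cup I_{(g_2,h_2)}\cup I_{(g_1+g_2,h_1+h_2)}=\{1,\dots,N_1+N_2\}$ decouples into the analogous conditions on $W_1$ and $W_2$ separately. Assembling these, $\Phi\bigl((\kn{m_1}{g_1}\bstar \kn{m_2}{g_2})\otimes(\kn{n_1}{h_1}\bstar \kn{n_2}{h_2})\bigr)=\Phi(\kn{m_1}{g_1}\otimes \kn{n_1}{h_1})\bstar \Phi(\kn{m_2}{g_2}\otimes \kn{n_2}{h_2})$, so $\Phi$ is a Frobenius algebra isomorphism.

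The only place where any genuine bookkeeping is required is in confirming that the pairing-duality composite $e^*_{g+h}=\eta^\sharp_{g+h}\circ e^\vee_{g+h}\circ \eta^\flat_{g,h}$ factors through tensor products. This reduces to the elementary fact that for any finite-dimensional vector spaces $V_1,V_2,W_1,W_2$ carrying nondegenerate pairings and any surjections $V_i\twoheadrightarrow W_i$, the dual-transpose across the pairings of $V_1\otimes V_2\twoheadrightarrow W_1\otimes W_2$ is the tensor product of the individual dual-transposes, which follows from the general identity $(\eta_1\otimes\eta_2)^\sharp = \eta_1^\sharp\otimes \eta_2^\sharp$. There is no real obstacle beyond making the tensor-product identifications explicit and carrying them through each step; the result is then purely formal.
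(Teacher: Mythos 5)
Your verification is correct and matches the paper's approach: the paper simply remarks that this tensor-product property "is easy to check from the definitions," and your sector-by-sector check of the group claim, the Milnor-ring factorization, the invariants, the pairing, and the maps $e_g$, $e^*_{g+h}$ in \autoref{Bprod} is exactly that check carried out in detail.
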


\section{The mirror map}

Consider the case of a non-degenerate, invertible, quasi-homogeneous polynomial $W$ and an admissible group $G$ of symmetries of $W$.
Our main result is 
\begin{theorem} \label{mainthm}
If $(W,G)$ 
satisfies \autoref{property}, then there is an isomorphism of Frobenius algebras:
$	\HH_{W,G} \cong \BB_{W\tr,G\tr} $.
\end{theorem}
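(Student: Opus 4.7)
The plan is to define the mirror isomorphism $\mmap\colon \HH_{W,G}\to \BB_{W\tr,G\tr}$ by Krawitz's explicit sector-by-sector formula and to verify that it respects the pairing and the product. Since Krawitz has already established that this map is a bi-graded vector space isomorphism for every invertible $W$ and every admissible $G$, and since pairing preservation is visibly sectorwise once the underlying Milnor rings are identified, essentially all the work lies in verifying multiplicativity under the hypotheses of \autoref{property}.

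The first observation I would record is the structural consequence of \autoref{property}: for each nontrivial sector $\HH_g\subseteq \HH_{W,G}$, the element $g$ fixes whole atomic blocks, so $W_g=\sum_{i\in S_g}W_i$ for some $S_g\subseteq\{1,\dots,M\}$ and hence $\QQQ_{W_g}\cong \bigotimes_{i\in S_g}\QQQ_{W_i}$ factors along the decomposition $W=\sum W_i$. The same holds on the B-side via condition (3). Every monomial basis element of $\HH_g$ (respectively $\BB_{g'}$) splits canonically as a tensor product over atomic factors, and by construction the mirror map respects this splitting.

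For product compatibility, I would follow the strategy foreshadowed in the introduction. For each triple $(\alpha_1,\alpha_2,\alpha_3)$ with $\alpha_i\in\HH_{g_i}$ satisfying the selection rule $g_1+g_2+g_3=J$ of \autoref{Jg-grading}, use the splitting above to view the insertions inside an auxiliary theory $\HH_{W,\tilde G}$ whose orbifold group $\tilde G=\tilde G_1\times\cdots\times\tilde G_M$ is a direct product along the atomic decomposition, chosen to contain $g_1,g_2,g_3$ and to keep the $\alpha_i$ invariant. In this auxiliary theory, \autoref{ax:sums} factors the correlator into a product over atomic pieces, and the $\Gmax$-invariance of \autoref{gmax-inv} together with the selection rule of \autoref{rem:gmax-inv} matches nonvanishing conditions between the two theories, so the atomic-factor computation transfers back to $\HH_{W,G}$. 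The B-side receives the parallel treatment using \autoref{prop:b-sums}. Once the problem is reduced to atomic Fermat, loop, and two-variable chain pieces with their own admissible groups, multiplicativity of the mirror map is known from prior work of Krawitz, Priddis, Acosta, Fan--Shen, and Bergin, and assembles into the desired global isomorphism.

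The main obstacle I expect is making the \emph{enlarge, compute, restrict} step rigorous: the enlargement $\tilde G$ is neither a subgroup nor an overgroup of $G$ in any automatic way, and the Frobenius algebras $\HH_{W,G}$ and $\HH_{W,\tilde G}$ have genuinely different state spaces, so one must identify an explicit subalgebra of each side containing all the relevant tensor-decomposed basis vectors and show that correlators agree term by term. The crucial technical point that I would lean on is that the selection rules of \autoref{lbd-ax} and \autoref{rem:gmax-inv} depend only on $\Gmax$-level data of the insertions rather than on the specific orbifold group defining the theory; under \autoref{property} this is exactly the invariance needed for the atomic tensor decomposition to pass cleanly through both the A-model and B-model products and to conclude that $\mmap$ is a Frobenius algebra isomorphism.
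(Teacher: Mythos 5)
There is a genuine gap at the very point your proposal leans on, namely the claim that after reducing along the atomic decomposition ``multiplicativity of the mirror map is known from prior work of Krawitz, Priddis, Acosta, Fan--Shen, and Bergin.'' Those results only establish the Frobenius algebra isomorphism when the B-side orbifold group is trivial, i.e.\ when the A-side group is $\Gmax$ (plus a handful of sporadic examples). After your reduction you are left with atomic (or fundamental) pieces $(W_i,G_i)$ where $G_i$ is an \emph{arbitrary} admissible group, hence $G_i\tr$ is generally nontrivial, and for these the multiplicativity of $\mmap$ is precisely the new content of the paper, not something you may quote. Concretely, the paper must prove it by hand in \autoref{fundhomom}: narrow-times-narrow products are matched using the explicit rescaling constants $\rca,\rcb$, Krawitz's narrow multiplication formula, and the exponent-vector lemmas for loops and chains (\autoref{lem:srAn}, \autoref{rem:1of3}, \autoref{lem:2a-2}, \autoref{cor:equiv}), while mixed products of a broad untwisted element with a narrow twisted element require the separate vanishing argument of \autoref{lem:vanish-as-well}, which uses $\Gmax$-invariance and \autoref{lem:break-off}. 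None of this is supplied by your outline, and it is where the actual difficulty with broad sectors lives.

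The second soft spot is your ``enlarge, compute, restrict'' step. You correctly identify it as the main obstacle, but the resolution you offer --- that the selection rules of \autoref{lbd-ax} and \autoref{rem:gmax-inv} depend only on $\Gmax$-level data --- only shows that the \emph{same} correlators are forced to vanish in $\HH_{W,G}$ and in the auxiliary product-group theory; it says nothing about the nonvanishing correlators taking equal \emph{values} in the two theories. That equality is \autoref{lem:change-group} (and \autoref{cor:change-group-product}), and the paper cannot derive it from the listed axioms: it goes back to the moduli-theoretic definition of the correlators, using the finite morphism between the stacks of $W$-curves for the two groups and the fact that the virtual class pulls back along it (Theorem 6.3.5 of \cite{fjr3}). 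Without this geometric input, the transfer of the tensor-factorized computation from $\HH_{W,\tilde G}$ back to $\HH_{W,G}$ (and likewise the B-side bookkeeping via \autoref{lem:b-change-group} and the ``splits nicely'' partitions of \autoref{def:nicely}) does not close. So the architecture of your proposal matches the paper's strategy, but the two load-bearing ingredients --- the change-of-group theorem for correlator values and the fundamental-pair computation with nontrivial dual group --- are assumed rather than proved.
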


Marc Krawitz defined two mirror maps in \cite{krawitz}.  The first is an isomorphism of graded vector spaces $\BB_{W\tr,G\tr} \rightarrow  \HH_{W,G}$, defined for any $(W,G)$, but it is not necessarily a ring homomorphism.  The second is an algebra isomorphism $\BB_{W\tr,0} \rightarrow  \HH_{W,\Gmax}$, but it is not defined for general groups.  In this section, we will show how to combine these two maps to get a mirror map defined for all pairs $(W,G)$ satisfying \autoref{property}.

\subsection{Basic Setup}\label{BasicSetup}

The fact that $(W,G)$ satisfies \autoref{property} means that $W$ has a decomposition   
\begin{equation}
	W = \sum_{i=1}^M W_i \label{eq:Wsum},
\end{equation}
where each $W_i$ is itself a non-degenerate, invertible polynomial,  having no variables in common with any of the other $W_j$, and such that \autoref{property} holds for this decomposition.  Unless otherwise stated, we will assume hereafter that this decomposition has been fixed, once and for all.

\begin{notat} \label{not:split}
Notice that $\HH_{W,G}$ has a basis of element of the form $\kn{m}{g}$ with  $m = \prod_{i=1}^M m_i$, where $m_i$ is a monomial in $\QQQ_{W_i}$.  For any $g\in G$ we can also write $g = \dseo{g_i}{i}$, where $g_i$ acts trivially except on $W_i$.  Hereafter, when  any element of $\HH_{W,G}$ is written in this form, it will mean that the element is as described above.  Similar remarks apply to the B-model.
\end{notat}

\subsubsection{The maximal unprojected state space}
For any quasi-homogeneous, non-degenerate, invertible polynomial $W$ and any  group $H\le\SL_N$ of diagonal symmetries of $W$, the projected state space $\BB_{W\tr,H}$ is a subspace of the unprojected vector space $\uBB_{W\tr,\Gmax_{W\tr}}$.  Moreover, since the definition of B-model multiplication makes no reference to the orbifold group, but only to the group elements involved, the multiplication makes sense on all of $\uBB_{W\tr,\Gmax_{W\tr}\cap \SL_N}$.  This gives us the following lemma. 
\begin{lem}\label{lem:b-change-group}
 For any diagonal symmetry group $H \le \SL_N$  of $W\tr$, the subspace $\BB_{W\tr, H}$ is  a subalgebra of $\uBB_{W\tr,\Gmax_{W\tr}\cap \SL_N}$.  

If $B$ is a subalgebra of $\BB_{W\tr, H}$ that is invariant under another symmetry group $H'\le \SL_N$, and if $B$ contains only sectors with group elements in $H'$, then $B$ is also a subalgebra of $\BB_{W\tr, H'}$.
\end{lem}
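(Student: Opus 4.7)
The plan is to observe that the B-model multiplication rule of \autoref{Bprod} is written purely in terms of the pair of input sector labels $g, h$ together with the ring homomorphisms $e_g$, $e_h$, and $e_{g+h}^*$ derived from $W\tr$, and makes no reference to an ambient orbifold group. The same formula therefore defines a multiplication on the full unprojected space $\uBB_{W\tr,\Gmax_{W\tr}\cap\SL_N}$, and the associativity argument of \cite{krawitz}, applied with $\Gmax_{W\tr}\cap\SL_N$ in place of $G$, shows this multiplication is associative. Both halves of the lemma will then be deduced from the fact that $\BB_{W\tr, H}$ and $\BB_{W\tr, H'}$ embed as graded vector subspaces of this ambient algebra with mutually compatible multiplications.

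For the first claim, I would check that $\BB_{W\tr,H}$ is closed under the ambient product. Since $H$ is a group, the product of sectors indexed by $g, h \in H$ lands in sector $g+h \in H$, so the support condition on sectors is immediate. To see that $H$-invariance is preserved, I note that the projections $e_g$ and $e_h$ (which set to zero the variables not fixed by $g$ or $h$) commute with the diagonal $H$-action on the Milnor rings, while $e_{g+h}^*$ is built from such projections together with the residue pairings on $\QQQ_{g,h}$ and $\QQQ_{g+h}$. The main obstacle here is verifying that $e_{g+h}^*$ is $H$-equivariant, and this is where the hypothesis $H \le \SL_N$ is essential: it guarantees that the relevant volume forms transform trivially under $H$, and hence that the residue pairings used to define $e_{g+h}^*$ are $H$-invariant. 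Granting that, the product of two $H$-invariants is $H$-invariant, establishing the first claim.

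For the second claim, the two hypotheses on $B$ --- that its sectors are indexed by elements of $H'$ and that it is $H'$-invariant --- say exactly that $B$ is a graded vector subspace of $\BB_{W\tr, H'} = \bigl(\bigoplus_{g \in H'} \uBB_g\bigr)^{H'}$. The multiplications on $\BB_{W\tr, H}$ and on $\BB_{W\tr, H'}$ are both restrictions of the ambient product on $\uBB_{W\tr,\Gmax_{W\tr}\cap\SL_N}$, so they agree on $B$. Combining this with the hypothesis that $B$ is closed under the product inherited from $\BB_{W\tr, H}$, I conclude that $B$ is a subalgebra of $\BB_{W\tr, H'}$.
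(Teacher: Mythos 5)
Your overall strategy is the same as the paper's: the product of \autoref{Bprod} refers only to the group elements labelling the sectors, so it defines an (associative, by Krawitz) multiplication on all of $\uBB_{W\tr,\Gmax_{W\tr}\cap\SL_N}$, and both halves of the lemma reduce to the observation that the products on $\BB_{W\tr,H}$ and $\BB_{W\tr,H'}$ are restrictions of this one ambient product; your treatment of the second claim is correct as stated. The paper in fact offers no more proof than this observation, leaning implicitly on the $G$-Frobenius-algebra property of the unprojected product cited from Kaufmann and Krawitz.

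The step that fails as written is your justification of closure in the first claim. You assert that $H\le\SL_N$ guarantees that ``the relevant volume forms transform trivially'' and hence that the residue pairings on $\QQQ_{g,h}$ and $\QQQ_{g+h}$ entering $e^*_{g+h}$ are $H$-invariant. That is false in general: the $\SL_N$ condition only makes the determinant of the action on all of $\CC^N$ equal to $1$; its restriction to $\Fix(g)\cap\Fix(h)$ or to $\Fix(g+h)$ can have nontrivial determinant. For instance, for $W\tr=x^3+y^3+z^3$, $h'=(1/3,1/3,1/3)\in\SL_3$, $g=(1/3,2/3,0)$ and $h=(2/3,1/3,0)$, one has $\QQQ_{g,h}=\CC[z]/(z^2)$, and the pairing $\langle 1,z\rangle$ is rescaled by a primitive cube root of unity $\omega$ under $h'$, so neither the restricted volume form $dz$ nor that residue pairing is invariant. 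The statement you actually need is that $\Gmax_{W\tr}\cap\SL_N$ acts on the unprojected space by algebra automorphisms: the determinant twists of $\eta^{\flat}_{g,h}$ and $\eta^{\sharp}_{g+h}$ cancel against the determinants by which $h'$ acts on the sector volume forms of $\uBB_g$, $\uBB_h$, $\uBB_{g+h}$, using the condition $I_g\cup I_h\cup I_{g+h}=\{1,\dots,N\}$ together with $\sum_j\Theta_j\in\ZZ$. (In the example, $\kn{1}{g}\bstar\kn{1}{h}$ is a multiple of $\kn{xy}{0}$, and the factor $\omega^2$ picked up by $xy$ exactly matches the factor $\omega\cdot\omega$ coming from the two twisted-sector volume forms.) This equivariance is precisely the $G$-Frobenius-algebra property the paper invokes; once you cite or prove it, the product of $H$-invariants is $H$-invariant and the rest of your argument goes through.
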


\begin{remark}\label{Amasterstatespace}
On the A-side, we have a similar situation at the level of vector spaces: for any non-degnerate $W$ and admissible $G$,  the projected state space $\HH_{W,G}$ is a subspace of the unprojected vector space $\uHH_{W,\Gmax}$.  There is, however, no obvious algebra structure on the unprojected space $\uHH_{W,\Gmax}$ that would project to give the algebra structure on every $\HH_{W,G}$.   
\end{remark}

\subsubsection{Properties of Atomic Types}

Assume that $W$ is one of the atomic types with vector of exponents $\ba$, as in \autoref{atomictypes}.  To define the mirror map we need to define the \emph{Chain Property}. 
\begin{property}\label{chainproperty}
If $W$
is a chain 
we say that a vector $\bbeta$ has the \emph{Chain Property for $W$} if $\bbeta = (a_1-1, 0 , a_3-1, 0, \dots, a_{2k-1}-1, 0, \beta_{2k+1}, \beta_{2k+2},\dots,\beta_N)$, where $\beta_{2k+1} \neq a_{2k+1}-1$,
 and $\beta_{i} \leq a_{i}-1$ for all $i>2k$.  In other words, if $s\in \{1,2,\dots,N\}$ is the smallest index for which
$\beta_s \neq \delta_\text{odd}^s (a_s -1)$, then $s$ is odd.
\end{property}

The following lemma from \cite{kreuzer} gives a description of the Milnor ring of an invertible, non-degenerate polynomial.
\begin{lem}\label{lem:milnor}
\mbox{}
\begin{enumerate}
    \item The Milnor ring of a loop type polynomial is generated over $\CC$ by the basis 
    $\{\prod_{i=1}^N X_i^{\beta_i} : 0 \le \beta_i \le a_i -1\}$ and has dimension 
    $\mu_W=\prod_{i=1}^N a_i$. 
    \item\label{lem:chainmilnor}  The Milnor ring of a chain type polynomial is generated over $\CC$ by the basis
    $ \{\prod_{i = 1}^n X_i^{\beta_i} : 0 \le \beta_i \le a_i-1\},$ where the vector $\bbeta$ has the Chain \autoref{chainproperty}.
    This Milnor ring has dimension $\mu_{W} = \sum_{i = odd}(a_i - 1) \prod_{j  =i+1}^n a_j$.
    \item The Milnor ring of a Fermat type polynomial is generated over $\CC$ by the basis $\{X^{\beta} : 0 \le \beta \le a_i -2\}$ and has dimension $\mu_W=a-1$.
\end{enumerate}
In each case, define $\expon = \M\tr\one - \one$, then the monomial $\X^{\expon-\one}$ is of top degree, so the Hessian is a scalar multiple of $\X^{\expon-\one}$.
\end{lem}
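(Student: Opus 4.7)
The plan is to treat the three atomic types separately, using direct manipulation of the Jacobian ideal to identify a spanning set of monomials and then confirming the dimension via the standard formula $\mu_W = \prod_{i} (1-q_i)/q_i$ for a quasi-homogeneous isolated singularity with weights $q_i$. The weighted-degree statement about $\X^{\expon-\one}$ is independent of the three cases and can be handled uniformly at the end.

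For the Fermat case, $\partial W/\partial X = aX^{a-1}$, so $\operatorname{Jac}(W) = (X^{a-1})$ and the basis $\{1, X, \dots, X^{a-2}\}$ is immediate, giving $\mu_W = a-1$. For the loop case, the partials are $\partial_i W = X_{i-1}^{a_{i-1}} + a_i X_i^{a_i-1} X_{i+1}$ with indices cyclic. First, I would show that every monomial can be rewritten, modulo $\operatorname{Jac}(W)$, as a combination of monomials with $\beta_i \le a_i-1$, by repeatedly substituting $X_{i-1}^{a_{i-1}} \equiv -a_i X_i^{a_i-1} X_{i+1}$ and verifying that each substitution strictly decreases a suitable well-founded measure (e.g., lexicographic in the multi-index of exponents at or above threshold). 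Then I would count: the weights of a loop satisfy $a_i q_i + q_{i+1} = 1$, so $\prod_i (1-q_i)/q_i = \prod_i a_i q_{i+1}/q_i = \prod_i a_i$ by telescoping. Since the proposed spanning set has $\prod a_i$ elements and spans a space of dimension $\prod a_i$, it is a basis.

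For the chain case, I would proceed by induction on $N$. The base case $N=1$ is Fermat. For the inductive step, the extreme partials $\partial_1 W = a_1 X_1^{a_1-1}X_2$ and $\partial_N W = X_{N-1}^{a_{N-1}} + a_N X_N^{a_N-1}$ give the two key relations: $X_1^{a_1-1}X_2 \equiv 0$ and $X_N^{a_N-1} \equiv -a_N^{-1} X_{N-1}^{a_{N-1}}$. The intermediate partials let us reduce $X_{i-1}^{a_{i-1}}$ to a scalar multiple of $X_i^{a_i-1}X_{i+1}$ for $1 < i < N$. I would then split the analysis by $\beta_1$: if $\beta_1 < a_1-1$, then $X_2$ is unconstrained at first and we inductively apply the Chain Property to the tail starting at $X_2$; if $\beta_1 = a_1-1$, then $\beta_2 = 0$ (by the first relation) and the Chain Property is satisfied with $s=1$, contributing the rest of the basis. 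Adding the two contributions yields $\mu_W = \sum_{i\text{ odd}}(a_i-1)\prod_{j>i} a_j$. Linear independence can be checked by verifying that each such monomial has a distinct weighted degree paired with a nonvanishing residue, or by matching the count against $\prod_i (1-q_i)/q_i$ using the chain recursion $a_i q_i + q_{i+1} = 1$, $a_N q_N = 1$.

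Finally, for the Hessian claim: since $\M \bq = \one$, the monomial $\X^{\expon-\one} = \X^{\M\tr\one - 2\one}$ has weighted degree $\one\tr \M \bq - 2\, \one\tr\bq = N - 2\sum q_i = \hat c$, matching the central charge. In each atomic type, one checks directly that $\X^{\expon-\one}$ belongs to the basis produced above (so is nonzero in $\QQQ_W$) and that the one-dimensional top-degree subspace of $\QQQ_W$ is spanned by it; hence $\hess(W)$, being itself top-degree and nonzero, is a scalar multiple of $\X^{\expon-\one}$. The main obstacle is the bookkeeping for the chain case: the Chain Property is awkward to state and verify by brute force, and the reduction process must be shown to terminate without missing relations. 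I expect the cleanest route is the induction on $N$ outlined above, leaning on the weight-based dimension formula to confirm that the spanning set is actually a basis rather than attempting a direct independence argument monomial-by-monomial.
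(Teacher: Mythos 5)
You are supplying an argument where the paper supplies none: the paper simply quotes this lemma from \cite{kreuzer}, so there is no internal proof to compare against, and your proposal must stand on its own. It does not, because the one genuinely nontrivial step --- that the restricted monomials span $\QQQ_W$ --- rests on a termination claim that is false. For a loop, the rewriting rule $X_{i-1}^{a_{i-1}} \equiv -a_i X_i^{a_i-1}X_{i+1}$ admits no well-founded strictly decreasing measure, lexicographic or otherwise, because the rewriting genuinely cycles on exponent vectors. Concretely, for the two-variable loop $W = X^aY + Y^bX$ the Jacobian relations are $X^a \equiv -b\,XY^{b-1}$ and $Y^b \equiv -a\,X^{a-1}Y$, and one computes $X^aY^{b-1} \equiv -b\,XY^{2b-2} \equiv ab\,X^aY^{b-1}$: after two substitutions you return to the very monomial you started from, so any measure that strictly decreases at each substitution would satisfy $m(a,b-1) > m(1,2b-2) > m(a,b-1)$. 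The actual content of the spanning statement is precisely the resolution of such cycles: the accumulated scalar (here $ab$, in general a product of the $\pm a_i$) is never $1$, so the cycling monomial must vanish, while in the even-variable loop the ``half-cycle'' monomials such as $\prod_{i \text{ odd}}X_i^{2a_i-2}$ do not vanish but land on a multiple of $\X^{\expon-\one}$ (this is exactly the paper's \autoref{lem:2a-2}). None of this bookkeeping is in your proposal, and the chain case has the same difficulty plus the relation $X_1^{a_1-1}X_2 \equiv 0$, which kills monomials outright; you flag termination as ``the main obstacle,'' but as written the plan cannot be carried out.

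The remaining ingredients are essentially fine, modulo bookkeeping. Using the standard formula $\mu_W = \prod_i(1-q_i)/q_i$ to upgrade a spanning set of the right cardinality to a basis is legitimate (though the loop relation is $a_iq_i + q_{i+1} = 1$, i.e.\ $1-q_{i+1} = a_iq_i$, not $1-q_i = a_iq_{i+1}$; the telescoping still yields $\prod_i a_i$). In the chain count your two cases are swapped: $\beta_1 \neq a_1-1$ is the $s=1$ case, with the tail constrained only by $\beta_j \le a_j-1$, while $\beta_1 = a_1-1$ forces $\beta_2 = 0$ and the recursion should proceed on the chain in $X_3,\dots,X_N$ (stepping by two), which is what produces $\sum_{i\text{ odd}}(a_i-1)\prod_{j>i}a_j$. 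The final Hessian claim is correct as you argue: $\X^{\expon-\one}$ has weighted degree $\hat c$, it lies in the asserted basis for each atomic type (for the loop $\expon-\one = \ba-\one$, for the chain $(a_1-2,a_2-1,\dots,a_N-1)$, which satisfies the Chain \autoref{chainproperty}), and the paper's Section~\ref{sec:qpoly} already records that the top graded piece is one-dimensional and spanned by $\hess(W)$ --- but of course this part only becomes available once the basis statement itself is actually proved.
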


To define the mirror map, we will also need the following lemma which follows trivially from \cite{kreuzer}.
\begin{lem}\label{lem:write-group} \label{lem:write-sym-groups}
    Any non-trivial diagonal symmetry of an atomic invertible singularity $W$ can be written uniquely in the form 
\begin{equation}\label{niceform}
          J+\sum_{i=1}^N \rr_i \rho_i, \quad\text{ or equivalently } \quad \M_W^{-1}(\brr + \one),
\end{equation}
where the vector $\brr$ satisfies the following constraints:
\begin{enumerate}
 \item\label{item:write-loop-group} 
    If $W$ is a loop polynomial, then $0 \le \rr_i \le a_i -1$.
  Furthermore,  if $N$ is even, then the identity element can be written as 
    $$
        0 =  J+\sum_{i \text{ even}} ({a_i-1})\rho_i= J+\sum_{i \text{ odd}} ({a_i-1})\rho_i.
    $$
    (If $N$ is odd, then the identity cannot be written in this form.)
    
    \item\label{item:write-chain-group}  If $W$ is a chain polynomial, then 
        $\brr$ satisfies the Chain \autoref{chainproperty}. The identity element can be written
    $$
        0 = J + \sum_{n-i \text{ even}} ({a_i-1})\rho_i.
    $$
    \item\label{item:write-fermat-group}  If  $W$ is a Fermat polynomial, then 
    $N=1$ and $\brr = (\rr)$ with $0\le \rr \le a-2$.
\end{enumerate}
\end{lem}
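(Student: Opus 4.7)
The plan is to identify $G^{\max}$ with the finite abelian group $\ZZ^N / \M_W \ZZ^N$ and argue combinatorially there.  Because $\rho_j = \M_W^{-1}\mathbf{e}_j$ and $J = \M_W^{-1}\one$ in $(\QQ/\ZZ)^N$, the two displayed forms $J + \sum_i r_i \rho_i$ and $\M_W^{-1}(\brr + \one)$ are literally equal modulo $\ZZ^N$.  Since the $\rho_j$ generate $G^{\max}$ by the proposition just above, the map $\mathbf v \mapsto \M_W^{-1}\mathbf v \pmod{\ZZ^N}$ descends to an isomorphism $\ZZ^N / \M_W\ZZ^N \xrightarrow{\sim} G^{\max}$ carrying $\mathbf{e}_j + \M_W\ZZ^N$ to $\rho_j$.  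So the lemma reduces to the combinatorial claim: for each atomic type, the translates $\brr + \one$ as $\brr$ runs over the prescribed set land once in each non-identity coset of $\M_W\ZZ^N$, with the listed identity relations accounting for any overcounting.

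The Fermat case is immediate: $\M_W = (a)$ and $\{r+1 : 0 \le r \le a-2\}$ is a complete set of nonzero residues modulo $a$.  For the loop case, I would write the columns of $\M_W$ as $\mathrm{col}_1 = a_1\mathbf{e}_1 + \mathbf{e}_N$ and $\mathrm{col}_i = \mathbf{e}_{i-1} + a_i\mathbf{e}_i$ for $i \ge 2$.  When $N$ is even a position-by-position check yields the telescoping identities $\sum_{i \text{ even}} \mathrm{col}_i = \one + \sum_{i \text{ even}}(a_i-1)\mathbf{e}_i$ and $\sum_{i \text{ odd}} \mathrm{col}_i = \one + \sum_{i \text{ odd}}(a_i-1)\mathbf{e}_i$, exhibiting the two identity representations listed in part \ref{item:write-loop-group}; for $N$ odd no analogous column sum produces $\one$ alone, explaining the parenthetical.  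Comparing $|G^{\max}| = |\det \M_W| = |\prod_i a_i - (-1)^N|$ with the box size $\prod_i a_i$ then forces injectivity on the remaining non-identity classes.

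For the chain, $\M_W$ is upper bidiagonal with $\mathrm{col}_1 = a_1\mathbf{e}_1$ and $\mathrm{col}_i = \mathbf{e}_{i-1} + a_i\mathbf{e}_i$ for $i \ge 2$.  A bottom-up argument shows any two box vectors are inequivalent modulo $\M_W \ZZ^N$: writing $\brr - \brr' = \sum c_i \mathrm{col}_i$, the last coordinate reads $a_N c_N = r_N - r'_N$, and since $|r_N - r'_N| < a_N$ this forces $c_N = 0$; descending one coordinate at a time forces all $c_i = 0$, so $\brr = \brr'$.  Thus the $\prod_i a_i$ box vectors are pairwise inequivalent and, since $|G^{\max}| = \prod_i a_i$, represent every element of $G^{\max}$ exactly once.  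The identity representation $\one + \sum_{N - i \text{ even}}(a_i-1)\mathbf{e}_i = \sum_{i : N - i \text{ even}} \mathrm{col}_i$ is then a direct coordinate check: at position $j$ with $N - j$ even we pick up $a_j$ from $\mathrm{col}_j$, and at position $j$ with $N - j$ odd and $j < N$ we pick up $1$ from $\mathrm{col}_{j+1}$.  One finally characterizes the non-identity cosets as those represented by Chain-Property vectors via a parity-of-$N$ case analysis on the position of the first disagreement with the pattern $(a_1-1, 0, a_3-1, 0, \ldots)$.

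The main obstacle is the last step of the chain case: aligning the box-minus-identity set with the Chain-Property set requires careful bookkeeping of how the pattern interacts with $N$ even versus $N$ odd, which is where the Chain Property earns its keep.  The Fermat and loop cases, and the bulk of the chain case, essentially reduce to linear-algebra computations on the column lattice of $\M_W$.
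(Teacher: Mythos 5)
Your reduction to the lattice $\ZZ^N/\M\ZZ^N$, the Fermat case, and the verification of the displayed identity relations are all sound; note also that the paper gives no argument of its own for this lemma (it is attributed to \cite{kreuzer}), so everything rests on the lattice combinatorics you sketch, and that is where there are genuine gaps. For loops, the assertion that comparing $|\Gmax|=|\prod_i a_i-(-1)^N|$ with the box size $\prod_i a_i$ ``forces injectivity'' is a non sequitur: the box $\{1\le v_i\le a_i\}$ is not a fundamental domain for $\M\ZZ^N$, so counting alone gives neither injectivity away from the identity fibre nor surjectivity onto the nontrivial classes. Indeed for $N$ even injectivity on the box is \emph{false} (your two identity representatives collide), so the actual claim to prove is that this is the only coincidence; that requires determining all $\bn\in\ZZ^N$ with $\M\bn$ landing in the relevant box, i.e.\ a cyclic sign-alternation and growth argument of exactly the kind the paper carries out in \autoref{lem:srAn} for the range $2\le v_i\le 2a_i$. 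The same analysis is what proves the parenthetical claim for odd $N$: ``no analogous column sum produces $\one$'' only rules out $0/1$ combinations of columns, not arbitrary elements of $\M\ZZ^N$ meeting the box.

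For chains, your triangular bottom-up argument does show that the full box $\{0\le r_i\le a_i-1\}$ represents every element of $\Gmax$ exactly once, and your identity representative is correct; but the step you defer---aligning ``box minus the identity representative'' with the Chain-Property vectors---is not bookkeeping, and it cannot be completed as you envision. The Chain-Property vectors index the monomial basis of $\QQQ_W$ (\autoref{lem:milnor}), and their number is in general smaller than $\prod_i a_i-1$. Concretely, for $W=X_1^2X_2+X_2^3$ the nontrivial symmetry $[(2/3,2/3)]$ has unique box representative $\brr=(1,1)$, which violates the Chain \autoref{chainproperty} (first disagreement with $(a_1-1,0)$ occurs at the even index $2$), while the Chain-Property vector $(0,2)$ represents the identity. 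So no parity analysis of the first disagreement can turn the box bijection into the stated statement; what your argument actually yields for chains is uniqueness of the box normal form $\M_W^{-1}(\brr+\one)$ with $0\le r_i\le a_i-1$, with the Chain Property singling out \emph{which} symmetries admit a representative of that special shape, and to finish you would have to engage with that discrepancy (i.e.\ with how the chain clause is actually used in the mirror construction) rather than defer it.
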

Hereafter, all elements of the Milnor rings and  groups
 associated to Fermat, loop, and chain type polynomials will be  written in the  forms described above.

\subsection{The mirror map}
In this section we will define the mirror map.  We do this first on some basic pieces we call \emph{fundamental factors} and then use these to define it in general.

\subsubsection{Fundamental factors}\label{fundamentalmmap}

If a polynomial is atomic, it cannot be written as a disconnected sum of other polynomials, and \autoref{property} implies a stricter property, which we call \emph{fundamental}.
\begin{defn}\label{def:fund}
We say that a pair $(W,G)$ is \emph{fundamental} if 
\begin{enumerate}
\item For every non-trivial $g\in G\tr$ the sector $\BB_g$ of $\BB_{W\tr,G\tr}$ is either narrow  (see \autoref{defnarrow}) or is empty. 
\item For every non-trivial $h\in G$ the sector  $\HH_h$ of $\HH_{W,G}$ is either narrow or is empty.
\end{enumerate}
\end{defn}
We construct the mirror map $\mmap:\BB_{W\tr,G\tr} \rightarrow  \HH_{W,G}$ for all fundamental pairs $(W,G)$ in two parts, as follows.  

Firstly, the untwisted sector $\BB_0 \subseteq \BB_{W\tr,G\tr}$ is a subalgebra of the untwisted sector of the unprojected algebra $\uBB_{0} \cong \QQQ_{W\tr}$,  and Krawitz's unorbifolded algebra mirror map \cite[\S3.1]{krawitz} gives an isomorphism of this sector to the sum of the A-model twisted sectors (all narrow), $\bigoplus_{h\in \Gmax_W} \HH_{h} \subseteq \HH_{W,\Gmax_W}$.   Our mirror map is a rescaling of that isomorphism, given on the  algebra generators $\kn{Y_j}{0}$ by
\begin{equation}
	\kn{Y_{j}}{0} \mapsto \rcb^{\wb_j} 
	\kn{1}{\rho_{j} + J} \label{eq:mymm2},
\end{equation}
where $\wb_j$ is the integer weight of $Y_j$ as defined in Equation~\eqref{intweights}, and where $\rcb\in \CC$ is a constant chosen such that  
\begin{equation}\label{rescaledef-b}
\rcb^{\bwb \cdot \expontr }= \left<\kn{\Y^{\expontr}}{0},\kn{1}{0}\right> = \frac{\mu_{W\tr} \Y^{\expontr}}{\hess(W\tr)}
\end{equation}
Also, $\mmap$ takes the identity  $\kn{1}{0} \in \BB_{W\tr,G\tr}$ to the identity  $\ringid =\kn{1}{J}\in\HH_{W,G}.$ 

Secondly, when $g\in G\tr$ is non-trivial, the sector must be narrow or empty.  If we write $W$ as a decoupled sum of atomic types, then by \autoref{lem:write-sym-groups} we may write $g$ uniquely as  $g = (\M\tr)^{-1}(\brr + \one)$, with the components of $\brr$ satisfying the constraints of that lemma, as determined by the corresponding atomic types.  The direct sum of all the twisted sectors $\bigoplus_{g\neq 0 \in G\tr} \BB_g$ is spanned by elements of the form $\kn{1}{g}$.
We define the mirror map by 
\begin{equation}
	\kn{1}{(\M_W\tr)^{-1}(\brr + \one)} \mapsto  
	\rca^{-\bwa\cdot\brr} 
	\kn{\X^{\brr}}{0}, \label{eq:mymm1}
\end{equation}
where $\wa_j$ is the integer weight of $X_j$ as defined in Equation~\eqref{intweights}, and where $\rca\in \CC$ is a constant chosen such that  
\begin{equation}\label{rescaledef-a}
\rca^{\bwa \cdot \expon }= \left<\kn{\X^{\expon}}{0},\kn{1}{0}\right> = \frac{\mu_{W} \X^{\expon}}{\hess(W)}
\end{equation}
\begin{remark} \label{rem:agrees}
The mirror map on the twisted sectors is a rescaling of Krawitz's vector space mirror map. The lemmas in \cite{krawitz} show that the map \eqref{eq:mymm2} on the untwisted sector agrees with the vector space mirror map, except possibly in the case of an even variable loop.  In this case, there is some ambiguity in the definition of the vector space mirror map on a certain two dimensional subspace.  Krawitz made an arbitrary choice to prove the vector space isomorphism, and the algebra mirror map \eqref{eq:mymm2} should, in principle, resolve this ambiguity.  However, \eqref{eq:mymm2} is given in terms of algebra generators, and we have not been able to compute the image of the vector space basis elements in question.  It is possible to check, however, that the map is surjective.  Our map agrees with Krawitz's vector space map, up to a possibly different choice on the ambiguous subspace, so it is still a bijection.  Thus, it only remains to check that $\mmap$ is a ring homomorphism, which we do in   \autoref{sec:ainc}.
\end{remark} 
 
\subsubsection{Decoupled sums}

Let $W = \sum_{i=1}^M W_i$ be a decoupled sum of non-degenerate, invertible polynomials, and let $G = \bigoplus_{i=1}^M G_i$, where each $G_i$ is an admissible group of symmetries of $W_i$.  If the mirror map is defined for each of the pairs $(W_i,G_i)$ 
$$\mmap_i :\BB_{W_i\tr, G_i\tr} \rightarrow  \HH_{W_i, G_i},$$
then the mirror map for the decoupled sum is just given by the tensor product:
$$\mmap : \BB_{W,G} \cong \bigotimes_i \BB_{W_i\tr, G_i\tr} \stackrel{\otimes_i \mmap_i }{\longrightarrow} \bigotimes_i \HH_{W_i, G_i}  \cong \HH_{W,G}.$$

\subsubsection{The general case} \label{sec:new-mm}
We now wish to define the mirror map $\mmap$ for the general case of $(W,G)$ satisfying \autoref{property}, where $G$ is not necessarily a direct sum of groups.

\begin{notat} \label{not:g_I}
Let $I$ a subset of $\{1, \dots, M\}$.   Then we let 
\begin{equation}\label{subInota}
	 g_I = \dseo{g_i}{i \in I} \dsand
	 m_I = \prod_{i \in I} m_i \dsand
	 W_I\tr = \sum_{i \in I} W_i\tr.
\end{equation}
\end{notat}

 The algebra $\BB_{W\tr,G\tr}$ is spanned by elements of the form $\kn{\prod m_i}{\dse {g_i}}$.  For each such element, we will find a partition $\II$ of $\{1,2,\dots,M\}$ and for each $I\in\II$ we will find groups $G_I$ of symmetries of $W_I$ so that each pair $(W_I,G_I)$ is fundamental and $\kn{\prod m_i}{\dse {g_i}} \in \bigoplus_{I\in\II}\BB_{W_I\tr,G_I\tr}$ (and which satisfy a few other properties).  The previous results allow us to define the mirror map on the decoupled sum $\bigoplus_{I\in\II}\BB_{W_I\tr,G_I\tr}$, and we can show that the image actually lies in $\HH_{W,G}$.  We will show that the mirror map thus defined is independent of the choices and is a homomorphism of Frobenius algebras.

\begin{defn} \label{def:nicely}
Let $\II$ be a partition of $\{1,2,\dots,M\}$ and let $\{G_I \}_{I\in\II}$ be admissible groups of symmetries of $W_I$. 

For any $\kn{\prod m_i}{\dse{ g_i}}\in \BB_{W\tr,G\tr}$ we say that  \emph{$\kn{\prod m_i}{\dse{ g_i}}$ splits nicely} with respect to $\II$ and $\{G_I\tr\}$ if the following properties are satisfied:
\begin{enumerate}
	\item For every $I\in \II$ the pair $(W_I, G_I)$ is fundamental.
	\item $\kn{m_I}{g_I} \in \BB_{W_I\tr,G_I\tr}$. \label{nice:inBB}
	\item The element $g_I$ is either trivial, or acts non-trivially on all $W_i$ for $i \in I$. \label{nice:triv-or-not}
	\item If $g_I = 0$ and $m_I \neq 1$, then $|I| = 1$, i.e., $W_I$ = $W_j$ for some $j$. \label{nice:split-id}
\end{enumerate} 
\end{defn}

\begin{lem}\label{allsplit}
For any element $\gamma = \kn{\prod m_i}{\dse {g_i}} \in \BB_{W\tr,G\tr}$ there exists a partition $\II$ and groups $\{G_I\}$ so that $\gamma$ splits nicely with respect to $\II, \{G_I\}$.
\end{lem}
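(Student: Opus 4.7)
The plan is to construct $\II$ and $\{G_I\}$ explicitly using the support of the group part of $\gamma$. Write $\gamma=\kn{\prod_i m_i}{\sum_i g_i}$ in accordance with \autoref{not:split}, and set $I_*:=\{i\in\{1,\ldots,M\}:g_i\neq 0\}$. Since $\gamma\in\BB_g$ implies $\BB_g\neq\emptyset$, \autoref{property}(3) applied to $g\in G\tr$ forces each $g_i$ with $i\in I_*$ to fix no variable of $W_i\tr$; the restriction of $W_i\tr$ to the corresponding fixed subspace is therefore the zero polynomial, and hence $m_i$ must be a nonzero scalar for every $i\in I_*$. I propose the partition $\II:=\{I_*\}\cup\{\{j\}:j\notin I_*\}$, omitting the block $I_*$ if it is empty.

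With this partition, conditions (3) and (4) of \autoref{def:nicely} are immediate: for the block $I_*$, $g_{I_*}$ acts non-trivially on every $W_i$ with $i\in I_*$, so (3) holds, and (4) is vacuous since $g_{I_*}\neq 0$; for each singleton $\{j\}$, $g_{\{j\}}=0$ satisfies (3), and $|\{j\}|=1$ makes (4) vacuous. For condition (2), each $G_I\tr$ must be chosen large enough to contain $g_I$ and to leave $m_I$ invariant. A natural candidate for the block $I_*$ is the cyclic group $\langle g_{I_*}\rangle$, which lies in $\SL$ automatically since $g=g_{I_*}\in G\tr\subseteq\SL$; invariance of $m_{I_*}$ is automatic because it is a scalar. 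For each singleton $\{j\}$, take $G_{\{j\}}\tr$ to be a suitable admissible subgroup of $\Gmax_{W_j\tr}$ under which the polynomial $m_j$ is invariant.

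The principal obstacle is condition (1), fundamentality of $(W_I,G_I)$. The plan is to use \autoref{property} on $(W,G)$ to control how the groups act: by choosing $G_I\tr$ to be built from those elements of $G\tr$ whose support is contained in $I$ (restricted to the $I$-coordinates), every non-trivial element of $G_I\tr$ with non-empty sector inherits from \autoref{property}(3) the structure of acting in an all-or-none fashion on each piece $W_i$ with $i\in I$. Such an element is narrow on $W_I\tr$ whenever it is non-trivial on every constituent piece, and I expect the remaining broad sectors to be forced to be empty by the additional invariance constraints coming from the ambient group. The A-side half of fundamentality follows symmetrically from \autoref{property}(2) together with the duality $(G\tr)\tr=G$. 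The most delicate technical point is ruling out invariants in the mixed broad sectors; I anticipate this relies on a case analysis exploiting the atomic-type classification of \autoref{atomictypes} together with the flexibility afforded by \autoref{lem:b-change-group}, which lets us regard the relevant subalgebras inside a common unprojected master state space.
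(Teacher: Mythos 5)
Your partition and your group on the non-trivial block coincide exactly with the paper's proof: the paper also sets $I_g=\{i : g_i\neq 0\}$, takes $\II=\{I_g\}\cup\{\{i\}\}_{i\in I_0}$, and chooses $G_{I_g}\tr=\langle g_{I_g}\rangle$; your observation that \autoref{property}(3) forces $m_i$ to be a scalar for $i\in I_*$ is also correct, as are your checks of conditions (3) and (4) of \autoref{def:nicely}. The genuine gap is that you never finish --- or even fix the data for --- the one substantive condition, fundamentality (condition (1)), together with condition (2) for the singleton blocks. For the singletons you only say ``take $G_{\{j\}}\tr$ to be a suitable admissible subgroup of $\Gmax_{W_j\tr}$ under which $m_j$ is invariant'' (admissibility is in any case the A-side requirement; on the B-side the constraint is lying in $\SL$), and later you float a different, larger choice, namely all elements of $G\tr$ supported in $I$. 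Neither is pinned down, and for the larger choice you would have to verify both that $m_j$ stays invariant (condition (2)) and that every additional non-trivial element has a narrow or empty sector (condition (1)); you explicitly defer this (``I expect\dots'', ``I anticipate\dots''), yet that deferred statement is the entire content of the lemma. The paper avoids all of this by taking $G_{\{j\}}\tr$ to be the \emph{trivial} group on each singleton block (equivalently $G_{\{j\}}=\Gmax_{W_j}$ on the A-side): then condition (2) is automatic, condition (1) is vacuous on the B-side, and what remains is exactly what \autoref{property} supplies, which is why the paper can dismiss the verification as clear.

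A second, related soft spot: your route to narrowness conflates two different non-emptiness conditions. \autoref{property}(3) gives the all-or-none behaviour only for elements of $G\tr$ whose sector in the \emph{ambient} space $\BB_{W\tr,G\tr}$ is non-empty, whereas fundamentality of $(W_I,G_I)$ concerns sectors of $\BB_{W_I\tr,G_I\tr}$, where the invariance constraint comes from the smaller group $G_I\tr$ alone; a sector can be empty upstairs (so \autoref{property} is silent) yet non-empty and broad downstairs. This is precisely what your ``the remaining broad sectors are forced to be empty'' would have to rule out, and it is hardest for the enlarged groups you propose and most tractable for the paper's minimal choices ($\langle g_{I_g}\rangle$ on the support block, trivial groups elsewhere). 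As written, then, the proposal reproduces the paper's construction but leaves its only nontrivial verification as an expectation rather than an argument.
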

\begin{proof}
Let $I_0$ be the set of $i$ so that $g_i = 0$.  Let $I_g$ be the set of $i$ with $g_i \neq 0$. Then we choose the partition
\[
	\II = \{I_g\} \cup \{\{i\}\}_{i \in I_0}.
\]  
We choose the group $G_{I_g}\tr$ for $W_{I_g}\tr$ to be the group generated by $g_{I_g}$, and we choose the trivial groups for the others.  In light of \autoref{property}, it is clear that these groups satisfy the properties described in \autoref{def:nicely}. 
\end{proof}

For each $\II,\{G_I\}$ with each $(W_I,G_I)$ fundamental, define the mirror map $$\mmap_\II:\BB_{\sum_{I\in \II} W\tr_I,\bigoplus_{I\in II} G\tr_I} \cong \bigotimes_{I\in\II} \BB_{W\tr_I,G\tr_I} \rightarrow  \bigotimes_{I\in\II} \HH_{W\tr_I,G\tr_I} \cong  \HH_{\sum_{I\in\II}W_I,\bigoplus_{I\in\II}G_I}.$$
Given a $\II, \{G_I\}$ such that $\gamma$ splits nicely,  define the sub-algebra 
$$B_\II :=\BB_{W\tr,G\tr}\cap \BB_{\sum_{I\in\II}W\tr_I,\bigoplus_{I\in\II}G\tr_I}$$
and  apply the map $\mmap_\II$ to $B_\II$.   We observe from the definition of $\mmap$ that the image  $\mmap_\II(B_\II)$ lies in the vector space $\HH_{W,G}$. 

\autoref{cor:change-group-product} 
states that the algebra structure induced on the vector space $\HH_{W,G} \cap \HH_{\sum_{I\in\II}W_I,\bigoplus_{I\in\II}G_I}$ from $\HH_{\sum_{I\in\II}W_I,\bigoplus_{I\in\II}G_I}$ is the same as the algebra structure induced from $\HH_{W,G}$.  Thus the subalgebra $\mmap_\II(B_\II)$ of $\HH_{W, \bigoplus_{I \in \II} G_I}$
is a subalgebra of $\HH_{W,G}$.  Furthermore, we have the  following lemma.
\begin{lem}
The image $\mmap_\II(\gamma)$ is independent of the choice of $\II$ and $\{G_I\}$.
\end{lem}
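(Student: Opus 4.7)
The plan is to establish independence by reducing to the case of a common refinement, and then showing that refinement preserves the image. First, I observe that with $\II$ held fixed, the formulas \eqref{eq:mymm2} and \eqref{eq:mymm1} defining the fundamental-piece mirror map $\mmap_I$ depend only on $W_I$ and on the element $\kn{m_I}{g_I}$ being evaluated, not on the choice of $G_I$; hence varying $\{G_I\}$ alone cannot change $\mmap_\II(\gamma)$. It remains then to compare two different partitions $\II, \II'$. For this I pass to the common refinement $\KK = \{I \cap I' : I\in\II,\, I'\in\II',\, I\cap I'\neq\emptyset\}$, taking each group $G_K$ to be, for instance, the one generated by $g_K$ (or any other choice making $(W_K,G_K)$ fundamental). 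Conditions (2)--(4) of \autoref{def:nicely} transfer directly to $(\KK,\{G_K\})$ from either original splitting, and fundamentality of $(W_K,G_K)$ is inherited from \autoref{property} applied to the restricted index set.

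By iterating, it suffices to assume that a single block $I\in\II$ is refined to $I = I_1 \sqcup I_2$. \autoref{def:nicely} permits two cases. If $g_I=0$, then by condition (4) either $|I|=1$ (no refinement is possible) or $m_I=1$, so the relevant tensor factor is the identity $\kn{1}{0}$, which splits as $\kn{1}{0}\otimes\kn{1}{0}$ and maps to the identity on both sides. If $g_I\neq 0$, fundamentality forces $m_I=1$, and for the refinement to be nice, condition (3) requires both $g_{I_1}$ and $g_{I_2}$ to be non-trivial. Writing $g_I = (\M_{W_I}\tr)^{-1}(\brr_I+\one)$ and exploiting the block-diagonal structure of $\M_{W_I}$ inherited from the decoupled sum $W_I\tr = W_{I_1}\tr + W_{I_2}\tr$, the exponent vector cleanly splits as $\brr_I = \brr_{I_1} + \brr_{I_2}$. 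The formula \eqref{eq:mymm1} then renders both $\mmap_I(\kn{1}{g_I})$ and $\mmap_{I_1}(\kn{1}{g_{I_1}})\otimes\mmap_{I_2}(\kn{1}{g_{I_2}})$ (identified via \autoref{ax:sums}) as scalar multiples of $\kn{\X^{\brr_I}}{0}$, and the desired equality reduces to the scalar identity
\begin{equation*}
    \rca_{W_I}^{\bwa_I \cdot \brr_I} = \rca_{W_{I_1}}^{\bwa_{I_1}\cdot\brr_{I_1}}\cdot\rca_{W_{I_2}}^{\bwa_{I_2}\cdot\brr_{I_2}}.
\end{equation*}

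The principal obstacle is verifying this multiplicativity of the rescaling constants across decoupled sums. The defining equation \eqref{rescaledef-a} pins $\rca$ down only up to a root of unity, and the integer weights $\bwa$ themselves depend on the polynomial under consideration, since the common denominator $d$ in \eqref{intweights} may differ between $W_I$ and each $W_{I_k}$. The key observation is that the three quantities on the right-hand side of \eqref{rescaledef-a}, namely $\mu_W$, $\X^\expon$, and $\hess(W)$, are each multiplicative under decoupled sums. One can therefore fix $\rca$ at the atomic level once and for all, choosing compatible roots so that the identity above holds for every decomposition into atomic factors. The analogous rescaling $\rcb$ on the untwisted sector does not enter any nontrivial refinement, because condition (4) of \autoref{def:nicely} forces any block $I$ with $g_I=0$ and $m_I\neq 1$ to be a singleton.
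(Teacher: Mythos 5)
Your argument is, in substance, the paper's own one-sentence proof unpacked: the paper disposes of the lemma by observing that \eqref{eq:mymm1} and \eqref{eq:mymm2} refer only to the group elements and not to the chosen subgroups $\{G_I\}$ (your first observation), and that \eqref{eq:mymm1} respects the splitting into decoupled summands (what your common-refinement reduction is designed to verify). Your checks that the common refinement again splits nicely, with $G_K\tr$ generated by $g_K$, are at the same level of detail as \autoref{allsplit} in the paper, and your reduction to refining a single block, using the block-diagonal form $\brr_I=\brr_{I_1}+\brr_{I_2}$ from \autoref{lem:write-sym-groups} and the identification of \autoref{ax:sums}, is correct; likewise your treatment of the $g_I=0$ blocks via condition (4) of \autoref{def:nicely}. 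Where you add genuine value is in isolating the actual content of ``\eqref{eq:mymm1} respects the splitting'': the scalar identity $\rca_{W_I}^{\bwa_{W_I}\cdot\brr_I}=\rca_{W_{I_1}}^{\bwa_{W_{I_1}}\cdot\brr_{I_1}}\,\rca_{W_{I_2}}^{\bwa_{W_{I_2}}\cdot\brr_{I_2}}$, which is sensitive both to the root-of-unity ambiguity in \eqref{rescaledef-a} and to the fact that the integer weights of \eqref{intweights} depend on the common denominator of the polynomial under consideration.

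The one soft spot is that your resolution of this point is asserted rather than proved. Multiplicativity of $\mu_W$, $\X^{\expon}$ and $\hess(W)$ gives the identity only for the single vector $\brr=\expon$; equivalently, it determines $\rca_{W_I}$ only up to a root of unity of order $\bwa_{W_I}\cdot\expon_{W_I}$. What you still need is a single constant $\rca_{W_I}$ satisfying \eqref{rescaledef-a} for which $\rca_{W_I}^{\,\wa_j}$ agrees with the corresponding power of the constant of the atomic (or sub-block) factor containing $X_j$, for every $j$ simultaneously; the existence of such a simultaneous root for independently chosen atomic constants is a nontrivial compatibility condition, not a formal consequence of multiplicativity. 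The cleanest repair is to fix $\rca$ once and for all for each atomic polynomial and to \emph{define} the scalar in \eqref{eq:mymm1} multiplicatively over the atomic factors; this is consistent with \eqref{rescaledef-a} precisely by the multiplicativity you cite, and then independence of the partition (and of $\{G_I\}$) is immediate. Since the paper itself only asserts that \eqref{eq:mymm1} respects the splitting, this is a refinement of, not a divergence from, its argument; with that adjustment (or an explicit verification of compatible root choices) your proof is complete and somewhat more scrupulous than the original.
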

This follows from the fact that Equation \eqref{eq:mymm1} respects the splitting, and both \eqref{eq:mymm1} and \eqref{eq:mymm2} refer only to the group elements, and not to the subgroup.

\begin{remark} \label{sec:g-v-s}
By \autoref{rem:agrees}  our map for fundamental pairs agrees with Krawitz's vector space isomorphism (up to a minor ambiguity for even-variable loops).  Since these maps all respect splitting of decoupled sums,  our $\mmap$ also preserves the grading and is a vector space isomorphism.  
\end{remark}				
Combining all these results we have the following.
\begin{proposition}\label{mirrorwelldef}
The map $\mmap:\BB_{W\tr,G\tr}\rightarrow \HH_{W,G}$ is a well-defined, degree-preserving isomorphism of vector spaces.  
\end{proposition}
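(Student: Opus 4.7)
The plan is to assemble \autoref{mirrorwelldef} from three components already in place: existence of a nice splitting (\autoref{allsplit}), independence of the image under the choice of splitting (the preceding unnamed lemma), and the identification of $\mmap$ on each fundamental factor with Krawitz's vector space mirror map (\autoref{sec:g-v-s}). With these in hand, most of the work reduces to bookkeeping. First I would verify well-definedness: given any spanning element $\gamma = \kn{\prod m_i}{\dse{g_i}} \in \BB_{W\tr,G\tr}$, \autoref{allsplit} furnishes at least one pair $(\II,\{G_I\})$ with respect to which $\gamma$ splits nicely. Because each $(W_I,G_I)$ is fundamental, the fundamental-factor construction yields maps $\mmap_I : \BB_{W_I\tr,G_I\tr} \rightarrow \HH_{W_I,G_I}$, and the decoupled-sums prescription assembles them into $\mmap_\II$ on $B_\II$. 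The image lies in $\HH_{W,G}$ because Equations \eqref{eq:mymm1} and \eqref{eq:mymm2} manufacture sectors whose group labels are sums of elements of the $G_I \subseteq G$, and by \autoref{cor:change-group-product} the algebra structure induced from $\HH_{\sum_I W_I,\bigoplus_I G_I}$ agrees with that of $\HH_{W,G}$ on this subspace. Independence of $\mmap_\II(\gamma)$ from the choice of $(\II,\{G_I\})$ is the preceding lemma, so the value $\mmap(\gamma)$ depends only on $\gamma$.

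Second, extending linearly over a monomial basis of $\BB_{W\tr,G\tr}$ produces a well-defined linear map, because the splitting procedure in \autoref{allsplit} depends only on which $g_i$ are trivial, and hence is intrinsic to each basis element. Degree preservation then follows immediately: the rescaling constants $\rca^{-\bwa\cdot\brr}$ and $\rcb^{\wb_j}$ are scalars in $\CC^*$ and do not affect grading; Krawitz's vector-space mirror map is known to respect the $W$-degree on every fundamental piece; and $\QQ$-grading is additive under the tensor-product assembly used for decoupled sums.

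Finally, for bijectivity I would argue one fundamental factor at a time. By \autoref{rem:agrees}, on each $(W_I,G_I)$ our prescription coincides with Krawitz's vector-space isomorphism, except possibly on the ambiguous two-dimensional subspace in the even-variable loop case, where \eqref{eq:mymm2} is still known to be surjective and hence bijective by dimension count. Tensor products of bijections are bijections, so $\mmap$ restricted to any $B_\II$ is bijective onto a subspace of $\HH_{W,G}$; since every basis element of $\BB_{W\tr,G\tr}$ lies in some such $B_\II$ and the corresponding splitting on the A-side exhausts $\HH_{W,G}$ as well, $\mmap$ is a bijection. The main obstacle, as I see it, is precisely this even-variable loop ambiguity: one must confirm that the generator-level definition \eqref{eq:mymm2}, which differs from Krawitz's choice on a two-dimensional subspace, nevertheless induces a bijection there. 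This is the one step that needs to be checked carefully rather than absorbed from the prior lemmas, and it is the reason the statement of \autoref{mirrorwelldef} refers to vector-space isomorphism only; the ring homomorphism property is deferred to \autoref{sec:ainc}.
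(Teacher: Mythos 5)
Your proposal is correct and follows essentially the same route as the paper, which proves the proposition simply by combining \autoref{allsplit}, the independence-of-splitting lemma, and \autoref{rem:agrees}/\autoref{sec:g-v-s} (agreement with Krawitz's vector space isomorphism up to rescaling and the even-variable loop ambiguity, handled by surjectivity on that subspace). Your identification of the even-variable loop ambiguity as the one point requiring separate care matches the paper's own treatment in \autoref{rem:agrees}.
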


\section{Computations in $\HH_{W,G}$} \label{sec:ainc}
In this section we prove two results about computations of products or correlators in $\HH_{W,G}$.  The first shows that we can change groups without changing the products and the second gives a selection rule that allows us to prove that many correlators vanish.
 
\subsection{Changing groups}
As mentioned in \autoref{Amasterstatespace}, for any admissible group $G$ of symmetries of $W$, the projected state space $\HH_{W,G}$ is a vector subspace of the maximal unprojected state space $\uHH_{W,\Gmax}$, but we have no algebra structure on $\uHH_{W,\Gmax}$.  In this section we address this problem with the following proposition.

\begin{proposition} \label{cor:change-group-product}
If $H$ and $H'$ are two admissible groups of symmetries of $W$, then  for any $\kn{m}{g}$ and $\kn{n}{h}$  in $\HH_{P,H} \cap \HH_{P, H'}$, the product $\kn{m}{g} \star \kn{n}{h}$ is the same whether is it computed in $\HH_{P,H}$ or $\HH_{P, H'}$.
\end{proposition}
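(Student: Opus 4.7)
The plan is first to reduce to the case $H \le H'$. If $H'' := \langle H, H' \rangle$, then $H''$ is still admissible (it contains $J$) and both $H, H' \le H''$, so comparing the product in $\HH_{W,H}$ with that in $\HH_{W,H''}$, and then the product in $\HH_{W,H'}$ with that in $\HH_{W,H''}$, reduces the statement to two applications of the containment case. So assume henceforth $H \le H'$, write $\alpha = \kn{m}{g}$ and $\beta = \kn{n}{h}$, and let $\gamma := \alpha \star_H \beta$ be the product computed in $\HH_{W,H}$. By \autoref{Jg-grading} the element $\gamma$ lies in $\HH_k$ with $k = g+h-J$, and since $g,h,J \in H \subseteq H'$ the sector $k$ belongs to $H'$.

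Because the diagonal symmetry groups in play are abelian, $H$ is normal in $H'$, so the quotient $H'/H$ acts on the $H$-invariant subspaces, yielding isotypic decompositions
\[
    \uHH_k^H = \uHH_k^{H'} \oplus V_k, \qquad \uHH_{-k}^H = \uHH_{-k}^{H'} \oplus V_{-k},
\]
where $V_{\pm k}$ gather the nontrivial $H'/H$-characters. A character computation on the residue pairing (using that $\langle h'a, h'b \rangle$ differs from $\langle a,b\rangle$ only by the character by which $h'$ acts on the Hessian, and that a nonzero pairing forces the characters of $a$ and $b$ to multiply to that of the Hessian) shows that the cross-terms $\uHH_k^{H'} \otimes V_{-k}$ and $V_k \otimes \uHH_{-k}^{H'}$ pair to zero. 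Hence the pairing matrix of $\HH_{W,H}$ on these sectors is block-diagonal in this decomposition, and so is its inverse $\eta^{\tau,\sigma}$.

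Next I would apply \autoref{gmax-inv} to kill the mixed correlators. For $\tau \in V_{-k}$ and any $h' \in H'$, the $H'$-invariance of $\alpha$ and $\beta$ combined with $\Gmax$-invariance of the correlator gives $\langle \alpha,\beta,\tau\rangle = \langle \alpha,\beta,h'\tau\rangle$, so averaging over cosets of $H$ in $H'$ replaces $\tau$ by its $H'$-invariant projection; since $\tau \in V_{-k}$ this projection is $0$. Feeding this vanishing together with the block-diagonality of $\eta^{\tau,\sigma}$ into the product formula \eqref{eq:mul} yields two conclusions simultaneously: the components of $\gamma$ along $V_k$ all vanish, so $\gamma \in \uHH_k^{H'} \subseteq \HH_{W,H'}$; and the coefficients of $\gamma$ in the $\uHH_k^{H'}$-block are computed by precisely the formula \eqref{eq:mul} that defines $\alpha \star_{H'} \beta$.

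The main delicate point in this last identification is that the formula for $\gamma$ involves correlator values $\langle \alpha,\beta,\tau'\rangle^{W,H}$ with $\tau' \in \uHH_{-k}^{H'}$, whereas $\alpha \star_{H'} \beta$ is assembled from $\langle \alpha,\beta,\tau'\rangle^{W,H'}$. To match them I would invoke the fact that the FJRW genus-zero three-point correlator is inherited from a $\Gmax$-equivariant structure on the unprojected state space in the construction of \cite{fjr1}, so that for triples of inputs lying simultaneously in $\HH_{W,H}$ and $\HH_{W,H'}$ the two correlator values coincide; the sectorwise residue pairings also agree on the intersection by definition. Once this is granted, matching coefficient by coefficient yields $\gamma = \alpha \star_{H'} \beta$, completing the proof.
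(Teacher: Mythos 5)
There is a genuine gap, and it sits exactly at the place you flag as "the main delicate point." Everything up to that point (reduction to $H \le H'$ via $\langle H, H'\rangle$, the isotypic decomposition under $H'/H$, killing the non-$H'$-invariant contributions by averaging with \autoref{gmax-inv} and using \autoref{Jg-grading} to locate the output sector) is sound and is essentially a more elaborate version of what the paper does. But the step you then "invoke as a fact" — that for inputs lying in $\HH_{P,H} \cap \HH_{P,H'}$ the correlator values $\left<\cdot,\cdot,\cdot\right>^{P,H}$ and $\left<\cdot,\cdot,\cdot\right>^{P,H'}$ coincide because the correlator is "inherited from a $\Gmax$-equivariant structure on the unprojected state space" — is not a fact available from the construction, and it is precisely the nontrivial content of the paper's \autoref{lem:change-group}. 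The paper even warns in \autoref{Amasterstatespace} that, unlike the B-model, there is no evident structure on the unprojected A-model space $\uHH_{W,\Gmax}$ that projects to the algebra (or correlator) structure for every admissible group; so you cannot appeal to such a structure.

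The reason this is genuinely nontrivial is that the FJRW correlators are defined as integrals of virtual classes over moduli stacks of $P$-curves, $\W_{0,3,H}(P;g,h,k)$, which depend on the chosen group $H$, not just on the state-space inputs. Comparing $\left<\cdot,\cdot,\cdot\right>^{P,H}$ with $\left<\cdot,\cdot,\cdot\right>^{P,H'}$ requires the finite morphism $\adm:\W_{0,3,H}(P;g,h,k)\rightarrow \W_{0,3,H'}(P;g,h,k)$ of \cite[2.3.1]{fjr1}, the fact that in the genus-zero three-point case this morphism is surjective and finite, the compatibility of the virtual classes under pullback along $\adm$ (Theorem 6.3.5 of \cite{fjr3}), and a degree count matching $\deg(\adm)$ against $\deg(\st^{P,H})/\deg(\st^{P,H'})$. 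Without supplying this geometric comparison (or citing it as a separate lemma to be proved), your argument is circular at its key step: the block-diagonal bookkeeping only reduces the proposition to the statement that the two correlators agree on common inputs, which is what actually needs proof. The sectorwise pairings do agree by definition, as you say, but the correlators do not agree "by definition," and that is where the real work lies.
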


The key observation needed to prove this proposition is the following lemma.
\begin{lem} \label{lem:change-group}
	Let $P$ be a quasi-homogeneous, non-degenerate, invertible polynomial and $H$ and $H'$ be admissible subgroups of $\Gmax_{P}$.  Suppose we have $\kn{m}{g}$, $\kn{n}{h}$, and $\kn{p}{k}$ in the vector space $\HH_{P,H} \cap \HH_{P,H'}$.  We have
\begin{equation}
	\left< \kn{m}{g}, \kn{n}{h}, \kn{p}{k} \right>^{P,H} = \left< \kn{m}{g}, \kn{n}{h}, \kn{p}{k} \right>^{P,H'}.\label{correlator}
\end{equation}
That is, we may compute the three point correlator in either FJRW ring.
\end{lem}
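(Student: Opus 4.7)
The plan is to argue that the genus-zero, three-point FJRW correlator depends only on local data at the three marked points---the monodromies $g,h,k$ and the chosen class representatives $m,n,p$---and is insensitive to which admissible ambient group is used to set up the theory. Granted this locality, the lemma follows at once: because $\kn{m}{g}$, $\kn{n}{h}$, $\kn{p}{k}$ all lie in $\HH_{P,H}\cap \HH_{P,H'}$, the triple $(g,h,k)$ lies in $H\cap H'$ and the representatives $m,n,p$ are simultaneously $H$- and $H'$-invariant, so the same input data is fed into both correlators.

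To make this rigorous, I would first recall that the coarse moduli $\MM_{0,3}$ is a point, so the FJRW moduli stack $\MM_{0,3}(W,H)$ decomposes as a disjoint union indexed by triples of monodromies at the marked points. The component labelled by $(g,h,k)$ is built from the unique three-pointed genus-zero orbicurve carrying the orbifold line bundles $\mathcal L_j$ of degrees $l_j = q_j - \sum_{i=1}^3 \Theta^i_j$, and the virtual fundamental cycle is assembled from these $\mathcal L_j$ via the Witten map. None of this construction references the ambient group $H$, so the $(g,h,k)$-component of $\MM_{0,3}(W,H)$ coincides with that of $\MM_{0,3}(W,H')$ as substacks of $\MM_{0,3}(W,\Gmax)$, and they carry the same virtual class. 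Since the insertions $m\in\QQQ_{W_g}$, $n\in\QQQ_{W_h}$, $p\in\QQQ_{W_k}$ also agree literally in the two theories, the resulting numbers are equal. A more elementary alternative is to use the intersection $H\cap H'$, which is admissible (it contains $J$ because both $H$ and $H'$ do): then $\HH_{P,H}\cap\HH_{P,H'}\hookrightarrow \HH_{P,H\cap H'}$, and one may compare each side of \eqref{correlator} to the correlator in $\HH_{P,H\cap H'}$ via the natural pullback maps $\MM_{0,3}(W,H\cap H')\to \MM_{0,3}(W,H)$ and $\MM_{0,3}(W,H\cap H')\to \MM_{0,3}(W,H')$. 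This reduces the statement to the case of a single inclusion $H_0\subseteq H$ of admissible groups.

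The hard part is precisely the foundational locality claim---that the $(g,h,k)$-component of the moduli stack together with its virtual cycle is intrinsic to the triple of monodromies---which is not among the axioms collected in \autoref{sec:axioms}. One must either quote it directly from the construction in \cite{fjr1}, or, pursuing the pullback route, check compatibility of virtual fundamental cycles under $\MM_{0,3}(W,H_0)\to \MM_{0,3}(W,H)$ for admissible $H_0\subseteq H$. Once this technical point is settled, everything else is formal: the pairings, the insertions, and the $J$-grading selection rule from \autoref{Jg-grading} all match across the two theories, and the equality \eqref{correlator} follows.
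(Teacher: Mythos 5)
Your second, ``pullback'' route is in fact the paper's proof, but your proposal stops exactly where the content lies, and the ``locality'' framing you lead with is not correct as stated. For $H \le H'$ the stacks $\W_{0,3,H}(P;g,h,k)$ and $\W_{0,3,H'}(P;g,h,k)$ do \emph{not} coincide as components of a common ambient moduli stack: they are related by a finite morphism $\adm\colon \W_{0,3,H}(P;g,h,k)\to \W_{0,3,H'}(P;g,h,k)$, surjective here because $\W_{0,3,H'}(P;g,h,k)$ has a single geometric point (this is \cite[2.3.1]{fjr1}), and in general $\deg(\adm)>1$. Moreover the correlator is not simply an integral of a cap product against a virtual class; it carries the normalization $1/\deg(\st^{P,H})$, which depends on the group. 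So the claim that ``the same input data is fed into both correlators'' fails at face value: pushing forward naively, the two numbers would differ by $\deg(\adm)$ and by the ratio of the degrees of the two stabilization maps. The equality \eqref{correlator} holds because (i) the virtual class on the $H$-moduli is the pullback along $\adm$ of the virtual class on the $H'$-moduli --- this is Theorem 6.3.5 of \cite{fjr3}, which is precisely the ``compatibility of virtual fundamental cycles'' that you explicitly defer --- and (ii) the projection formula then produces a factor $\deg(\adm)$ that cancels exactly against the change in $\deg(\st)$. Neither (i) nor the bookkeeping (ii) appears in your proposal, so the argument is incomplete at its crux; your own closing sentence (``once this technical point is settled'') concedes as much.

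Two smaller points. The reduction you propose via $H\cap H'$ (admissible since both contain $J$, and elements of $\HH_{P,H}\cap\HH_{P,H'}$ do lie in $\HH_{P,H\cap H'}$) is fine and matches the paper's reduction to the case $H\le H'$. And because $\MM_{0,3}$ is a point and the target moduli stack has a single geometric point, no decomposition of a larger moduli stack into monodromy components is needed: one compares the two moduli stacks for the fixed triple $(g,h,k)$ directly via $\adm$. To repair the proof, replace the appeal to an unstated locality principle by the citation of \cite[2.3.1]{fjr1} for $\adm$ and \cite[Thm 6.3.5]{fjr3} for the pullback of virtual classes, and carry out the degree cancellation against the $1/\deg(\st^{P,H})$ normalization in the definition of $\Lambda^{P,H}_{0,3}$.
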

\begin{proof}
It suffices to prove the result in the case that  $H \leq H'$.  The correlator $\left<\kn{m}{g}, \kn{n}{h}, \kn{p}{k}   \right>^{P,H}$
is defined (see \cite[Def 4.6.2]{fjr1}) as 
\[
	\left<\kn{m}{g}, \kn{n}{h}, \kn{p}{k}   \right>^{P,H}:=\int_{\MM_{0,3}} \Lambda^{P,H}_{0,3}(m, n, p),
\]
where ${\MM_{0,3}}$ is the  stack of three-pointed, genus-zero stable curves, and $\Lambda^{P,H}_{0,3}(m,n,p)$ is defined (see \cite[Def 4.2.1]{fjr1}) to be the Poincar\'e dual of the pushforward of the virtual cycle, capped with the classes $m$, $n$, and $p$:
\begin{equation*}
\Lambda^{P,H}_{0,3}(m, n, p):= \frac{1}{\deg(\st^{P,H})}PD \,\st^{P,H}_*\left(\left[\W_{0,3,H}(P;{g},{h},{k}) \right]^{vir} \cap  
(m \cup n \cup p)
\right).
\end{equation*}
Here $\st^{P,H}:\W_{0,3,H}(P;{g},{h},{k})\rightarrow  \MM_{0,3} $ is the forgetful map taking the stack of genus-zero, stable $P$-curves with admissible group $H$ to the  stack of genus-zero stable curves with three marked points, defined simply by forgetting the $P$-structure on the curve.  The correlator   $\left<\kn{m}{g}, \kn{n}{h}, \kn{p}{k}   \right>^{P,H'}$
is defined similarly.

According to \cite[2.3.1]{fjr1}, there is a finite morphism of stacks $\adm:\W_{0,3,H}(P;{g},{h},{k})\rightarrow 
\W_{0,3,H'}(P;{g},{h},{k})$, surjective onto an open and closed substack of $\W_{0,3,H'}(P)$.  Moreover, $\W_{0,3,H'}(P)$ actually has only a single geometric point, corresponding to the unique genus-zero, three-pointed $P$-curve with markings ${g}$, ${h}$, and ${k}$, respectively.  Therefore, in this case, the morphism $\adm$ is surjective and finite.

Theorem 6.3.5 of \cite{fjr3} shows that the virtual class $\left[\W_{0,3,H'}(P;{g},{h},{k}) \right]^{vir} $ is the pullback along $\adm$ of the virtual cycle $\left[\W_{0,3,H'}(P;{g},{h},{k}) \right]^{vir}$ on $\W_{0,3,H'}(P)$, which gives
\begin{align*}
  \left<\kn{m}{g},\kn{n}{h}, \kn{p}{k}   \right>^{P,H} 
&=\int_{\MM_{0,3}} \frac{PD \,\st^{P,H'}_*\adm_*\left(\adm^*\left[\W_{0,3,H'}(P;{g},{h},{k}) \right]^{vir} \cap  
(m \cup n \cup p)
\right)}{\deg(\st^{P,H})}\\
&= \int_{\MM_{0,3}} \frac{\deg(\adm) PD \,\st^{P,H'}_*\left(\left[\W_{0,3,H'}(P;{g},{h},{k}) \right]^{vir} \cap  
(m \cup n \cup p)
\right)}{\deg(\st^{P,H})}\\ 
 &=\left<\kn{m}{g}, \kn{n}{h}, \kn{p}{k}   \right>^{P,H'}.
\end{align*}\end{proof}
\begin{proof}[Proof of \autoref{cor:change-group-product}]
By definition of multiplication, we have
\[
	\kn{m}{g} \star_{P,H} \kn{n}{h} = \sum_{\sigma, \tau} \left<\kn{m}{g}, \kn{n}{h}, \sigma\right> \eta^{\sigma, \tau} \tau,
\]
where $\sigma$ and $\tau$ range over a basis of $\HH_{P,H}$.  The product is defined similarly for $\HH_{P,H'}$.  For elements $\HH_{P,H} \cap \HH_{P,H'}$, \autoref{lem:change-group} tells us that we can compute the correlators (and thus also the pairing) in either.  It suffices then to show that if we have a basis element $\kn{p}{k}$ of $\HH_{P,H}$ that is not in $\HH_{P,H'}$, the correlator
$	\left< \kn{m}{g}, \kn{n}{h}, \kn{p}{k} \right>^{P, H} $
vanishes. 
This is a straightforward consequence of  
\autoref{gmax-inv} ($\Gmax$-invariance) and \autoref{Jg-grading}. \end{proof}

\subsection{A selection rule}
We isolate the following fact since it will be used multiple times in the proof of \autoref{lem:vanish-as-well}.
\begin{proposition} \label{lem:break-off}
Suppose $P$ is a decoupled sum $P_1 + P_2$, and following \autoref{not:split}, we have three basis elements $\kn{m_1\cdot m_2}{g_1 + g_2}$, $\kn{n_1\cdot n_2}{ h_1 + h_2}$, and $\kn{p_1\cdot p_2}{ k_1 + k_2}$ of $\HH_{P,H}$.  Suppose $\kn{m_1}{g_1}$ and $\kn{n_1}{h_1}$ are invariant under $\Gmax_{P_1}$.  Then the correlator
\begin{equation}
 \left< \kn{m_1\cdot m_2}{g_1 + g_2}, \kn{n_1\cdot n_2}{ h_1 + h_2}, \kn{p_1\cdot p_2}{ k_1 + k_2}\right> \label{eq:breakable-corr}
\end{equation}
vanishes unless both
\begin{enumerate}
\item $\kn{p_1}{k_1}$ is also invariant under $\Gmax_{P_1}$. \label{item:also-inv} 
\item The correlator  \vspace*{-1ex}
\begin{equation}
 \left< \kn{m_1}{g_1}, \kn{n_1}{h_1}, \kn{p_1}{k_1} \right>^{P_1, \Gmax_{P_1}} \label{eq:break-off}
\end{equation}
\end{enumerate}
is non-vanishing. \label{item:break-off}
\end{proposition}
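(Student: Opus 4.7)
The plan is to prove the two claims in sequence: first use Axiom~\ref{gmax-inv} to force $\Gmax_{P_1}$-invariance on the $P_1$ part of the third insertion, then enlarge the orbifold group $H$ to a direct product so that Axiom~\ref{ax:sums} applies after an application of Lemma~\ref{lem:change-group}.

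For claim (1), I will apply Axiom~\ref{gmax-inv} with $h$ ranging over $\Gmax_{P_1}\subseteq \Gmax_{P}$. Such an $h$ acts trivially on the $P_2$ variables and, since the action is diagonal, multiplies each basis element $\kn{m_1 m_2}{g_1+g_2}$ by the same scalar $\chi_{m_1,g_1}(h)$ by which it acts on the $P_1$ piece $\kn{m_1}{g_1}$. By hypothesis this scalar is $1$ for the first two insertions, so the axiom yields $\langle \gamma_1,\gamma_2,\gamma_3\rangle = \chi_{p_1,k_1}(h)\,\langle \gamma_1,\gamma_2,\gamma_3\rangle$ for every $h\in\Gmax_{P_1}$. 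The correlator must therefore vanish unless $\chi_{p_1,k_1}$ is trivial on $\Gmax_{P_1}$, i.e., unless $\kn{p_1}{k_1}$ is $\Gmax_{P_1}$-invariant.

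For claim (2), I may now assume every insertion has a $\Gmax_{P_1}$-invariant $P_1$ piece, and I will pass from $H$ to the larger group $H':=\Gmax_{P_1}\times \pi_2(H)$, where $\pi_2\colon H\to \Gmax_{P_2}$ is projection onto the second factor. The key observation is that each $\gamma_i$ is already $H'$-invariant: for $(h_1,h_2)\in H$, $H$-invariance of $\gamma_i$ forces $\chi_{m_1,g_1}(h_1)\chi_{m_2,g_2}(h_2)=1$, and since the first factor is trivial, $\chi_{m_2,g_2}$ is trivial on all of $\pi_2(H)$; combined with $\Gmax_{P_1}$-invariance on the $P_1$ side this gives $H'$-invariance. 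The group $H'$ is admissible because $J_{P_2}=\pi_2(J_P)\in\pi_2(H)$, and $H\le H'$, so the insertions lie in $\HH_{P,H}\cap \HH_{P,H'}$.

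Then Lemma~\ref{lem:change-group} lets me compute the correlator in $\HH_{P,H'}$ instead, and because $H'$ is a direct product, Axiom~\ref{ax:sums} factors it as
\[
\langle \kn{m_1}{g_1},\kn{n_1}{h_1},\kn{p_1}{k_1}\rangle^{P_1,\Gmax_{P_1}}\cdot\langle \kn{m_2}{g_2},\kn{n_2}{h_2},\kn{p_2}{k_2}\rangle^{P_2,\pi_2(H)}.
\]
Non-vanishing of the product forces non-vanishing of the first factor, giving claim (2). The main obstacle, or at least the only step needing care, is showing that $H'$ still fixes the insertions; this leans entirely on the hypothesis to convert $P_1$-character constraints into $P_2$-character constraints.
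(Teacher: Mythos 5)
Your proposal is correct and follows essentially the same route as the paper's proof: condition (1) via Axiom~\ref{gmax-inv}, then a change of group to $\Gmax_{P_1}\times\pi_2(H)$ justified by Lemma~\ref{lem:change-group}, followed by Axiom~\ref{ax:sums} to factor the correlator. Your verification that the insertions are invariant under the enlarged group and that it is admissible simply fills in details the paper leaves to the reader.
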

\begin{proof}
Condition \ref{item:also-inv} follows from $\Gmax$-invariance (\autoref{gmax-inv}).
If this condition is satisfied, then we can pick groups $H_1 = \Gmax_{P_1}$ and $H_2 = \pi_2(H)$, where $\pi_2$ is projection onto the second factor of $\Gmax_{P} \cong \Gmax_{P_1} \times  \Gmax_{P_2}$.  We can then check that \autoref{lem:change-group} applies, and we can compute \eqref{eq:breakable-corr} in the ring $\HH_{W_1 + W_2, H_1 \times  H_2}$.  \autoref{ax:sums} now shows \eqref{eq:breakable-corr} is the product of \eqref{eq:break-off} and 
$  \left< \kn{m_2}{g_2}, \kn{n_2}{h_2}, \kn{p_2}{k_2} \right>^{P_2, H_2}$.\end{proof}

\section{Algebra homomorphism}

Although we have a well-defined mirror map, we must still prove that it respects multiplication.  We do this first for fundamental factors and then in the general case.  This requires several technical lemmas about loops and chains, which are provided in an appendix to this paper.

\subsection{Fundamental factors}

\begin{proposition}\label{fundhomom}
If $(W,G)$ is a fundamental pair, then the mirror map 
$$\mmap:\BB_{W\tr,G\tr} \rightarrow  \HH_{W,G}$$
defined in \autoref{fundamentalmmap} is a ring homomorphism
\end{proposition}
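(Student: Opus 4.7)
The plan is to verify $\mmap(\alpha \bstar \beta) = \mmap(\alpha) \star \mmap(\beta)$ on a basis of $\BB_{W\tr,G\tr}$. Fundamentality forces every non-trivial sector to be either empty or narrow, so the algebra splits as the untwisted piece $\BB_0 \cong (\QQQ_{W\tr})^{G\tr}$ together with one-dimensional pieces $\CC\cdot\kn{1}{g}$ indexed by non-identity $g \in G\tr$. This gives three cases for a pair of basis elements, and I would handle them in the order untwisted$\,\times\,$untwisted, untwisted$\,\times\,$twisted, twisted$\,\times\,$twisted, each building on the previous.

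In the untwisted$\,\times\,$untwisted case, the B-side product is the ordinary Milnor-ring product, while the image lands in narrow sectors of $\HH_{W,G} \subseteq \HH_{W,\Gmax_W}$. By \autoref{cor:change-group-product} the A-side product may be computed inside $\HH_{W,\Gmax_W}$, where Krawitz's unorbifolded algebra mirror map is already a ring isomorphism. Our $\mmap$ differs from Krawitz's only by the multiplicative rescaling $\kn{Y_j}{0} \mapsto \rcb^{\wb_j}(\cdot)$; since $j \mapsto \rcb^{\wb_j}$ extends to a multiplicative character on monomials in the $Y_j$, the rescaling is compatible with products and the homomorphism property transports unchanged.

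For the untwisted$\,\times\,$twisted case, $\alpha = \kn{m}{0}$ and $\beta = \kn{1}{g}$ with $g \neq 0$ narrow, the B-side product vanishes whenever $m$ is a nonconstant monomial, because $g$ fixes no variable and $e_g: \QQQ_{W\tr} \to \QQQ_g \cong \CC$ kills every $Y_j$. I would then show the A-side product also vanishes by invoking the decomposition $W = \sum_i W_i$ of \autoref{property}: any nontrivial monomial $m = \prod m_i$ contains variables from some $W_i\tr$ on which $g$ acts trivially, so \autoref{lem:break-off} lets me peel off the $W_i$ factor and reduce to a correlator on a strictly smaller factor, and then the $\Gmax$-invariance selection rule of \autoref{rem:gmax-inv} forces vanishing.

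The twisted$\,\times\,$twisted case is the main obstacle. Here $\alpha \bstar \beta = 0$ unless $g + h = 0$, in which case \autoref{Bprod} gives an explicit multiple of $\kn{1}{0}$ computed from the Hessian and $\mu$. On the A-side one must show (i) that for $g + h \neq 0$ the product $\rca^{-\bwa \cdot (\brr_g+\brr_h)} \kn{\X^{\brr_g}}{0} \star \kn{\X^{\brr_h}}{0}$ vanishes (again via the atomic-decomposition break-off of \autoref{lem:break-off} together with $\Gmax$-invariance), and (ii) that when $g + h = 0$ the broad correlators in $\HH_0$ combine with the rescaling constants, whose defining identities \eqref{rescaledef-a} and \eqref{rescaledef-b} are precisely the ratios $\mu/\hess$ on each side, to reproduce the B-side Hessian expression exactly. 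The hard part is the explicit evaluation of those broad correlators in $\HH_0$: this is where the atomic-type lemmas for loops, chains, and Fermats from the appendix are essential, since they identify top-degree monomials and provide the factor-by-factor Hessian identities that must match the B-side formula across the decomposition of $W$.
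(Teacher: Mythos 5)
Your untwisted$\times$untwisted case and most of your twisted$\times$twisted case track the paper, but the mixed (untwisted$\times$twisted) case rests on a false premise and this is a genuine gap. In a fundamental pair, any nontrivial $g\in G\tr$ with nonempty sector is \emph{narrow} by \autoref{def:fund}, so it acts nontrivially on every variable and hence on every summand $W_i\tr$; your claim that a nonconstant $m=\prod m_i$ ``contains variables from some $W_i\tr$ on which $g$ acts trivially'' is therefore never true here, and the peel-off you base on it does not get started. The reduction has to be made on the A-side images instead: $\mmap(\kn{1}{g})$ is a broad element $\rca^{-\bwa\cdot\bs}\kn{\X^{\bs}}{0}$ of the identity sector, while $\mmap(\kn{m}{0})$ is a combination of elements $\kn{1}{\hat h}$, and the delicate point your sketch assumes away is that $\hat h$ can be trivial on a factor: by \autoref{lem:write-group}, for an even-variable loop the monomials $\prod_{k\text{ odd/even}} Y_{j,k}^{a_{j,k}-1}$ (and the chain analogues) are sent back into the broad sector $\HH_0$. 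For these exceptional monomials the $\Gmax$-invariance rule of \autoref{rem:gmax-inv} does not by itself kill the product; one needs \autoref{lem:break-off} together with the pairing argument of \eqref{eq:vaw1} (using \autoref{Jg-grading}, \autoref{ax:pairing}, and the loop/chain lemmas \autoref{rem:1of3}, \autoref{lem:2a-2}) to conclude that nonvanishing would force $[(\M_j\tr)^{-1}(\bs+\one)]=0$, contradicting $g\neq 0$; only in the generic case does the argument of \eqref{eq:vaw2} with the rows $\bar\rho_{j,k}$ finish as you describe. This is precisely the content of the paper's \autoref{lem:vanish-as-well}, which your proposal needs but does not supply.

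On the remaining cases: your case (1) is the paper's argument (rescaled Krawitz plus \autoref{cor:change-group-product}), and in case (3) with $g+h\neq 0$ your appeal to $\Gmax$-invariance is a legitimate variant of the paper's route via \autoref{cor:equiv} (the two are equivalent through \autoref{lem:srAn}); the break-off lemma is not needed there and its invariance hypotheses are not automatic for broad identity-sector insertions. For $g+h=0$, note that no difficult broad correlator ever has to be evaluated: by \autoref{Jg-grading} the only contributing third insertion lies in $\HH_J=\CC\,\kn{1}{J}$, and \autoref{ax:pairing} reduces everything to the residue pairing. What actually remains, and what your item (ii) leaves unsaid, is to show $\brr+\bs=\expon-\one$ (excluding the exceptional loop solutions of \autoref{lem:srAn}, and using \autoref{property} for chains) and to compute $\mmap\bigl(\tfrac{1}{\mu_{W\tr}}\kn{\hess(W\tr)}{0}\bigr)=\kna{1}{-J}$ via the narrow multiplication \eqref{eq:narrow-mul} and \autoref{lem:-J}, so that both sides equal $\kna{1}{-J}$ after the rescalings \eqref{rescaledef-a} and \eqref{rescaledef-b}.
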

\begin{proof}
Assume that $\kn{m}{g}$ and $\kn{m'}{g'}$ are elements of $\BB_{W\tr,G\tr}$.  In order to check that $\mmap(\kn{m}{g}\starb \kn{m'}{g'}) = \mmap(\kn{m}{g})\stara \mmap(\kn{m'}{g'})$, we must consider three distinct cases: 
\begin{enumerate}
\item  $g=g'=0$ 
\item  $g=0$ but $g'\neq0$ and 
\item $g\neq 0$ and $g'\neq0$.
\end{enumerate}
In case (1) restrict the map $\mmap$  to the untwisted sector $\BB_{0}$, where it is a rescaling of the ring homomorphism defined in \cite{krawitz}.

Case (2) breaks into two parts.  Firstly, in the case of $\kn{1}{0}\starb \kn{1}{g'}$, the element $\kn{1}{0}$ is the identity element of $\BB_{W\tr,G\tr}$ and it maps to the identity element $\ringid = \kn{1}{J}$ of $\HH_{W,G}$, so the product is preserved in this case.  

Secondly, in the case of $\kn{m}{0}\starb \kn{1}{g'}$, where $m$ is not constant, \autoref{lem:vanish-as-well} (proved in the next section) shows that both products vanish.

In case (3) write $W$ as a decoupled sum of atomic types.  By \autoref{lem:write-sym-groups} we may write $g$ and $g'$ uniquely as  $g = (\M\tr)^{-1}(\brr + \one)$ and $g' = (\M\tr)^{-1}(\bs + \one)$, respectively, with the components of $\brr$ and $\bs$ satisfying the constraints of that lemma, as determined by the corresponding atomic types.

If $[\M^{-1}(\brr + \bs+ \mathbf 2)] \neq 0$ then the B-side product vanishes by definition, and the A-side product vanishes by \autoref{cor:equiv}.

%
%

 If $[\M^{-1}(\brr + \bs+ \mathbf 2)] = 0$, then 
$\brr + \bs = \expon - \one$.  For loops this follows from \autoref{rem:1of3}, 
since if either \eqref{enum:odd} or \eqref{enum:even} were true the B-side product would not be a product of non-identity sectors. 
Similarly for Fermats, $r_i + s_i = a_i -1$.  For chains, it is easy to check that if $\brr+\bs \neq \expon-\one$ then we get a contradiction to \autoref{property}.
By definition, the B-side product is 
\begin{equation}
\kn{1}{(\M\tr)^{-1}(\brr + \one)} \starb \kn{1}{(\M\tr)^{-1}(\bs + \one)}
=	\frac1\mu_{W\tr}\kn{\hess(W\tr)}{0}. \label{eq:b-side-product-is-by-def}
\end{equation}
To compute the image $\mmap(\knb{\hess(W\tr)}{0})$ we write\footnote{Recall that $\mmap$ agrees with Krawitz' algebra homomorphism for these correlators.} 
$$\mmap\left(\frac{1}{\mu_{W\tr}}\knb{\hess(W\tr)}{0}\right) = \rcb^{-\bwb\cdot(\bbeta+\bgamma)} \mmap\left(\knb{\Y^{\bbeta}}{0}\right) \starb \left(\knb{\Y^{\bgamma}}{0}\right)$$
with $\bbeta+\bgamma = \expontr$.  
We may apply \cite[Lem 3.2 and 3.5]{krawitz}, which states that 
\begin{equation}
	\kn{1}{[\M^{-1}(\bbeta + \one)]} \stara \kn{1}{[\M^{-1}(\bgamma + \one)]} = \kn{1}{[\M^{-1}(\bbeta + \bgamma + \one)]} \label{eq:narrow-mul}
\end{equation}
as long as $\bbeta + \bgamma \preceq \ba- \one$ (componentwise) 
and $[\M^{-1}(\bbeta + \bgamma + \one)] \neq 0$. 
Combining with \autoref{lem:-J} gives 
\begin{equation}
\mmap\left( \frac{1}{\mu_{W\tr}}\knb{\hess(W\tr)}{0}\right) = 
	\kn{1}{{[(\M)^{-1}\expontr]} }  = \kna{1}{-J}. \label{eq:product-maps-to}
\end{equation}
On the other hand, the A-side product is 
\begin{equation*}
\rca^{-\bwa\cdot(\brr+\bs)} \pair{ \kn{ \X^{\brr}}{\mathbf 0} }{ \kn{\X^{\bs}}{\mathbf 0}}
\kna{1}{-J}  = \kna{1}{-J}.  \label{eq:A-side-product2}
\end{equation*}
\end{proof}

\subsection{Mixed products}
The following lemma finishes the proof of \autoref{fundhomom}.
We must prove this lemma in more generality than just for fundamental pairs, since we need to apply it to pairs of elements which cannot be split nicely by a common partition. 

We will continue to use the notation of \autoref{BasicSetup}
and we will write the vector of variables of $W_j\tr$ as $\Y_j$ and similarly the vector of variables of $W_j$ as $\X_j$.
\begin{lem}  \label{lem:vanish-as-well}
Suppose we have a pair $\knb{\prod m_i}{\dse{g_i}}$ and $\knb{\prod n_i}{\dse{h_i}}$ and for some $j$, we have $m_j = 1$, $g_j = [(\M_j\tr)^{-1}(\bs + 1)] \neq 0$, and $n_j = \Y_j^{\bbeta} \neq 1$, $h_j = 0$.      Then the following products both vanish.
\begin{equation*}
	\kn{\prod m_i}{\dse{g_i}} \starb \kn{\prod n_i}{\dse{h_i}} 
\dsand
	\mmap(\kn{\prod m_i}{\dse{g_i}}) \stara \mmap(\kn{\prod n_i}{\dse{h_i}}) \end{equation*}
\end{lem}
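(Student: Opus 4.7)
The plan is to dispatch the two vanishing statements by essentially independent arguments, using only the structural information we have already extracted from $(W,G)$ and a single invocation of the selection rule \autoref{lem:break-off}.

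For the B-side product, I would work directly from the definition~\eqref{eq:bmul}. By \autoref{property}~(3), $g_j\in G\tr$ must fix either all or none of the $W_j\tr$-variables; since $g_j\neq 0$ it fixes none, while $h_j=0$ fixes all. So in the common fixed locus defining $\QQQ_{g,h}$ every $W_j\tr$-variable is absent, and the ring map $e_h:\QQQ_h\rightarrow \QQQ_{g,h}$ sends $n_j=\Y_j^{\bbeta}$ with $\bbeta\neq 0$ to zero. Therefore $e_g(m)\cdot e_h(n)=0$ and the B-product vanishes.

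For the A-side I would show that $\left\langle \mmap(\alpha),\mmap(\beta),\tau\right\rangle^{W,G}$ vanishes for every basis element $\tau\in\HH_{W,G}$, where I abbreviate $\alpha=\kn{\prod m_i}{\sum g_i}$ and $\beta=\kn{\prod n_i}{\sum h_i}$. By \eqref{eq:mymm1} the $j$-block of $\mmap(\alpha)$ is a scalar multiple of $\kn{\X_j^{\bs}}{0}$, lying in the identity sector of $W_j$; by \eqref{eq:mymm2} the $j$-block of $\mmap(\beta)$ is a scalar multiple of $\kn{1}{[\sum_k\beta_k\rho_{j,k}+J_j]}$, a narrow sector. \autoref{Jg-grading} then forces the $j$-block of $\tau$ to lie in the sector $-\sum_k\beta_k\rho_{j,k}$, and by \autoref{property}~(2) that restriction fixes either all $W_j$-variables (``Case A'') or none (``Case B'').

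The hardest step, I anticipate, is ruling out Case A. Case A is equivalent to $\bbeta\in\M_j\ZZ^{N_j}$, and a case-by-case inspection of the Fermat, loop, and chain exponent matrices, combined with the basis descriptions of \autoref{lem:milnor} (and the Chain \autoref{chainproperty} for chains), should show that the only $\bbeta$ satisfying the Milnor-basis constraints of $\QQQ_{W_j\tr}$ and lying in $\M_j\ZZ^{N_j}$ is $\bbeta=0$, which contradicts $\Y_j^{\bbeta}\neq 1$. I expect this is precisely what the technical appendix lemmas on loops and chains are designed to supply.

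Once Case A is excluded, $\tau_j$ is narrow and hence $\Gmax_{W_j}$-invariant, as is $\mmap(\beta)_j$. Using the symmetry of the three-point correlator I would then apply \autoref{lem:break-off} with $P_1=W_j$ and $P_2=\sum_{i\neq j}W_i$, taking $\mmap(\beta)$ and $\tau$ as the two $\Gmax_{W_j}$-invariant entries. The conclusion is that the correlator vanishes unless $\mmap(\alpha)_j=\text{const}\cdot\kn{\X_j^{\bs}}{0}$ is also $\Gmax_{W_j}$-invariant. But $\Gmax_{W_j}$-invariance of that identity-sector monomial is equivalent to $\bs+\one\in\M_j\tr\ZZ^{N_j}$, i.e.\ to $g_j=(\M_j\tr)^{-1}(\bs+\one)\equiv 0$, contradicting $g_j\neq 0$. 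Hence the correlator vanishes for every $\tau$, and so does the A-product.
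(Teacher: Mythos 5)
Your B-side argument is correct and is essentially the paper's (one small slip: \autoref{property}(3) applies to the full element $\dse{g_i}\in G\tr$, whose sector is nonempty, not to the component $g_j$, which need not lie in $G\tr$). On the A-side, however, there is a genuine gap. Writing $\alpha=\kn{\prod m_i}{\dse{g_i}}$ and $\beta=\kn{\prod n_i}{\dse{h_i}}$ as you do, you assert that the $j$-block of $\mmap(\beta)$ is a scalar multiple of the \emph{narrow} element $\kn{1}{[\M_j^{-1}(\bbeta+\one)]}$. This fails exactly when $[\M_j^{-1}(\bbeta+\one)]=0$, and that case really occurs: by \autoref{lem:write-sym-groups}, for an even-variable loop the identity equals $J+\sum_{k \text{ even}}(a_k-1)\rho_k$ (and similarly for chains), so the basis monomials with $\beta_k=\delta_{\mathrm{even}}^{k}(a_k-1)$ or $\delta_{\mathrm{odd}}^{k}(a_k-1)$ are sent into the \emph{broad} identity sector of $W_j$; moreover, by \autoref{rem:agrees} the image of these particular basis elements is not even explicitly computable. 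Consequently your key step ``once Case A is excluded, $\tau_j$ is narrow and hence $\Gmax_{W_j}$-invariant, \emph{as is} $\mmap(\beta)_j$'' is unjustified precisely there, since the invariance of $\mmap(\beta)_j$ was deduced from a narrowness that is false. This is exactly the case to which the paper devotes the first half of its A-side proof: it isolates the correlator \eqref{eq:vaw1}, whose third slot is the identity $\kn{1}{J_{W_j}}$, reduces it to a pairing, and uses \autoref{lem:2a-2} (resp.\ the Chain \autoref{chainproperty}) to force $\bs$ into the special alternating form, whence $[(\M_j\tr)^{-1}(\bs+\one)]=0$, contradicting $g_j\neq 0$.

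The remainder of your argument is sound and is close to the paper's treatment of the generic case \eqref{eq:vaw2}: your single application of \autoref{lem:break-off}, together with the observation that $\Gmax_{W_j}$-invariance of $\kn{\X_j^{\bs}}{0}$ (volume form included) is equivalent to $(\M_j\tr)^{-1}(\bs+\one)\in\ZZ^{N_j}$, i.e.\ to $g_j=0$, is the same selection-rule computation the paper performs with \autoref{rem:gmax-inv}. Your route could in fact be repaired in your own framework: the untwisted-sector mirror map is by construction valued in $\HH_{W_j,\Gmax_{W_j}}$, so every monomial occurring in $\mmap(\beta)_j$ is $\Gmax_{W_j}$-invariant whether that block is narrow or broad, and your break-off step then closes the broad case without knowing the image explicitly---but this observation must be made, and it is absent from your write-up. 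Two further caveats: the exclusion of your Case A (that $\bbeta\in\M_j\ZZ^{N_j}$ with $\bbeta$ in the Milnor basis range forces $\bbeta=0$) is true, but it is not what the appendix supplies---\autoref{lem:srAn} treats vectors $\brr+\bs+\mathbf{2}$ with entries between $2$ and $2a_i$---so you would need to carry out the (easy, analogous) alternating-sign estimate yourself; and you should note that \eqref{eq:narrow-mul} is what guarantees the image block is $\kn{1}{[\M_j^{-1}(\bbeta+\one)]}$ in the non-special case.
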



\begin{proof}[Proof of \autoref{lem:vanish-as-well}]
Since $g_j + h_j \neq 0$ but $\Y_j^{\bbeta} \neq 1$, the product $\knb{\prod m_i}{\dse{g_i}} \starb \kn{\prod n_i}{\dse{h_i}} $ will vanish. It remains to see that $	\mmap(\kn{\prod m_i}{\dse{g_i}}) \stara \mmap(\kn{\prod n_i}{\dse{h_i}})$  vanishes as well.  The image of $\kn{\prod  m_i}{ \dse{g_i}}$ will be a linear combination of things of the form
$
\kn{\prod \hat m_i}{\dse{\hat g_i}},
$
 where $\hat m_j = \X_j^{\bs}$, $\hat g_j = 0$. 

 We can see that the image of $\kn{\prod n_i}{\dse{h_i}}$ will be a linear combination of things of the form $\kn{\prod \hat n_i}{\hat h_i}$
\autoref{lem:write-group} shows exactly when $\hat h_j = 0$. If $W_j$ is an even variable loop, $\hat h_j = 0$ only when
$\beta_k = \delta_{\text{odd}}^k (a_k -1)$ or $\beta_k = \delta_{\text{even}}^k (a_k -1)$, which implies that
 $n_j$ is $\prod_{k \text{ even/odd}} X_{j,k}^{a_{j,k}-1}$.  
 For the potentially non-vanishing correlators, we can apply \autoref{lem:break-off} and examine pieces of the form
\begin{equation}
	\left<\kn{\X_j^{\bs}}{0}, \kn{\prod_{k \text{ odd/even}} X_{j,k}^{a_{j,k}-1}}{0}, \kn{1}{J_{W_j}} \right> \label{eq:vaw1},
\end{equation}
where we filled in the third spot using \autoref{Jg-grading}.  We see that this is non-vanishing only if $\X_j^{\bs}$ pairs with $\prod_{k \text{ odd/even}} X_{j,k}^{a_{j,k}-1}$, which will only happen if $\bs$ is of the form $s_k = \delta_{\text{odd/even}}^k (a_k -1)$.  But in that case we have $[(\M_j\tr)^{-1} (\bs + \one)] = 0$, contradicting our assumption.
For the case that $W_j$ is a chain, 
a similar argument produces a contradiction to the  Chain \autoref{chainproperty}; thus the product vanishes.

Suppose now that $\bbeta$ is not of these special forms.  Then $\hat n_j = 1$, $\hat h_j = [(\M\tr_j)^{-1}(\bbeta + \one)] \neq 0$.  Again using \autoref{lem:break-off}  and \autoref{Jg-grading}, we can examine pieces of the form 
\begin{equation}
 \left<\kn{\X_j^{\bs}}{0}, \kn{1}{[(\M_j)^{-1}(\bbeta + \one)]}, \kn{1}{[-{\M_j}^{-1}\bbeta]} \right> \label{eq:vaw2}.
\end{equation}
It follows from \autoref{lem:write-sym-groups} \eqref{item:write-loop-group}
 that $(\M_j)^{-1}\bbeta \neq 0$. 
Applying \autoref{rem:gmax-inv} to $\bar{\rho}_{j,k}$  this correlator vanishes unless $\bar{\rho}_{i,k}\tr(\bs + \one) \in \ZZ$. But this would imply that $[(\M_j\tr)^{-1}(\bs + \one)]=0$, again contradicting the hypothesis.
\end{proof}

\subsection{The general case}

We can now prove the homomorphism property in the general case.
\begin{theorem}\label{genhomom}
If $(W,G)$ satisfies \autoref{property} then 
then the mirror map 
$\mmap:\BB_{W\tr,G\tr} \rightarrow  \HH_{W,G}
$
is a ring homomorphism
\end{theorem}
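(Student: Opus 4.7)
The plan is to reduce the theorem to the already-established fundamental case \autoref{fundhomom} by combining the nice-splitting construction of \autoref{sec:new-mm}, the tensor-product isomorphisms \autoref{ax:sums} and \autoref{prop:b-sums}, the change-of-group results \autoref{lem:b-change-group} and \autoref{cor:change-group-product}, and the mixed-product vanishing of \autoref{lem:vanish-as-well}.

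Given basis elements $\alpha = \knb{\prod m_i}{\sum g_i}$ and $\beta = \knb{\prod n_i}{\sum h_i}$ of $\BB_{W\tr,G\tr}$, I would first partition $\{1,\ldots,M\}$ into
\begin{align*}
I_{**} &= \{i : g_i \neq 0,\ h_i \neq 0\}, &
I_{*0} &= \{i : g_i \neq 0,\ h_i = 0\}, \\
I_{0*} &= \{i : g_i = 0,\ h_i \neq 0\}, &
I_{00} &= \{i : g_i = 0,\ h_i = 0\}.
\end{align*}
By the third condition of \autoref{property}, whenever $g_i \neq 0$ the sector of $W_i\tr$ at $g_i$ is narrow, forcing $m_i = 1$; symmetrically $n_i = 1$ whenever $h_i \neq 0$. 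If in addition some $j \in I_{*0}$ has $n_j \neq 1$, or some $j \in I_{0*}$ has $m_j \neq 1$, then \autoref{lem:vanish-as-well} immediately gives $\alpha \starb \beta = 0 = \mmap(\alpha) \stara \mmap(\beta)$, and the identity is trivial.

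In the remaining case I would further refine $I_{**}$ into $I_{**}^{\mathrm{nc}} = \{i \in I_{**} : g_i + h_i \neq 0\}$ and $I_{**}^{\mathrm c} = I_{**} \setminus I_{**}^{\mathrm{nc}}$, and take the common partition
\[
    \II = \{I_{**}^{\mathrm{nc}},\ I_{*0},\ I_{0*}\} \cup \{\{i\} : i \in I_{**}^{\mathrm c} \cup I_{00}\}
\]
(discarding empty blocks). For each $I \in \II$ I would choose an admissible group $G_I$ of symmetries of $W_I$ so that $(W_I, G_I)$ is fundamental and all three of $\alpha$, $\beta$, and $\alpha \starb \beta$ split nicely with respect to $(\II, \{G_I\})$ in the sense of \autoref{def:nicely}. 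The refinement into singletons on $I_{**}^{\mathrm c}$ is forced by condition~(4) of that definition, since on those blocks $(g+h)_i = 0$ while the B-side product contributes a nontrivial Hessian monomial. With this setup, \autoref{lem:b-change-group} together with \autoref{prop:b-sums} realizes $\alpha \starb \beta$ inside $\bigotimes_{I \in \II} \BB_{W_I\tr, G_I\tr}$, while \autoref{cor:change-group-product} with \autoref{ax:sums} realizes $\mmap(\alpha) \stara \mmap(\beta)$ inside $\bigotimes_{I \in \II} \HH_{W_I, G_I}$. Since the mirror map was defined in \autoref{sec:new-mm} precisely as the tensor product of the fundamental-factor mirror maps, each of which is a ring homomorphism by \autoref{fundhomom}, the products agree block by block.

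The main obstacle is the construction and verification of the groups $\{G_I\}$. Each $G_I$ must be large enough to contain the nontrivial components of $g$, $h$, and $g+h$ restricted to $I$ and be admissible, yet small enough that $(W_I, G_I)$ is still fundamental, i.e., that no element of $G_I$ or $G_I\tr$ produces a broad, non-empty sector on $W_I$ or $W_I\tr$. This requires a careful block-by-block invocation of \autoref{property}, which was crafted precisely to control the block structure of $G$ and $G\tr$ relative to the decomposition $W = \sum W_i$ and to exclude the broad sectors that would otherwise obstruct the reduction to fundamental factors.
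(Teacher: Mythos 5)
Your proposal takes essentially the same route as the paper: partition the summands by the vanishing pattern of $(g_i,h_i)$, dispose of the mixed cases via \autoref{lem:vanish-as-well}, choose on each block the group generated by the restricted components so that each block is fundamental, and reduce to \autoref{fundhomom} through the change-of-group results (\autoref{lem:b-change-group}, \autoref{cor:change-group-product}) and the decoupled-sum properties (\autoref{ax:sums}, \autoref{prop:b-sums}). Your extra refinement of the block where both components are nontrivial into singletons wherever $g_i+h_i=0$ (so that the product itself splits nicely) is a slightly more careful bookkeeping step than the paper's, which keeps that block whole, but it is the same argument, and the verification you flag as the main obstacle is exactly the content of the paper's \autoref{lem:newgroups-in-SL}.
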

Given a general pair of elements $\kn{\prod m_i}{\dse{g_i}}$ and  $\kn{\prod n_i}{\dse{h_i}}$ in $\BB_{W\tr,G\tr}$, if they are not of the form described in \autoref{lem:vanish-as-well}, then we will show that we can pick a partition $\II$ and groups $\{G_I\}$ such that both elements split nicely with respect to $\II$ and $\{G_I\}$.  Since the algebra $\BB_{\sum_{I\in \II}{W_I}, \bigoplus_{I\in \II} G\tr_I}$ is a tensor product of fundamental factors, the desired result will follow.

\begin{defn} \label{def:I-g}\label{def:G-I} 
 Excluding the cases described in \autoref{lem:vanish-as-well}, consider a pair of  elements  $\kn{\prod m_i}{\dse{g_i}}$ and $ \kn{\prod n_i}{\dse{h_i}}$ in $\BB_{W\tr,G\tr}$.
Define the partition $
    \II = \{I_g, I_h, I_{g,h}\} \cup \{\{i\}\}_{i \in I_0}.
 $ as follows.
\begin{itemize}
	\item Let $I_h$ be the set of indices such that $g_i = 0$ and $h_i \neq 0$ (then by assumption $m_i = 1$).
	\item Let $I_g$ be the set of indices such that $h_i= 0$ and $g_i \neq 0$ (then $n_i = 1$).
	\item Let $I_{g,h}$ be the set of indices where $g_i, h_i \neq 0$.
	\item Let $I_0$ be the set of indices such that $g_i = h_i = 0$.
\end{itemize}

%
%

Also define groups as follows:
\begin{itemize}
	\item Let $G_{I_h}\tr$ be the group of symmetries of $W_{I_h}$ generated by $h_{I_h}$.
	\item Let $G_{I_g}\tr$ be the group of symmetries of $W_{I_g}$ generated by $g_{I_g}$.
	\item Let $G_{I_{g,h}}\tr$ be the group of symmetries of $W_{I_{g,h}}$ generated by both $h_{I_{g,h}}$ and $g_{I_{g,h}}$.
	\item Let $G_i\tr$ be the trivial group of symmetries of $W_i$ for $i \in I_0$.
\end{itemize}
\end{defn}

\begin{lem} \label{lem:newgroups-in-SL}
The groups $\{ G_I\tr \}$ 
are each contained in $\SL$.  Both $\kn{\prod m_i}{\dse{g_i}}$ and $\kn{\prod n_i}{\dse{h_i}}$ split nicely (see \autoref{def:nicely}) with respect to $\II$ and $\{ G_I\tr \}$. 
\end{lem}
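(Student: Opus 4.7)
The plan is to verify both assertions of the lemma---that each $G_I\tr$ is contained in $\SL$, and that the two given elements split nicely with respect to $\II$ and $\{G_I\tr\}$---by working through the partition $\II = \{I_g, I_h, I_{g,h}\} \cup \{\{i\}\}_{i \in I_0}$ subset by subset. The trivial groups $G_{\{i\}}\tr$ for $i \in I_0$ are tautologically in $\SL$, so the nontrivial content concerns only $I_g$, $I_h$, and $I_{g,h}$.

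For the $\SL$-containment I would start from $g, h \in G\tr \le \SL_N$, which gives $\one\tr g \in \ZZ$ and $\one\tr h \in \ZZ$. By the definitions of the index sets, $g = g_{I_g} + g_{I_{g,h}}$ and $h = h_{I_h} + h_{I_{g,h}}$, which yields the two relations $\one\tr g_{I_g} + \one\tr g_{I_{g,h}} \in \ZZ$ and $\one\tr h_{I_h} + \one\tr h_{I_{g,h}} \in \ZZ$. To separate these sums I would exploit that the sectors $\BB_g$ and $\BB_h$ are non-empty: by part~(3) of \autoref{property}, each $g_i$ is either zero or fixes no variable of $W_i\tr$ at all, so the supports of $g_{I_g}$, $g_{I_{g,h}}$, $h_{I_h}$, and $h_{I_{g,h}}$ each lie on a union of complete $W_i\tr$-blocks on which they are narrow. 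Combining this block structure with the selection rule witnessing non-emptiness of $\BB_g$ (the existence of a monomial in the fixed variables of $g$ invariant under all of $G\tr$) then forces the individual block-sums $\one\tr g_{I_g}$, $\one\tr g_{I_{g,h}}$, $\one\tr h_{I_h}$, $\one\tr h_{I_{g,h}}$ to be integers one at a time.

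With $\SL$-containment secured I would verify the four nice-splitting conditions of \autoref{def:nicely} in turn. Condition~(1)---fundamentality of each $(W_I, G_I)$---amounts to showing that every non-trivial element of $G_I\tr$ is narrow or has empty sector on $W_I\tr$, and similarly for $G_I$ on $W_I$; this follows because each generator is block-narrow by part~(3) of \autoref{property}, while any integer combination that developed fixed variables on some $W_i\tr$ would, once pulled back to $G\tr$ via \autoref{lem:b-change-group}, contradict that same condition. Condition~(2), $\kn{m_I}{g_I} \in \BB_{W_I\tr, G_I\tr}$, follows because $G_I\tr$ acts only on $W_I\tr$-variables and the restriction $m_I = \prod_{i \in I} m_i$ remains invariant under this smaller group. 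Condition~(3) is immediate from the construction of $I_g$, $I_h$, $I_{g,h}$ combined with part~(3) of \autoref{property}: whenever $g_I$ is nontrivial, every $g_i$ with $i \in I$ is nonzero and therefore acts nontrivially on every variable of $W_i$. Condition~(4) is where the exclusion used to avoid the hypothesis of \autoref{lem:vanish-as-well} plays its role: the only $I \in \II$ with $g_I = 0$ and $|I| > 1$ is $I_h$, and the combined exclusion (on both the $g$-$m$ side and the symmetric $h$-$n$ side) forces $m_i = 1$ for every $i \in I_h$, so $m_{I_h} = 1$; the analogous argument works for the second element with the roles of $g, m$ and $h, n$ swapped.

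The hardest step will be the $\SL$-containment for $G_{I_{g,h}}\tr$, since both $g_{I_{g,h}}$ and $h_{I_{g,h}}$ must individually lie in $\SL$; once those are secured the containments for $G_{I_g}\tr$ and $G_{I_h}\tr$ drop out of the two displayed equations. Wielding part~(3) of \autoref{property} against the non-emptiness of $\BB_g$ and $\BB_h$ in tandem, in such a way as to force each block-sum into $\ZZ$, is where the technical work of the proof concentrates.
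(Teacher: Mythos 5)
Your checklist (the $\SL$-containments plus the four conditions of \autoref{def:nicely}) is the right one, and your handling of conditions (2)--(4) is essentially correct -- in particular you correctly identify that excluding the configurations of \autoref{lem:vanish-as-well} is what forces $m_i=1$ for $i\in I_h$ and $n_i=1$ for $i\in I_g$, which is what condition (4) needs. The genuine gap is in the $\SL$-containment, which is exactly the step the paper's proof pins on one specific fact: $\dse{g_i}\in G\tr$ acts trivially on the $G\tr$-invariant element $\kn{\prod n_i}{\dse{h_i}}$, and vice versa. Spelled out: by \autoref{property}, $h$ fixes no variable in the blocks indexed by $I_h\cup I_{g,h}$ and all variables in the blocks indexed by $I_g\cup I_0$; since $n_i=1$ for $i\in I_g$ and $g$ is trivial on the $I_0$ blocks, the action of $g$ on $\prod n_i$ times the volume form over $\fix(h)$ is multiplication by $\exp\bigl(2\pi i\,\one\tr g_{I_g}\bigr)$. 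Invariance forces $\one\tr g_{I_g}\in\ZZ$, i.e. $g_{I_g}\in\SL$, and then $g_{I_{g,h}}\in\SL$ by subtracting from $g\in G\tr\le\SL_N$; symmetrically, invariance of $\kn{\prod m_i}{\dse{g_i}}$ under $h$ gives $h_{I_h}\in\SL$ and hence $h_{I_{g,h}}\in\SL$.

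Your proposal never supplies this mechanism. You appeal to ``the selection rule witnessing non-emptiness of $\BB_g$,'' i.e. to the existence of \emph{some} $G\tr$-invariant monomial supported on $\fix(g)$; but for a generic witness with exponent vector $\bbeta$, invariance under $h$ only yields $(\bbeta+\one)\tr\, h|_{\fix(g)}\in\ZZ$, not $\one\tr h_{I_h}\in\ZZ$ -- you need the \emph{particular} elements of the lemma, whose monomials are trivial on precisely the blocks where the other group element is supported, which is again what the excluded cases buy you. You also invert the order of deduction: you declare the ``hardest step'' to be proving $g_{I_{g,h}},h_{I_{g,h}}\in\SL$ directly, after which the pure blocks would ``drop out,'' but there is no invariance statement that sees the mixed blocks (all the relevant sectors are narrow there); it is the pure blocks $g_{I_g}$ and $h_{I_h}$ that the invariance argument controls, and the mixed blocks then follow by subtraction. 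As written, the central step is deferred as unresolved ``technical work'' rather than proved. (A smaller caveat: your argument for condition (1) pulls multiples of $g_{I_g}$ ``back to $G\tr$,'' but such multiples need not lie in $G\tr$, so that clause also needs care; the paper is admittedly terse on this point as well.)
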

This follows from the from the definitions, using the the fact that $\dse{g_i}$ preserves $\kn{\prod n_i}{\dse{h_i}}$ and vice versa.
\section{The Pairing} 

Having established the algebra isomorphism, 
to complete the proof of \autoref{mainthm}
we need only check that the pairing is preserved by $\mmap$.  
\begin{proposition}\label{thm:pairing}
If $(W,G)$ satisfy \autoref{property}, then the mirror map $\mmap:\BB_{W\tr,G\tr} \rightarrow  \HH_{W,G}$ preserves the pairing and therefore is an isomorphism of Frobenius algebras.
\end{proposition}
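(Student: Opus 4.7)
The plan is to reduce pairing preservation to a single scalar verification using the Frobenius structure. In any commutative Frobenius algebra with identity $\mathbf{1}$, the pairing is recovered from the counit $\varepsilon(x) := \langle x, \mathbf{1}\rangle$ via $\langle a, b\rangle = \varepsilon(ab)$. Since $\mmap$ has been shown to be an algebra isomorphism sending $\kn{1}{0}$ to $\ringid$, we have $\langle \mmap(a), \mmap(b)\rangle_A = \varepsilon_A(\mmap(ab))$, and pairing preservation reduces to proving $\varepsilon_A \circ \mmap = \varepsilon_B$ as linear functionals on $\BB_{W\tr, G\tr}$.

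Next I would reduce to the fundamental case. For a decoupled sum $W = \sum W_i$ with $G = \bigoplus G_i$, both $\BB$ and $\HH$ decompose as tensor products by \autoref{ax:sums} and \autoref{prop:b-sums}, with the pairings being the tensor products of factor pairings; moreover, $\mmap$ is by construction $\bigotimes \mmap_i$. For general $(W,G)$ satisfying \autoref{property}, any element splits nicely into a decoupled sum over a suitable partition $\II$, and by the same arguments underlying \autoref{cor:change-group-product} the pairing computed inside the subalgebra $B_{\II}$ coincides with the pairing computed in the full algebra. Thus it suffices to prove $\varepsilon_A \circ \mmap = \varepsilon_B$ in the fundamental case.

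For a fundamental pair $(W, G)$, both functionals are supported on a one-dimensional subspace. On the B-side, $\varepsilon_B(\kn{n}{h})$ vanishes unless $h = 0$ (since sectors pair only with their duals) and unless $n$ is the top-degree element of $\QQQ_{W\tr}$, by non-degeneracy of the residue pairing. On the A-side, $\varepsilon_A$ is supported on $\HH_{-J}$, which is narrow (hence one-dimensional, spanned by $\kn{1}{-J}$) by the fundamental assumption applied to $-J \neq 0$. Using \eqref{eq:mymm1} and \eqref{eq:mymm2} together with grading preservation of $\mmap$ (\autoref{mirrorwelldef}), I would verify that the only basis vector of $\BB$ whose $\mmap$-image lies in $\HH_{-J}$ is the top element $\kn{\Y^{\expontr - \one}}{0}$: images of twisted basis vectors $\kn{1}{g}$ land in $\HH_0 \neq \HH_{-J}$, while images of non-top untwisted vectors $\kn{\Y^{\bbeta}}{0}$ land, via the narrow-product rule \eqref{eq:narrow-mul}, in $\HH_{\sum \beta_j \rho_j + J}$, which by a sector calculation differs from $\HH_{-J}$.

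The remaining step is a scalar verification on $\kn{\Y^{\expontr - \one}}{0}$: iterate the algebra homomorphism on the generators $\mmap(\kn{Y_j}{0}) = \rcb^{w'_j}\kn{1}{\rho_j + J}$ to obtain $\mmap(\kn{\Y^{\expontr - \one}}{0}) = \rcb^{\bwb \cdot (\expontr - \one)}\kn{1}{-J}$; then apply $\varepsilon_A$ and match it to $\varepsilon_B$ on the top monomial via the defining equations \eqref{rescaledef-a}--\eqref{rescaledef-b} for $\rca, \rcb$. The main obstacle is precisely this computation: verifying the sector identity $[\sum (h'_j - 1)\rho_j + J] = -J$, which reduces atomic-type by atomic-type to the relation $\M^{-1}\M\tr \one \equiv \one \pmod{\ZZ^N}$, and then checking that the accumulated power of $\rcb$ matches the residue $\mu_{W\tr}\Y^{\expontr - \one}/\hess(W\tr)$ as built into the definition of $\rcb$. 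The two-dimensional ambiguity for even-variable loops noted in \autoref{rem:agrees} is harmless, since it affects only non-top sectors where both functionals vanish identically.
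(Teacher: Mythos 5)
Your argument is correct and reaches the conclusion by a genuinely different route than the paper. The paper checks the pairing pair-by-pair: it reduces to fundamental factors via the same nice splittings used for the homomorphism, kills mixed pairs with \autoref{lem:mixedpairing} (relying on the computations in \autoref{lem:vanish-as-well}), and then verifies the untwisted--untwisted and twisted--twisted cases by computations parallel to case (3) of \autoref{fundhomom}. You instead exploit the Frobenius property on both sides: since $\mmap$ is already a ring isomorphism taking $\kn{1}{0}$ to $\ringid$ (by \autoref{genhomom} and \autoref{mirrorwelldef}), the bilinear statement collapses to the linear identity $\varepsilon_A\circ\mmap=\varepsilon_B$, which after the tensor reduction is a one-dimensional check on the top class of each fundamental factor; the mixed-pair case disappears, its content having been absorbed into the homomorphism property. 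This buys real economy: the three-case analysis becomes a single scalar normalization, which is exactly what the constants in \eqref{rescaledef-a}--\eqref{rescaledef-b} encode, with the sector identification supplied by \autoref{lem:-J}. Two caveats. The ``sector calculation'' you defer---that no non-top invariant monomial $\kn{\Y^{\bbeta}}{0}$ can map into $\HH_{-J}$, i.e.\ that $[\M^{-1}(\bbeta+\mathbf 2)]=0$ forces $\Y^{\bbeta}$ to represent the Hessian class---is precisely where the appendix lemmas (\autoref{lem:srAn}, \autoref{lem:hess}, \autoref{cor:equiv}) are needed, so cite them rather than gesture at them. And your stated reduction to $\M^{-1}\M\tr\one\equiv\one \pmod{\ZZ^N}$ is not the right identity (it fails for chains); what you actually need is only \autoref{lem:-J}, namely $-J=[\M^{-1}\expontr]$ with $\expontr=\M\one-\one$, which holds unconditionally. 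Finally, state explicitly that $\left<a,b\right>=\varepsilon(a\cdot b)$ uses the Frobenius property of the FJRW pairing and of the Krawitz--Kaufmann pairing, each given independently of the mirror map, so no circularity arises.
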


\begin{proof}
Assume that $\kn{m}{g}$ and $\kn{m'}{g'}$ are elements of $\BB_{W\tr,G\tr}$.  We must check that 
\begin{equation}\label{pairingcheck}
\left<\mmap(\kn{m}{g}, \kn{m'}{g'})\right>_B = \left<\mmap(\kn{m}{g}), \mmap(\kn{m'}{g'})\right>_A.
\end{equation}
\begin{lem} \label{lem:mixedpairing}For any mixed pair, as described in the hypothesis of \autoref{lem:vanish-as-well},  both pairings in Equation~\eqref{pairingcheck} vanish.
\end{lem}

\begin{proof}
On the B-side this is immediate, since $g+g' \neq [0]$.  The form of their images on the A-side was computed explicitly in the proof of \autoref{lem:vanish-as-well}, and we saw in \eqref{eq:vaw1} that if the pairing were non-trivial, it would violate the hypothesis.  In \eqref{eq:vaw2}, we can see that the images do not come from inverse sectors.
\end{proof}
Now, by using the same partitions we used for the verification of the homomorphism property, we can reduce to the case of a fundamental pair $(W,G)$.  
Furthermore, it suffices to check for each factor in the tensor product.  We again have three cases to check: (1) $g=g'=0$,  (2)  $g=0$ but $g'\neq0$ and (3) $g\neq 0$ and $g'\neq0$.

Case (2) follows immediately from \autoref{lem:mixedpairing}.  Cases (1) and (3) are straightforward computations, very similar to the computations done in the proof of the third case of \autoref{fundhomom}.
\end{proof}

\section{Appendix: Some loop and chain lemmas}

In the following  proofs, unless otherwise indicated, we assume that $W$ is a atomic polynomial with variables ordered as in \autoref{sec:classification} and with exponent matrix $\M$. We take $\expon$ as defined in \autoref{lem:milnor}.  When discussing loop polynomials, we take the indices modulo $N$.

%
%
The first lemma follows immediately from the definitions.
\begin{lem} \label{lem:-J}
Let $J$ be the corresponding exponential grading element corresponding to $W$ (see \autoref{def:rho-and-J}).  
Then
\[
-J = \left[\M^{-1} \expontr\right] \in (\QQ/\ZZ)^N.
\]
\end{lem}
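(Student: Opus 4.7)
The plan is to unpack the definitions directly; the lemma is essentially a one-line algebraic identity in $(\QQ/\ZZ)^N$.

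First, I would recall from \autoref{def:rho-and-J} that $J = [\M^{-1}\one] = [\mathbf q]$, where $\M$ is the exponent matrix of $W$. Next, by \autoref{lem:milnor}, the vector $\mathbf h_P$ attached to any atomic polynomial $P$ with exponent matrix $\M_P$ is given by $\mathbf h_P = \M_P^{\mathsf T}\one - \one$. Applying this to $P = W\tr$, whose exponent matrix is $\M\tr$, yields
\[
\expontr \;=\; (\M\tr)\tr\one - \one \;=\; \M\one - \one.
\]

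The computation is then immediate:
\[
\M^{-1}\expontr \;=\; \M^{-1}(\M\one - \one) \;=\; \one - \M^{-1}\one \;=\; \one - \mathbf q.
\]
Passing to the class in $(\QQ/\ZZ)^N$ and using $\one \in \ZZ^N$, we get
\[
\bigl[\M^{-1}\expontr\bigr] \;=\; [-\mathbf q] \;=\; -J,
\]
as desired. There is no real obstacle here; the only mild subtlety is checking that $\expontr$ is being interpreted as the vector $\mathbf h_{W\tr}$ attached to the \emph{transpose} polynomial (so that its formula uses $(\M\tr)\tr = \M$), rather than a transpose of $\expon$. Once that convention is pinned down, the identity reduces to the trivial cancellation $\M^{-1}\M = \id$.
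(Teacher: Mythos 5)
Your computation is correct and is exactly what the paper intends: it states only that this lemma ``follows immediately from the definitions,'' and your unpacking—$\expontr = \M\one - \one$ for the transpose polynomial, hence $\M^{-1}\expontr = \one - \M^{-1}\one \equiv -J$ in $(\QQ/\ZZ)^N$—is that immediate verification, including the right reading of $\expontr$ as $\mathbf{h}_{W\tr}$ rather than a transposed $\expon$.
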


\begin{lem} \label{lem:hess}
If $\X^\bt \in \QQQ_{W}$ is a scalar multiple of the Hessian, then $[(\M\tr)^{-1}(\bt + \mathbf 2)] = 0$.
\end{lem}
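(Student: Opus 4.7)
The plan is to exploit the $\Gmax_W$-weight decomposition of $\QQQ_W$. Recall from \autoref{lem:milnor} that the Hessian is a scalar multiple of the top-degree monomial $\X^{\bt_0}$ with exponent $\bt_0 := \expon - \one = \M\tr\one - 2\one$. A direct computation gives
\[
    [(\M\tr)^{-1}(\bt_0 + 2\one)] = [(\M\tr)^{-1}\M\tr\one] = [\one] = 0 \in (\QQ/\ZZ)^N,
\]
so the claim holds for $\bt = \bt_0$. It remains to show that any other exponent vector $\bt$ for which $\X^\bt$ represents a (necessarily nonzero) scalar multiple of the Hessian in $\QQQ_W$ satisfies $\bt - \bt_0 \in \M\tr\ZZ^N$, which is immediately equivalent to the desired conclusion via the same computation shifted by $\bt - \bt_0$.

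For this reduction I would use that $\Gmax_W$ preserves $W$, and hence preserves $\operatorname{Jac}(W)$, so $\QQQ_W$ inherits a decomposition as a direct sum of $\Gmax_W$-weight spaces in which each monomial is a weight vector. Using the generators $\rho_i = \M^{-1}\mathbf e_i$ of \autoref{def:rho-and-J}, one checks that $\rho_i$ acts on $\X^\bt$ by multiplication by $\exp\bigl(2\pi i\,((\M\tr)^{-1}\bt)_i\bigr)$, so the $\Gmax_W$-character of $\X^\bt$ is exactly $[(\M\tr)^{-1}\bt]\in(\QQ/\ZZ)^N$. Two monomials that represent the same nonzero element of $\QQQ_W$ must then carry the same character, since the $\Gmax_W$-isotypic decomposition descends to the quotient. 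Applying this to $\X^\bt$ and $\X^{\bt_0}$ yields $[(\M\tr)^{-1}\bt] = [(\M\tr)^{-1}\bt_0]$, and combining with the first paragraph gives $[(\M\tr)^{-1}(\bt + 2\one)] = 0$.

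I do not anticipate a genuine obstacle—the argument is uniform across all three atomic types because it only uses the diagonal action of a finite abelian group on a graded quotient algebra. The only point that deserves a sentence of justification is that two monomials of different $\Gmax_W$-characters cannot be identified in $\QQQ_W$; this follows because $\operatorname{Jac}(W)$, being generated by the $\Gmax_W$-weight vectors $\partial W/\partial X_j$, is a $\Gmax_W$-stable ideal and therefore intersects every weight space cleanly. An alternative, more hands-on route (which I would keep in reserve) would track exponents directly through the relations $a_j X_j^{a_j-1}X_{j+1} \equiv -X_{j-1}^{a_{j-1}}$ on loops and chains and observe that each such move shifts the exponent vector by $\M\tr(\mathbf e_{j-1} - \mathbf e_j)\in \M\tr\ZZ^N$, but the character argument is cleaner and avoids atomic-type case analysis.
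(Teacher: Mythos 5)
Your proof is correct, but your main argument is genuinely different from the paper's. The paper's proof is exactly your ``reserve'' route: it settles the case $\bt = \expon - \one$ via \autoref{lem:-J} (equivalently the computation $\expon+\one=\M\tr\one$), and then says the general case follows by translating the Jacobian relations into operations on exponent vectors---each rewrite coming from $\partial W/\partial X_j$ shifts the exponent vector by an element of $\M\tr\ZZ^N$, leaving $[(\M\tr)^{-1}\bt]$ unchanged. You instead run the $\Gmax_W$-weight argument: $\operatorname{Jac}(W)$ is generated by semi-invariants, hence is $\Gmax_W$-stable, so $\QQQ_W$ inherits the weight decomposition in which $\X^\bt$ has character $[(\M\tr)^{-1}\bt]$ (your computation with the generators $\rho_i$ is right, since $\Gmax_W$ is generated by the columns of $\M^{-1}$), and two monomials representing the same nonzero class must share a character; comparing with $\X^{\expon-\one}$ then gives the claim, using the same base computation. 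What your route buys: it is uniform over the atomic types and it closes a point the paper's sketch leaves implicit, namely that equality of two monomials in the quotient need not come from a chain of single-monomial rewrites, whereas the stable-ideal/isotypic-projection argument needs no such chain. What the paper's route buys: it stays within the elementary exponent bookkeeping already used in \autoref{lem:srAn}, \autoref{lem:2a-2}, and \autoref{cor:equiv}, with no representation-theoretic input. One shared caveat worth a sentence in either write-up: the lemma must be read with a \emph{nonzero} scalar (as it is applied in \autoref{cor:equiv}); your parenthetical ``necessarily nonzero'' is the right reading rather than a consequence of the hypothesis, and your weight-space argument genuinely uses it, since a monomial lying in $\operatorname{Jac}(W)$ is a zero multiple of the Hessian with no constraint on its character.
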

\begin{proof}
If $\bt = \expon -\one$, then the result follows from \autoref{lem:-J}.  
The more general case is a straightforward computation once one translates the Jacobian relations into an operation on exponent vectors.
\end{proof}
%
\begin{lem} \label{lem:srAn}
Suppose that
$	\bv = \M\tr    \bn, $
where $\bn$ has integer entries and the entries of $\bv$ satisfy
$2 \le v_i \le 2a_i.  $
\begin{enumerate}
	\item\label{loop-delta} If $W$ is a loop type polynomial, then one of the following is true:
		\begin{enumerate}
			\item $\bv = \expon + \one.$
			\item $v_i = \delta_\text{odd}^i (2a_i-2)+2.$ \label{item:odd}
			\item $v_i = \delta_\text{even}^i (2a_i -2)+2.$ \label{item:even}
		\end{enumerate}
		The latter two cases can occur only if $N$ is even.
	\item If $W$ is a chain type polynomial, then
	there is some $s \in \{1, \ldots, N \}$ where $n_s$ is the 
	first coordinate of $\bn$ 
	equal to $1$, and 
\[
v_i = \left\{\begin{array}{l r} \delta_\text{odd}^i (2a_i -2) + 2	   & i<s \\
		a_s + 2\delta_\text{even}^s & i=s\\
	 	1+a_i& i>s\end{array}\right.
\]
\end{enumerate}
\end{lem}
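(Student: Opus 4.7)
The plan is to expand $\bv = \M\tr \bn$ componentwise. For loops this gives $v_i = n_{i-1} + a_i n_i$ with indices taken modulo $N$; for chains, $v_1 = a_1 n_1$ together with $v_i = n_{i-1} + a_i n_i$ for $i \ge 2$. Combined with $2 \le v_i \le 2a_i$, $\bn \in \ZZ^N$, and $a_i \ge 2$, these equations admit only a few explicit solutions, and the proof amounts to running the corresponding integer-interval analysis.

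For the loop case, I would first establish that $n_i \in \{0, 1, 2\}$ for every $i$. Let $M = \max_j |n_j|$ and suppose $M \ge 3$, attained at some index $i$. If $n_i = M$, then $v_i \le 2a_i$ forces $n_{i-1} \le a_i(2 - M) \le -(M-2)a_i$, and then $v_{i-1} \ge 2$ forces $n_{i-2} \ge 2 + a_{i-1}a_i(M - 2) \ge 4M - 6 > M$, contradicting maximality; the case $n_i = -M$ is symmetric. Once $|n_i| \le 2$ is known, $n_i \le -1$ is ruled out because it forces $n_{i-1} \ge 2 + a_i \ge 4$, contradicting the bound just proved. Next I would observe a \emph{contagion of ones}: if $n_j = 1$, then $v_{j+1} = 1 + a_{j+1} n_{j+1} \in [2, 2a_{j+1}]$ forces $n_{j+1} = 1$, the only integer in the open interval $(0, 2)$. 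Iterating around the loop yields either all $n_i = 1$, which gives $\bv = \expon + \one$, or no $n_j$ equals $1$. In the latter case $n_i \in \{0, 2\}$ and strict alternation is forced: $n_i = 0$ implies $v_{i+1} = a_{i+1} n_{i+1} \ge 2$, hence $n_{i+1} = 2$; while $n_i = 2$ implies $v_{i+1} = 2 + a_{i+1} n_{i+1} \le 2a_{i+1}$, hence $n_{i+1} = 0$. Cyclic consistency then demands $N$ even, and the two parity choices produce the remaining two loop cases.

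For the chain, the boundary equation $v_1 = a_1 n_1 \in [2, 2a_1]$ immediately confines $n_1 \in \{1, 2\}$. I would then propagate forward: at each $i$, either $n_i = 1$ (after which the contagion argument above forces $n_j = 1$ and $v_j = 1 + a_j$ for all $j > i$), or the alternation $n_1 = 2,\; n_2 = 0,\; n_3 = 2,\ldots$ continues by the same dichotomy as in the loop case. Taking $s$ to be the first index with $n_s = 1$, one reads off $v_i = 2 a_i$ for odd $i < s$ and $v_i = 2$ for even $i < s$, while at $i = s$ the value $v_s = n_{s-1} + a_s$ equals $a_s$ when $s$ is odd (with $n_{s-1} = 0$) and $a_s + 2$ when $s$ is even (with $n_{s-1} = 2$), reproducing the stated formula.

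The main obstacle is the global bound $n_i \in \{0, 1, 2\}$ in the loop case: the cascade argument needs careful sign bookkeeping to ensure the magnitude strictly grows at each step, and one must check that the inequality $4M - 6 > M$ really is strict for $M \ge 3$ so that maximality is violated rather than merely matched. Once this bound is in hand, every remaining step is a short integer-interval computation, and the chain case is simply the acyclic version of the same analysis, anchored by the endpoint relation $v_1 = a_1 n_1$ in place of cyclic consistency.
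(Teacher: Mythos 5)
Your proof is correct and follows essentially the same route as the paper: expand $\bv = \M\tr\bn$ componentwise, use the interval constraints to force the entries of $\bn$ into $\{0,1,2\}$ (your maximum-entry cascade is a more carefully bookkept version of the paper's ``repeated application of the inequality'' argument), and then obtain the loop patterns and the chain pattern by the contagion-of-ones versus $2,0$-alternation dichotomy, with cyclic consistency giving the parity restriction for loops. One caveat, which you share with the paper's own proof rather than introduce: in the chain case the step ``take $s$ to be the first index with $n_s=1$'' presupposes that such an index exists, but the alternating vector $\bn=(2,0,2,0,\dots)$ with no entry equal to $1$ also satisfies all the hypotheses (for instance, for a two-variable chain $\bn=(2,0)$ gives $\bv=(2a_1,2)$), so the stated classification misses this family unless the trailing block of $1$'s is allowed to be empty. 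That is an imprecision in the lemma as stated; apart from inheriting it, your propagation argument accounts for all solutions.
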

\begin{proof}
Consider first $W$ a loop. We have the set of inequalities
\begin{equation}
 2 \le a_i n_i + n_{i-1} \le 2a_i \label{ineq1}.
\end{equation}
If $n_j \le 0$ for some $j$, then we see that $n_{j-1} \ge 2$.
Then $a_{j-1}2 + n_{j-2} \le 2a_{j-1}$
which implies that $n_{j-2} \le 0$. Thus we see that if any $n_i$ is not 1, then the entries of the vector $\bn$ must alternate being less than or equal to zero and being greater than or equal to 2.  This is impossible if $N$ is odd.  

Suppose now that for some $j$, $n_j \ge m$, where $m \ge 3$ is an integer.  
Using the inequality  $a_jm + n_{j-1} \le 2a_i$ repeatedly, one shows that 
we can find an entry of $\bn$ larger than any natural number, a contradiction.  
Thus $n_i \le 2$ for all $i$, and it then follows that $n_i \ge 0$.  Thus,  either $\bn = \one$, $\bn = [2,0,2,0, \dots ,2,0]\tr$, or $\bn = [0,2,0,2, \dots, 0, 2]\tr$, and the latter two cases can occur only when $n$ is even.  Then the result follows from computing $\M\tr\bn$.

To prove the statement about chain polynomials, start by considering the possibilities for $n_1$, and it is straightforward to check that $\bn$ is of the form $[2,0, \dots , 2, 0, 1, \dots 1]$  or $ [2, 0, \dots, 0,2,1,\dots, 1]$, from which the result follows. \end{proof}


\begin{remark} \label{rem:1of3} 
\mbox{}
\begin{enumerate}
    \item\label{rem:loop1of3} Suppose that for a loop polynomial $0 \le r_i, s_i \le a_i-1$ (as in  \autoref{lem:write-sym-groups} \eqref{item:write-loop-group}) and $[(\M\tr)^{-1}(\brr + \bs + \mathbf 2)] = 0$. The vector $\bv = \brr + \bs + \mathbf 2$ satisfies the hypothesis of \autoref{lem:srAn} and one of the following is true:
        \begin{enumerate}
            \item $\brr + \bs = \expon - \one$
            \item $r_i = s_i = \delta_\text{odd}^i (a_i-1)$ \label{enum:odd}
            \item $r_i = s_i  = \delta_\text{even}^i (a_i -1)$ \label{enum:even}
        \end{enumerate}
    \item\label{rem:chain1of3} Suppose that for a chain polynomial $\brr$ and $\bs$ satisfy the Chain \autoref{chainproperty}
        and $[(\M\tr)^{-1}(\brr + \bs + \mathbf 2)] = 0$.  By Lemma \ref{lem:srAn} either $\brr+\bs = \expon -1$ or
        \[
            (\brr+\bs)_i = \left\{\begin{array}{l r}    \delta_\text{odd}^i (2a_i -2)& i<s \\
                    a_s - 2\delta_\text{odd}^s & i=s\\
                     a_i-1 & i>s 	\end{array}\right.
        \]
\end{enumerate}
\end{remark}


\begin{lem} \label{lem:2a-2}
If $W$ is a loop and has an even number of variables, then in $\QQQ_{W}$ we have
\[
	\prod_{i\text{ odd}} X_i^{2a_i - 2} = \prod_{i\text{ even}} (-a_{i}) \prod_{i=1}^N X_i^{a_i-1}
\dsand	\prod_{i\text{ even}} X_i^{2a_i - 2} = \prod_{i\text{ odd}} (-a_{i}) \prod_{i=1}^N X_i^{a_i-1}.
\]
(Thus these are also multiples of $\hess(W)$.)
\end{lem}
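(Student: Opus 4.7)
The plan is to prove both identities by direct manipulation of the Jacobian relations of the loop. Writing $W = \sum_i X_i^{a_i}X_{i+1}$ with indices taken modulo $N$, the partial derivative $\partial_{i+1} W = X_i^{a_i} + a_{i+1} X_{i+1}^{a_{i+1}-1} X_{i+2}$ lies in $\operatorname{Jac}(W)$, so in $\QQQ_W$ we have the fundamental substitution
\begin{equation*}
    X_i^{a_i} \equiv -a_{i+1}\, X_{i+1}^{a_{i+1}-1} X_{i+2}.
\end{equation*}
The key move is to notice that, since we assume $a_i \ge 2$, we may write $X_i^{2a_i-2} = X_i^{a_i-2}\cdot X_i^{a_i}$, which allows us to apply the substitution above to one copy of $X_i^{a_i-1}$ and leave a leftover factor of $X_i^{a_i-2}$ behind.

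For the first identity, I would apply this rewrite simultaneously to every odd index $i$. Each odd $i$ then contributes, modulo $\operatorname{Jac}(W)$, the scalar $-a_{i+1}$ together with the monomials $X_i^{a_i-2}$, $X_{i+1}^{a_{i+1}-1}$, and $X_{i+2}$. Because $N$ is even, the map $i \mapsto i+1$ is a bijection from the odd indices to the even indices, and $i \mapsto i+2$ is a bijection from odd indices to odd indices. Collecting terms, the scalars assemble into $\prod_{j\text{ even}}(-a_j)$, the $X_{i+1}^{a_{i+1}-1}$ factors assemble into $\prod_{j\text{ even}} X_j^{a_j-1}$, and combining the leftover $X_i^{a_i-2}$ factors with the shifted $X_{i+2}$ factors produces $\prod_{i\text{ odd}} X_i^{a_i-1}$. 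Putting these three pieces together yields precisely $\prod_{j\text{ even}}(-a_j)\prod_{i=1}^N X_i^{a_i-1}$, as required.

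The second identity is proved in exactly the same way, starting instead with every even $i$. This works because the cyclic shift $i \mapsto i+1$ is an automorphism of the loop structure (mapping even indices to odd and vice versa) when $N$ is even, so the role of odd and even is symmetric; alternatively, one can simply repeat the computation verbatim with odd and even interchanged.

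There is no real obstacle here beyond bookkeeping. The one point that requires care is the hypothesis $a_i \ge 2$ (guaranteed by the standing assumption in \autoref{sec:classification}), without which the rewrite $X_i^{2a_i-2} = X_i^{a_i-2}\cdot X_i^{a_i}$ would be illegal. The fact that $N$ is even is essential to ensure that the index shifts $i\mapsto i+1$ and $i\mapsto i+2$ close up correctly around the loop; if $N$ were odd, the odd and even index sets would not be preserved and the argument would fail. Since both $\prod_{i\text{ odd}} X_i^{2a_i-2}$ and $\prod_i X_i^{a_i-1}$ lie in the top-degree part of $\QQQ_W$, which is spanned by $\hess(W)$, the resulting equalities also express each as a scalar multiple of the Hessian, as noted in the parenthetical of the statement.
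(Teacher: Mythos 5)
Your proof is correct and follows essentially the same route as the paper, which simply cites the Jacobian relation $X_i^{a_i} = -a_{i+1}X_{i+1}^{a_{i+1}-1}X_{i+2}$ and its repeated application; your write-up just makes the bookkeeping (one substitution at each odd, respectively even, index, using $a_i \ge 2$ and the parity of $N$) explicit. The only blemish is the slip where you say the substitution is applied to ``one copy of $X_i^{a_i-1}$'' rather than $X_i^{a_i}$, but the surrounding computation makes the intent clear.
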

\begin{proof}
The Jacobian relations for a loop polynomial are $X_i^{a_i} = - a_{i+1}X_{i+1}^{a_{i+1}-1}X_{i+2}$.  Repeated application of this relation gives the result.
\end{proof}

\begin{cor} \label{cor:equiv} 
Suppose that $W = \sum W_i$ is a sum of Fermat, loop, and chain type polynomials with exponent matrix $\M$, and that $a_{i,j}$ are the defining exponents of $W_i$.
Suppose $\X^{\brr}$ and $\X^{\bs}$ are representatives of elements of $\QQQ_{W}$ with the exponent vectors $\brr$, $\bs$ satisfying the conditions of  \autoref{lem:write-sym-groups}).
Then the following are equivalent: 
\begin{enumerate}
	\item $[(\M\tr)^{-1}(\brr + \bs+ \mathbf 2)] = 0$ \label{first}
	\item $\X^{\brr + \bs}$ is a scalar multiple of the Hessian. \label{second}
\end{enumerate}
\end{cor}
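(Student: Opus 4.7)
The plan is to reduce to the atomic case via the decoupled sum structure, then handle the forward implication using \autoref{lem:hess} and the reverse implication via case analysis from \autoref{rem:1of3}.

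First, since $W=\sum W_i$ is a decoupled sum of atomic polynomials with disjoint variable sets, the exponent matrix $\M$ is block diagonal, and the Milnor ring factors as $\QQQ_W\cong\bigotimes_i\QQQ_{W_i}$. The Hessian of a block-diagonal matrix is the product of the block Hessians, and the given conditions on $\brr$ and $\bs$ from \autoref{lem:write-sym-groups} decompose block-by-block. Hence both (\ref{first}) and (\ref{second}) are equivalent to their analogues for each atomic block $W_i$, so it suffices to prove the equivalence when $W$ is a single Fermat, loop, or chain polynomial.

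For (\ref{second}) $\Rightarrow$ (\ref{first}): By \autoref{lem:milnor} the Hessian is a nonzero scalar multiple of $\X^{\expon-\one}$, so this direction is immediate from \autoref{lem:hess} applied with $\bt=\brr+\bs$.

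For (\ref{first}) $\Rightarrow$ (\ref{second}): Setting $\bv=\brr+\bs+\mathbf 2$, the constraints from \autoref{lem:write-sym-groups} guarantee $2\le v_i\le 2a_i$ in each atomic type, so the hypothesis $[(\M\tr)^{-1}\bv]=0$ places us in the setting of \autoref{lem:srAn} and \autoref{rem:1of3}. For Fermats, $2\le r+s+2\le 2a-2$ forces $r+s+2=a$, so $\X^{\brr+\bs}=\X^{\expon-\one}$, the top-degree monomial, which is a scalar multiple of the Hessian. For loops, either $\brr+\bs=\expon-\one$ (same conclusion) or we are in case \eqref{enum:odd} or \eqref{enum:even}, where $\X^{\brr+\bs}$ equals $\prod_{i\text{ odd}}X_i^{2a_i-2}$ or $\prod_{i\text{ even}}X_i^{2a_i-2}$, and \autoref{lem:2a-2} identifies these as scalar multiples of $\prod_i X_i^{a_i-1}=\X^{\expon-\one}$, hence of the Hessian.

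The one case requiring a small calculation is the chain alternative in \autoref{rem:1of3}\eqref{rem:chain1of3}. There, for the distinguished index $s$, the exponent vector $\brr+\bs$ has $(\brr+\bs)_i=2a_i-2$ at odd $i<s$, zero at even $i<s$, $a_s-2\delta_\text{odd}^s$ at $i=s$, and $a_i-1$ for $i>s$. Using the Jacobian relation $X_i^{a_i}=-a_{i+1}X_{i+1}^{a_{i+1}-1}X_{i+2}$ (coming from $\partial W/\partial X_{i+1}$) once at each odd $i<s$ converts each excess factor $X_i^{a_i}$ into $-a_{i+1}X_{i+1}^{a_{i+1}-1}X_{i+2}$, which successively cancels the zero exponent at $i+1$ (even) and bumps the exponent at $i+2$ (odd) up by one. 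Iterating through the odd indices $i<s$ in increasing order, one checks that $\X^{\brr+\bs}$ reduces to $\bigl(\prod_{i\text{ odd},\,i<s}(-a_{i+1})\bigr)\X^{\expon-\one}$, again a nonzero scalar multiple of the Hessian. The main obstacle is simply bookkeeping in this chain reduction; once verified, the equivalence follows in all atomic types and thus, by the tensor-product reduction, in general.
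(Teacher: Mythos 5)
Your overall route is the paper's: reduce to atomic types, get (\ref{second})$\Rightarrow$(\ref{first}) from \autoref{lem:hess}, and get (\ref{first})$\Rightarrow$(\ref{second}) from \autoref{rem:1of3} together with \autoref{lem:2a-2} for loops and a Jacobian-relation reduction for chains. The Fermat and loop cases are fine. The gap is in the chain case: your claim that the alternative of \autoref{rem:1of3}\eqref{rem:chain1of3} always reduces to a \emph{nonzero} multiple of $\X^{\expon-\one}$ is false when the distinguished index $s$ is even, and you never separate the two parities. If $s$ is even, the exponent at slot $s$ is $a_s$ (not $a_s-2$), so at the last step $i=s-1$ your relation does not ``cancel a zero exponent at $i+1$''; it bumps slot $s+1$ to $a_{s+1}$, and the reduction never lands on $\X^{\expon-\one}$. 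Indeed, writing $\brr+\bs+\mathbf 2=\M\tr\bn$, one has $\sum_i n_i>N$ in the even-$s$ case, so $\X^{\brr+\bs}$ has weighted degree strictly greater than $\hat c$ and is therefore $0$ in $\QQQ_W$ --- not a nonzero multiple of the Hessian, which is what the corollary is used for (nonvanishing of the A-side pairing).

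The statement survives because the even-$s$ alternative cannot occur under the hypotheses, and this is where the Chain \autoref{chainproperty} (which your reverse implication never invokes) must enter: in that case $r_s+s_s=a_s$ with $r_i=s_i=\dodd^i(a_i-1)$ for all $i<s$, and since $r_s,s_s\le a_s-1$ both must be $\ge 1$, so the first index at which $\brr$ (and likewise $\bs$) deviates from the pattern $\dodd^i(a_i-1)$ is the even index $s$, contradicting the Chain Property. This is exactly the extra step in the paper's proof, where the even-$s$ reduction is shown to produce a monomial divisible by $X_1^{a_1}X_2^{a_2}$ (hence zero) and the case is then excluded because at least one of $\brr,\bs$ would violate the Chain Property. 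Your odd-$s$ computation (with constant $\prod_{i\text{ even},\,i<s}(-a_i)$) is correct and matches the paper; you need to add the parity split and the Chain Property argument to close the even-$s$ case.
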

\begin{proof}
It suffices to prove this for atomic types.  For the Fermat type, it is obvious.
For the loop and chain types, \eqref{second} implies \eqref{first} is the content of \autoref{lem:hess}.
To see that \eqref{first} implies \eqref{second}, first use  \autoref{rem:1of3}.  Then, for loops, apply \autoref{lem:2a-2}, if necessary, to show that $\X^{\brr + \bs}$ is a scalar multiple of $\X^{\expon-\one}$.
For chains, if $\brr + \bs \neq \expon -1$, and $s$ is odd, then apply the Jacobian relation 
$X_i^{a_i} = -a_{i+1}X_{i+1}^{a_{i+1}-1}X_{i+2}$ for $i = s-2, s-4, \dots, 1$ to show that 
$\X^{\brr + \bs} = \prod_{\substack{i\text{ even} \\ i < s}}(-a_i) \X^{\expon-\one}.$
If $s$ is even, apply the Jacobian relation $X_i^{a_i-1}X_{i+1} = -\frac{1}{a_{i}}X_{i-1}^{a_{i-1}}$ for $i = s-1, s-3, \dots, 3$ to show that 
$\X^{\brr + \bs} = \prod_{\substack{ i\text{ odd} \\ i < s}}(-\frac{1}{a_i}) \X^{\expon-\one}\cdot X_1^{a_1}X_2^{a_2} = 0.$
Either way, at least one of $\brr$ or $\bs$ violates the Chain \autoref{chainproperty}.
\end{proof}


\bibliographystyle{amsalpha}

\bibliography{references2}

\end{document}